\numberwithin{equation}{section}
\title{Liouville theorems for anisotropic $p$-Laplace equations with a semilinear term\footnotemark[2]}
\author{Weizhao Liang \and Tian Wu\footnotemark[1] \and Jin Yan}
\begin{document}

\date{}
\maketitle

\renewcommand{\thefootnote}{\fnsymbol{footnote}}

\footnotetext[1]{The corresponding author.}
\footnotetext[2]{Authors were funded by Anhui Postdoctoral Scientific Research Program Foundation (No. 2025B1055). Welcome to contact us:
\begin{itemize}
    \item Weizhao Liang, School of Mathematical Sciences, University of Science and Technology of China, Hefei, Anhui, 230026, People's Republic of China. Email: Lwz740@mail.ustc.edu.cn
    \item Tian Wu, School of Mathematical Sciences, University of Science and Technology of China, Hefei, Anhui, 230026, People's Republic of China. Email: wt1997@ustc.edu.cn
    \item Jin Yan,  School of Mathematical Sciences, University of Science and Technology of China, Hefei, Anhui, 230026, People's Republic of China. Email: yjoracle@mail.ustc.edu.cn
\end{itemize}}

\begin{abstract}

    In this paper, we investigate Liouville theorems for solutions to the anisotropic $p$-Laplace equation
    $$-\Delta_p^H u=-\operatorname{div}(a(\nabla u))=f(u),\quad\text{in }\mathbb{R}^n,$$
    where the semilinear term $f$ may be positive, negative, or sign-changing. When $f$ is positive (negative) and satisfies certain conditions, Serrin's technique is applied to show that every positive supersolution (subsolution) must be constant. For the subcritical case, we use the invariant tensor method to prove nonexistence results for positive solutions. In particular, by applying the differential identity established in the subcritical case to the critical case, we provide a simplified new proof of the classification of positive solutions to the critical case $f(u)=u^{p^*-1}$. For sign-changing solutions, every stable solution or solution that is stable outside a compact set is trivial under certain conditions on $f$.
    \keywords{anisotropic $p$-Laplace equations, Liouville theorems, stable solutions, the invariant tensor technique.}\\
    \textbf{2020 Mathematics Subject Classification: }35J92, 35B08, 35B53.
\end{abstract}

\tableofcontents
\section{Introduction}\label{sec:introduction}
\setcounter{equation}{0}

Let $n\geqslant 2$ and $n\in\mathbb N^*$. Let $H\in C^2(\mathbb R^n\backslash\{0\})\cap C(\mathbb R^n)$ be a positively homogeneous function of degree one, which is positive on $\mathbb{S}^{n-1}$. By homogeneity, we see that $H$ can be naturally extended to a continuous function in $\mathbb R^n$ by setting $H(0)=0$. We also assume $H$ is uniformly convex, that is, the unit ball $B_1^H=\{\xi\in\mathbb R^n\mid H(\xi)<1\}$ is uniformly convex. Let $H_0$ be the dual function of $H$ defined by $H_{0}(x):=\sup\limits_{H(\xi)=1}x\cdot\xi$, which is also positively homogeneous of degree one. Set $\hat H(x)=H(-x)$, $\hat H_{0}(x)=H_{0}(-x)$, $\nabla_{\xi}H=(\nabla_{\xi_1}H,\cdots,\nabla_{\xi_n}H)$ and $\nabla H_{0}=(\nabla_{x_1}H_0,\cdots,\nabla_{x_n}H_0)$, with second derivatives defined similarly.

For $p>1$, we study the following anisotropic $p$-Laplace equation
\begin{equation}\label{eq:anisotropic} 
    -\Delta_p^H u=-\operatorname{div}(a(\nabla u))=f(u),\quad\text{in }\mathbb R^n,
\end{equation}
where $a(\xi):=\frac{1}{p}\nabla_{\xi} H^p(\xi)$ and $f$ is sufficiently smooth. When $H$ is the Euclidean norm, the anisotropic $p$-Laplacian $\Delta_p^H$ reduces to the classical $p$-Laplacian $\Delta_p$. Denote $p_*=\frac{(n-1)p}{n-p}$ and $p^*=\frac{np}{n-p}$.

For the classical $p$-Laplacian $\Delta_p$, there are many results concerning the classification of positive solutions to the equation
\begin{equation}\label{eq:classical-p-Laplacian}
    -\Delta_p u=u^{\alpha},\quad\text{in }\mathbb R^n.
\end{equation}
Gidas-Spruck \cite{GS1981} proved that for $p=2$ and $1<\alpha<\frac{n+2}{n-2}$ (the subcritical case), every nonnegative solution to \eqref{eq:classical-p-Laplacian} must be trivial, that is $u\equiv 0$. Caffarelli-Gidas-Spruck \cite{CGS1989} classified positive solutions for $p=2$ and $\alpha=\frac{n+2}{n-2}$ (the critical case) using the moving plane method. Subsequently, Chen-Li \cite{CL1991} provided a simpler and more elementary proof of the same results using the moving plane method.

While the method of moving planes has played a central role in classifications of positive solutions to semilinear elliptic equations, it relies heavily on the maximum principle and certain symmetry properties of the underlying space. In situations where these tools are not directly applicable, alternative methods based on vector fields and differential identities have been developed to derive classification and rigidity results.

The method of differential identities originates from the work of Obata \cite{Obata1971}, where he used a Bochner-type formula to derive a differential identity satisfied by positive solutions to the Yamabe equation on closed manifolds:
$$(v^{1-n}E_{ij}v^j),^{~i}=v^{1-n}\sum_{i,j=1}^n |E_{ij}|^2+\mathcal{R},$$
where $\mathcal{R}\geqslant 0$, leading to the classification and rigidity results for solutions to the Yamabe equation.

For the case $1<p<n$, Serrin-Zou \cite{Serrin-Zou2002} established the corresponding differential identities and proved that the equation \eqref{eq:classical-p-Laplacian} admits no positive solutions when $p-1<\alpha<p^*-1$. Moreover, they showed that if there exist constants $p-1<\beta\leqslant\alpha<p^*-1$ such that the semilinear term $f$ satisfies
$$uf'(u)\leqslant \alpha f(u),\quad\forall u>0\quad\text{and}\quad f(u)\geqslant u^{\beta},\quad\text{for sufficiently large }u,$$
then every nonnegative solution to $-\Delta_p u=f(u)$ in $\mathbb R^n$ must be trivial.

For the critical case $\alpha=p^*-1$ with $p\ne 2$, under the finite energy condition
$$
u\in D^{1,p}(\mathbb R^n):= \left\{u\in L^{p^*}(\mathbb R^n)\mid\nabla u\in L^p(\mathbb R^n)\right\},$$
the following classification results for positive solutions to \eqref{eq:classical-p-Laplacian} have been established: Damascelli-Merch\'an-Montoro-Sciunzi \cite{Damascelli-2014} for $1<\frac{2n}{n+2}\leqslant p<2$, V\'etois \cite{Vetois-2016} for $1<p<2$, and Sciunzi \cite{Sciunzi-2016} for $p>2$. In all cases, positive solutions are radially symmetric and take the explicit form
\begin{equation}\label{sol:critical}
    u=u_{\mu,x_0}(x):=\left(\frac{n^{\frac{1}{p}}\left(\frac{n-p}{p-1}\right)^{\frac{p-1}{p}}\mu^{\frac{1}{p-1}}}{\mu^{\frac{p}{p-1}}+|x-x_0|^{\frac{p}{p-1}}}\right)^{\frac{n-p}{p}},\quad \mu>0,~x_0\in\mathbb R^n.
\end{equation}
When the finite energy condition is removed, the problem is still completely open. Recently, Catino-Monticelli-Roncoroni \cite{Catino-Monticelli-Roncoroni-2023} use the method of differential identities to obtain a result in dimensions $n=2,3$ for $\frac{n}{2}<p<2$. Using the same method, Ou \cite{ou2025classification} and V\'etois \cite{Vetois2024} showed that, for $\frac{n+1}{3}<p<n$ and $p_n<p<n$, respectively, every positive solution to \eqref{eq:classical-p-Laplacian} must also take the form \eqref{sol:critical}, where
$$p_n:=\begin{cases}
    \frac{8}{5},&\text{if }n=4,\\[8pt]
    \frac{4n+3-\sqrt{4n^2+12n-15}}{6},&\text{if }n\geqslant 5.
\end{cases}$$
Subsequently, for $p_n<p<n$, Sun-Wang \cite{Sun-Wang-2025} provided a classification of positive solutions to \eqref{eq:classical-p-Laplacian} in complete, connected and noncompact Riemannian manifold of dimension $n$ with nonnegative Ricci curvature, where
$$p_n:=\begin{cases}
    \frac{n^2}{3n-2},&\text{if }2\leqslant n\leqslant 4,\\[8pt]
    \frac{n^2+2}{3n},&\text{if }n\geqslant 5.
\end{cases}$$

In the setting of CR geometry, Jerison-Lee \cite{Jerison-Lee-1988} employed computer-assisted calculations to derive a differential identity and proved a classification theorem for finite energy solutions to the equation (with $\alpha = \frac{n+2}{n}$):
$$-\Delta_{b}u=u^{\alpha},\quad\text{in }\mathbb{H}^n.$$
Ma-Ou \cite{Ma-Ou-2023} generalized Jerison-Lee's identity to obtain a Liouville result for the subcritical case $1<\alpha<\frac{n+2}{n}$. In \cite{Ma-Wu-2024}, Ma and Wu, the second author of this article, first introduced the invariant tensor
$$E_{ij}=u_{ij}+c\frac{u_i u_j}{u}-\frac{1}{n}\left(\Delta u+c\frac{|\nabla u|^2}{u}\right)g_{ij},$$
and, in combination with differential identities, provided simplified proofs of both Gidas-Spruck's result and Ma-Ou's result. Furthermore, by using the invariant tensor technique, Ma-Ou-Wu \cite{Ma-Ou-Wu-2025} addressed a question raised by Jerison-Lee regarding whether there exists a theoretical framework that can predict the existence and structure of such formulae. Besides, Zhu \cite{zhu2024} first extended the invariant tensor technique to the classical $p$-Laplacian case. 

Now, we turn our attention to the anisotropic equation \eqref{eq:anisotropic} and introduce the definitions of solutions, supersolutions, and subsolutions in this setting.

\begin{definition}
    We say that $u$ is a supersolution (subsolution) of \eqref{eq:anisotropic} if $u\in W^{1,p}_{\mathrm{loc}}(\mathbb R^n)\cap C(\mathbb R^n)$ satisfies
    \begin{equation}\label{weak-supsol}
        \int_{\mathbb R^n}a(\nabla u)\cdot\nabla\varphi\geqslant(\leqslant)\int_{\mathbb R^n} f(u)\varphi,\quad\forall\varphi\in C_c^\infty(\mathbb R^n),~\varphi\geqslant 0.
    \end{equation}
    We say that $u$ is a solution to \eqref{eq:anisotropic} if $u\in W^{1,p}_{\mathrm{loc}}(\mathbb R^n)\cap L^\infty_{\mathrm{loc}}(\mathbb R^n)$ satisfies
    \begin{equation}\label{weak-sol}
        \int_{\mathbb R^n}a(\nabla u)\cdot\nabla\varphi=\int_{\mathbb R^n}f(u)\varphi,\quad\forall\varphi\in C_c^\infty(\mathbb R^n).
    \end{equation}
\end{definition}

\begin{remark}
    By a standard approximation argument, the test function $\varphi$ can be taken from $W^{1,p}_{\operatorname{loc}}(\mathbb R^n)$ with compact support.
\end{remark}

Our first theorem is as follows:

\begin{theorem}\label{thm:Serrin-1}
    Let $1<p<n$. If $f:\mathbb{R}_+\to\mathbb{R}_+$ is continuous and satisfies 
    $$\int_0^1[f(v)]^{-\frac{1}{p_*-1}}\mathrm{d}v<+\infty,$$ then the equation \eqref{eq:anisotropic} admits no positive supersolution.
\end{theorem}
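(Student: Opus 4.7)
The plan is to adapt the classical Serrin-type test-function argument for positive supersolutions of $-\Delta_p u\ge f(u)$ to the anisotropic operator $-\Delta_p^H$. Argue by contradiction and suppose $u$ is a positive supersolution of \eqref{eq:anisotropic} on $\mathbb R^n$. The two structural facts about the operator that will be used repeatedly are the homogeneity identity $a(\nabla u)\cdot\nabla u = H^p(\nabla u)$ together with the anisotropic duality bound $|a(\nabla u)\cdot\nabla\eta|\le H^{p-1}(\nabla u)\,H(\nabla\eta)$, both immediate from $a=\tfrac1p\nabla_\xi H^p$. The second geometric ingredient is the replacement of Euclidean balls by Wulff balls $B_R^{H_0}:=\{H_0<R\}$: for a cutoff $\eta(x)=\chi(H_0(x)/R)$, the $1$-homogeneity and Lipschitz continuity of $H_0$ yield the natural scaling $H(\nabla\eta)\le C/R$.

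Introduce the primitive
$$F(t) := \int_0^t [f(s)]^{-1/(p_*-1)}\,\mathrm ds,$$
finite and continuous on $[0,1]$ by the hypothesis of the theorem, and test \eqref{weak-supsol} against $\varphi = \Phi_\epsilon(F(u))\,\eta^p$, where $\Phi_\epsilon$ is a bounded decreasing function of its argument (e.g.\ $\Phi_\epsilon(s)=(\epsilon+s)^{-\gamma}$ for a suitable $\gamma>0$) and $\epsilon>0$ is a regularization parameter. Expanding the resulting identity and applying Young's inequality in the conjugate pair $\bigl(p,\tfrac{p}{p-1}\bigr)$ to absorb the term $H^p(\nabla u)$ into the left-hand side should yield a Caccioppoli-type estimate of the form
$$\int f(u)\,\Phi_\epsilon(F(u))\,\eta^p \;\le\; C\int H(\nabla\eta)^p\,\Theta_\epsilon(u),$$
where the weight $\Theta_\epsilon$ is again expressible through $F$. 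The exponent $1/(p_*-1)$ in the definition of $F$ is tuned precisely so that, after a final H\"older step on the right-hand side, the resulting inequality reads $\int_{B_R^{H_0}} f(u)\,\Phi_\epsilon(F(u))\le C R^{-\alpha}$ for some $\alpha>0$, with the geometric factor coming from $|B_{2R}^{H_0}|\cdot\|H(\nabla\eta)\|_\infty^p\sim R^{n-p}$ balanced against the weight.

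Passing first $\epsilon\to 0^+$ (allowed because $F(0)=0$ and $F$ is continuous at $0$) and then $R\to\infty$ forces $f(u)\,\Phi_0(F(u))\equiv 0$, which together with the positivity of $f$ and $u$ produces the required contradiction.

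\textbf{Main obstacle.} The decisive point is the arithmetic of the exponents: $p_*$ arises as the unique value for which the Young--H\"older manipulations close back to the left-hand side while yielding a strictly negative power of $R$ on the right, and this is what singles out the precise form of the integral hypothesis $\int_0^1[f(v)]^{-1/(p_*-1)}\mathrm dv<\infty$. A secondary technicality is the admissibility of $\Phi_\epsilon(F(u))\eta^p$ as a test function: since it is not in $C_c^\infty$ but only of Sobolev regularity with compact support, one uses the density remark after the definition together with a truncation of $u$ at a large level, followed by a monotone passage to the limit.
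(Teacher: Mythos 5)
There is a genuine gap at the heart of your plan: the claimed Caccioppoli estimate $\int_{B_R} f(u)\,\Phi_\epsilon(F(u))\le C R^{-\alpha}$ with a strictly positive $\alpha$ is not obtainable under the stated hypothesis. The exponent $\frac{1}{p_*-1}$ is not tuned to produce a negative power of $R$; it is exactly the borderline at which all powers cancel. Concretely, in the paper's computation one tests with $\beta(u)\eta^n$ where $\tilde\beta(u)=[\beta(u)]^{-\frac{p}{(p-1)n}}=\int_0^u[f(v)]^{-\frac{1}{p_*-1}}\mathrm dv$ (essentially your $F$), applies a \emph{three-factor} Young inequality with exponents $\bigl(\tfrac{np}{n-p},\tfrac{p}{p-1},n\bigr)$ so that the boundary term is split among the $f$-term, the gradient term, and a pure cutoff term, and then the $f$-powers in the error cancel identically: the remainder is $C\int_{B_{2R}\setminus B_R}|\nabla\eta|^n\sim C R^{-n}\cdot R^n=O(1)$, independent of $R$ but \emph{not} decaying. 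This is unavoidable: the hypothesis allows $f$ only logarithmically above the critical profile (e.g.\ $f(v)=v^{p_*-1}(1+|\log v|)^{(1+\delta)(p_*-1)}$ near $0$ satisfies $\int_0^1 f^{-1/(p_*-1)}<\infty$), and since positive supersolutions do exist for $f(v)=v^{p_*-1}$ itself, no uniform margin $\alpha>0$ can be extracted by power counting from the integral condition alone. Your proposal, ending with "let $R\to\infty$ in $CR^{-\alpha}$", therefore does not close; this decay would only be available under a strictly subcritical pointwise bound such as $f(v)\geqslant c v^q$ with $q<p_*-1$.

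The missing idea is the second stage that handles the borderline. From the $O(1)$ bound one first concludes that $\int\beta(u)f(u)+\int(-\beta'(u))H^p(\nabla u)$ is finite over all of $\mathbb R^n$, hence its contribution over the annuli $B_{2R}\setminus B_R$ tends to $0$; one then reruns the Young step keeping the absorption constant $A$ arbitrary (absorbing the boundary term by $A$ times the annulus pieces of the left-hand side plus $CA^{1-n}$ times the cutoff term), lets $R\to\infty$ to kill the absorbed annulus pieces, and finally lets $A\to\infty$ to force $\int\beta(u)f(u)=0$, contradicting $f,u>0$. A secondary defect: absorbing only $H^p(\nabla u)$ via the conjugate pair $\bigl(p,\tfrac{p}{p-1}\bigr)$ leaves an error carrying positive powers of $f(u)$ and of $F(u)$ that are not a priori controlled (no upper bound on $u$ is available, and the hypothesis only constrains $f$ near $0$); you must simultaneously absorb into the $f\,\Phi$-term, which is exactly what the three-factor Young (or an equivalent H\"older against the $f$-term) accomplishes. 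Your "final H\"older step" gestures at this, but even after it the $R$-power is zero, so the two-stage limiting argument in $A$ is still indispensable.
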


\begin{remark}
    By Corollary \ref{cor:p>=n}, the equation \eqref{eq:anisotropic} admits no positive solutions if $p\geqslant n$ and $f\geqslant 0$.
\end{remark}

As a consequence of Theorem \ref{thm:Serrin-1}, the following Liouville result holds if $\liminf_{x\to0^+}f(x)>0$.

\begin{corollary}
    Let $1<p<n$. If $f:\mathbb{R}_+\to\mathbb{R}_+$ is continuous and satisfies $\liminf_{x\to0^+}f(x)>0$, then the equation \eqref{eq:anisotropic} admits no positive supersolutions.
\end{corollary}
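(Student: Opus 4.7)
The strategy is to deduce the corollary directly from Theorem \ref{thm:Serrin-1}. The only thing to verify is that the asymptotic lower bound $\liminf_{x\to 0^+} f(x) > 0$ is strong enough to force the integrability condition
$$\int_0^1 [f(v)]^{-\frac{1}{p_*-1}}\,\mathrm{d}v < +\infty.$$

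I would first check that the exponent is meaningful: since $1 < p < n$, a short algebraic computation gives $p_* = \frac{(n-1)p}{n-p} > 1$, so $p_* - 1 > 0$ and the integrand is a well-defined positive quantity wherever $f > 0$. Next, writing $L := \liminf_{x\to 0^+} f(x) > 0$, the definition of the liminf yields some $\delta \in (0, 1)$ with $f(v) \geq L/2$ for every $v \in (0, \delta)$. Hence on $(0, \delta)$ one has the uniform bound $[f(v)]^{-1/(p_*-1)} \leq (L/2)^{-1/(p_*-1)}$, contributing at most $\delta \cdot (L/2)^{-1/(p_*-1)} < \infty$ to the integral. On the compact subinterval $[\delta, 1]$, the function $f$ is continuous and strictly positive, so it attains a positive minimum $m$ and the integrand is bounded there by $m^{-1/(p_*-1)}$.

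Adding the two pieces yields the required finiteness, at which point Theorem \ref{thm:Serrin-1} immediately excludes positive supersolutions of \eqref{eq:anisotropic}. The argument is essentially bookkeeping and has no real analytic obstacle; the only point where a moment of care is needed is in ensuring that the convention $f:\mathbb{R}_+\to\mathbb{R}_+$ is interpreted so that $f$ is strictly positive on $(0,\infty)$, since without this the continuity alone would not guarantee a positive minimum on $[\delta,1]$ and the compact-interval estimate could fail.
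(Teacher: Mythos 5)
Your deduction is correct and is exactly the intended route: the paper states this corollary as an immediate consequence of Theorem \ref{thm:Serrin-1}, and your verification that $\liminf_{x\to0^+}f(x)>0$ together with continuity and positivity of $f$ on $[\delta,1]$ bounds the integrand $[f(v)]^{-\frac{1}{p_*-1}}$ (with $p_*-1>0$ since $1<p<n$) is precisely the bookkeeping needed to invoke the theorem.
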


For the special case $f(u)\geqslant cu^\alpha$, $c>0$, Theorem \ref{thm:Serrin-1} recovers the classical Serrin exponent result.

\begin{corollary}
If $1<p<n$ and $\alpha<p_*-1$, then $-\Delta_p^Hu=u^\alpha$ has no positive supersolutions. 
\end{corollary}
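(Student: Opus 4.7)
The plan is to apply Theorem \ref{thm:Serrin-1} directly with $f(u)=u^{\alpha}$. On $\mathbb R_+=(0,\infty)$, the map $u\mapsto u^{\alpha}$ is continuous and positive for every real $\alpha$, so the only nontrivial step is to verify the integrability hypothesis
\[
\int_0^1[f(v)]^{-\frac{1}{p_*-1}}\,\mathrm{d}v<+\infty.
\]

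Substituting $f(v)=v^{\alpha}$ turns the integrand into $v^{-\alpha/(p_*-1)}$, so the integral converges near $0$ if and only if $\alpha/(p_*-1)<1$. To rewrite this as the stated hypothesis $\alpha<p_*-1$, I first check that $p_*-1>0$: from $1<p<n$ one has
\[
p_*=\frac{(n-1)p}{n-p}>1 \iff (n-1)p>n-p \iff np>n \iff p>1,
\]
so indeed $p_*>1$. Therefore the integrability condition is equivalent to $\alpha<p_*-1$, which is exactly what is assumed. Note that the case $\alpha\leqslant 0$ needs no separate treatment: the integrand is then bounded on $(0,1]$, and the condition $\alpha<p_*-1$ holds automatically.

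Once this hypothesis is verified, Theorem \ref{thm:Serrin-1} immediately yields the nonexistence of positive supersolutions. The corollary therefore has no genuine obstacle of its own: all the analytic machinery (the Serrin-type capacity/test-function argument adapted to the anisotropic $p$-Laplacian) is bundled inside Theorem \ref{thm:Serrin-1}, and the present proof is a bookkeeping check that the power nonlinearity $u^{\alpha}$ with $\alpha<p_*-1$ falls within its scope, recovering the classical Serrin exponent $p_*-1$ in the anisotropic setting.
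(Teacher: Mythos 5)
Your proof is correct and follows exactly the route the paper intends: the corollary is a direct application of Theorem \ref{thm:Serrin-1} with $f(v)=v^{\alpha}$, where the only check is that $\int_0^1 v^{-\alpha/(p_*-1)}\,\mathrm{d}v<\infty$ precisely when $\alpha<p_*-1$ (using $p_*-1>0$ for $1<p<n$). Your handling of the case $\alpha\leqslant 0$ is a harmless extra verification; nothing further is needed.
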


For the case $f<0$, we also have a Liouville theorem.

\begin{theorem}\label{thm:Serrin-2}
    Let $p>1$, and let $f:\mathbb{R}_+\to\mathbb{R}_-$ be continuous. Suppose one of the following conditions holds:
    
    (1) For $p<n$, there exists $q\in(p-1,p_*-1)$ such that $\int_1^{+\infty}[-f(v)]^{-\frac{1}{q}}\mathrm{d}v<\infty$;
    
    (2) For $p\geqslant n$, there exists $q>p-1$ such that $\int_1^{+\infty}[-f(v)]^{-\frac{1}{q}}\mathrm{d}v<\infty$.

    \noindent Then the equation \eqref{eq:anisotropic} admits no positive subsolutions. 
\end{theorem}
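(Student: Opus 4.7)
The plan is to mirror the Serrin-style test-function argument that proves Theorem~\ref{thm:Serrin-1}, dualised to the subsolution setting where the condition lives at infinity rather than near zero. Writing $g:=-f>0$, assume for contradiction that a positive subsolution $u$ exists; the weak inequality rearranges to
$$\int_{\mathbb R^n}a(\nabla u)\cdot\nabla\varphi+\int_{\mathbb R^n}g(u)\varphi\leq 0,\qquad \varphi\in C_c^\infty(\mathbb R^n),\,\varphi\geq 0.$$
I will test with $\varphi=u^{\alpha}\eta^{p}$, where $\alpha>0$ is a parameter to be tuned and $\eta(x)=\eta_0(|x|/R)^{s}$ is a radial cutoff built from a standard bump $\eta_0\in C_c^\infty$ (equal to $1$ on $B_1$, supported in $B_2$) with integer power $s\geq 1+1/q$ chosen so that the negative powers of $\eta$ that appear after Hölder remain pointwise bounded.

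Expanding $\nabla\varphi$ and using the structural identities $a(\xi)\cdot\xi=H^p(\xi)$ (Euler's identity for the degree-one homogeneous $H$) together with $|a(\xi)\cdot v|\leq CH^{p-1}(\xi)|v|$ (the degree-zero homogeneity and boundedness of $\nabla H$, which are consequences of the uniform convexity and smoothness of $H$), followed by Young's inequality with exponents $p$ and $p/(p-1)$ to absorb the gradient cross-term, reduces the inequality to the Caccioppoli-type estimate
$$\int g(u)u^{\alpha}\eta^{p}\leq C\int u^{\alpha+p-1}|\nabla\eta|^{p}.$$
Next, Hölder's inequality with conjugate exponents $q+1$ and $(q+1)/q$ on the right-hand side — splitting the integrand so as to expose both $(g(u)u^{\alpha}\eta^{p})^{1/(q+1)}$ and the Serrin weight $g(u)^{-1/q}$, then absorbing the Hölder power of the left-hand side back into the left — produces an estimate of the shape
$$\int g(u)u^{\alpha}\eta^{p}\leq C\,R^{-p(q+1)/q}\int_{B_{2R}\setminus B_R}u^{\alpha+(p-1)(q+1)/q}\,g(u)^{-1/q}.$$

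The closing step, which I expect to be the main technical obstacle, is to force the right-hand side to zero as $R\to\infty$, using the tail integrability $\int_{1}^{\infty}g(v)^{-1/q}\mathrm{d}v<\infty$. A further Young-type inequality (or, in the spirit of Keller--Osserman, a coarea reduction along level sets of $u$) should allow one to trade the polynomial factor $u^{\alpha+(p-1)(q+1)/q}$ against either $g(u)u^{\alpha}$ (to be absorbed back on the left) or against $g(u)^{-1/q}$ (to produce a tail-finite term on the right). In case~(1), the Serrin-subcritical constraint $p-1<q<p_{*}-1$ is expected to enter precisely here: it is exactly what makes the resulting scaling exponent in $R$ strictly negative, so that the cutoff decay $R^{-p(q+1)/q}$ overcomes the Euclidean volume growth $R^n$ of the annulus. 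In case~(2), $p\geq n$, there is no Sobolev-critical ceiling on $q$ and the cutoff decay alone is strong enough, which is why the hypothesis is simply $q>p-1$. In both regimes, letting $R\to\infty$ yields $\int g(u)u^{\alpha}=0$, incompatible with the positivity of $u$ and $g$, which is the desired contradiction.
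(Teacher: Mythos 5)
Your opening reduction (test with a nonnegative weight times a cutoff, use $a(\xi)\cdot\xi=H^p(\xi)$ and Young to get a Caccioppoli-type bound) is the right skeleton, but the decisive idea is missing, and it is exactly the step you flag as the ``main technical obstacle.'' With the pure power weight $\varphi=u^{\alpha}\eta^{p}$ the hypothesis $\int_1^{\infty}[-f(v)]^{-1/q}\,\mathrm{d}v<\infty$ never enters: it is an integral condition in the \emph{value} variable $v$ and gives no pointwise lower bound $-f(u)\gtrsim u^{q}$ (the integrand may be small on a sparse set of values and the integral still converge), nor any control of $u$ itself, which is only a positive subsolution and may be unbounded. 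Consequently neither proposed closing device works: a further Young inequality cannot trade $u^{\alpha+(p-1)(q+1)/q}$ against $g(u)u^{\alpha}$ or $g(u)^{-1/q}$ without such a pointwise comparison, and a coarea/level-set (Keller--Osserman) reduction would require control of $|\nabla u|$ on level sets that a weak subsolution does not provide. Your annulus term $\int_{B_{2R}\setminus B_R}u^{\alpha+(p-1)(q+1)/q}g(u)^{-1/q}$ is therefore simply not controlled by the hypotheses, so the contradiction does not close in either case (1) or (2).

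The paper's proof resolves this by building the test-function weight out of $f$ itself rather than out of a power of $u$: one takes $\varphi=\beta(u)\eta^{\gamma}$ with $\beta>0$, $\beta'>0$, applies a \emph{three}-factor Young inequality with indices $\bigl(a,\tfrac{p}{p-1},\tfrac{pa}{a-p}\bigr)$, $a=\tfrac{pq}{p-1}$, so that half of each left-hand term ($\int\beta(u)[-f(u)]\eta^{\gamma}$ and $\int\beta'(u)H^{p}(\nabla u)\eta^{\gamma}$) absorbs part of the cross term, and the leftover is
$C\int\bigl[[-f(u)]^{1/q}\tilde\beta'(u)\bigr]^{-\frac{q(p-1)}{q-p+1}}|\nabla\eta|^{\frac{pq}{q-p+1}}$
with $\tilde\beta$ an explicit decreasing-power transform of $\beta$. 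Choosing $\tilde\beta(u)=\int_{+\infty}^{u}[-f(v)]^{-1/q}\,\mathrm{d}v$ — which is finite precisely because of the tail-integrability hypothesis — makes the bracket identically $1$, so the right-hand side is $CR^{\,n-\frac{pq}{q-p+1}}$, and the exponent is negative exactly when $q<p_*-1$ (case $p<n$) or for every $q>p-1$ (case $p\geqslant n$). This Keller--Osserman-type substitution, which converts the integral hypothesis on $f$ into a bounded weight with $\tilde\beta'(u)=[-f(u)]^{-1/q}$, is the ingredient your proposal lacks; without it, or an equivalent mechanism, the argument cannot be completed under the stated assumptions.
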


\begin{corollary}
    If $\alpha>p-1>0$, then $-\Delta_p^Hu=-u^\alpha$ admits no positive subsolutions. 
\end{corollary}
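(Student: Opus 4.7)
The plan is a direct reduction to Theorem \ref{thm:Serrin-2} with the choice $f(u) := -u^{\alpha}$, which maps $\mathbb{R}_+$ into $\mathbb{R}_-$ and is continuous. The only thing to verify is that one can pick an exponent $q$ in the range dictated by the theorem so that the tail condition
$$\int_1^{+\infty}[-f(v)]^{-\frac{1}{q}}\,\mathrm{d}v=\int_1^{+\infty} v^{-\alpha/q}\,\mathrm{d}v$$
is finite; this integral converges precisely when $q<\alpha$.

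For the subcritical regime $1<p<n$, I would pick $q$ in the interval $(p-1,\min\{\alpha,\,p_*-1\})$. The lower bound $q>p-1$ and upper bound $q<p_*-1$ match hypothesis (1) of Theorem \ref{thm:Serrin-2}, while $q<\alpha$ secures convergence of the improper integral. The interval is nonempty because both $\alpha>p-1$ (given) and $p_*-1>p-1$ (which is equivalent to $p<n$). For the degenerate range $p\geqslant n$, the admissible range in hypothesis (2) is merely $q>p-1$; since $\alpha>p-1$, I can take any $q\in(p-1,\alpha)$, so that $q>p-1$ and $q<\alpha$ simultaneously, again yielding the integrability.

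In either case, Theorem \ref{thm:Serrin-2} applies and precludes the existence of positive subsolutions to $-\Delta_p^H u=-u^\alpha$. I do not anticipate any genuine obstacle: the corollary is essentially the power-law specialization of the theorem, and the only mild point is checking that the window for $q$ is nonempty, which reduces to the two quantitative inequalities noted above. No properties of $H$ beyond those already assumed enter this step, since the geometric structure is fully absorbed into the statement of Theorem \ref{thm:Serrin-2}.
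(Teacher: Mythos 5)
Your proposal is correct and matches the paper's (implicit) argument: the corollary is exactly the specialization of Theorem \ref{thm:Serrin-2} to $f(u)=-u^\alpha$, with the choice $q\in(p-1,\min\{\alpha,p_*-1\})$ when $p<n$ and $q\in(p-1,\alpha)$ when $p\geqslant n$, the nonemptiness of these intervals following from $\alpha>p-1$ and $p_*-1>p-1$. Nothing further is needed.
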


For the subcritical case, we employ the invariant tensor technique to extend Serrin-Zou's results to the anisotropic $p$-Laplace equation \eqref{eq:anisotropic}.

\begin{theorem}\label{thm:subcritical}
    Let $1<p<n$. Suppose $f\in C^1(\mathbb{R}_+)$ satisfies
    
    (1) There exist $\beta_0>p-1$, $c>0$ and $M>0$ such that $f(u)\geqslant cu^{\beta_0}$ for all $u\geqslant M$;
    
    (2) There exists $\alpha_0\in[\beta_0, p^*-1)$ such that $uf'(u)\leqslant\alpha_0 f(u)$ for all $u>0$.
    
    \noindent Then the equation \eqref{eq:anisotropic} admits no positive solutions.
\end{theorem}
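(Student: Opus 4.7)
My plan is to extend the invariant tensor method of Ma--Wu \cite{Ma-Wu-2024} and its classical $p$-Laplacian version due to Zhu \cite{zhu2024} to the anisotropic operator $\Delta_p^H$, and then combine it with the Pohozaev-type integration scheme of Serrin--Zou \cite{Serrin-Zou2002}. The target is a pointwise divergence identity of the schematic form
\[
\operatorname{div}\bigl(V(u,\nabla u)\bigr) \;=\; w(u,\nabla u)\,|E|_{H}^{2} \;+\; \bigl(\alpha_{0}-\alpha(u)\bigr)\,\Phi(u,\nabla u),
\]
where $\alpha(u):=uf'(u)/f(u)$, the weight $w$ and density $\Phi$ are nonnegative, $V$ is a vector field built from $u$, $\nabla u$ and $a(\nabla u)=\tfrac1p\nabla_{\xi}H^{p}(\nabla u)$, and $E$ is a ``Finslerian traceless'' tensor that vanishes precisely on the anisotropic critical bubbles of the form \eqref{sol:critical}. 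Hypothesis (2) would then make the right-hand side pointwise nonnegative, while hypothesis (1) would be reserved for controlling the behavior at infinity.

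In the first step, I would introduce an auxiliary variable $v=u^{-\gamma}$ with a suitably chosen exponent $\gamma>0$ (as in the isotropic case, $\gamma$ will be tuned to the dimension and to $p$), so that the principal part of the equation in $v$ takes a clean form; the invariant tensor $E$ is then built as the traceless part of a second-order expression in $v$, contracted in the inner product furnished by the Hessian $\nabla_{\xi}^{2}H^{p}(\nabla u)$. In the second step, differentiating \eqref{eq:anisotropic}, exploiting the homogeneity relations $\xi\cdot\nabla_{\xi}H^{p}(\xi)=pH^{p}(\xi)$ and $\nabla_{\xi}^{2}H^{p}(\xi)\,\xi=(p-1)\nabla_{\xi}H^{p}(\xi)$, and rearranging via the divergence theorem, one recovers the identity displayed above. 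Third, condition (1), combined with the Serrin-type machinery underlying Theorem \ref{thm:Serrin-1} and the standard $C^{1,\alpha}$ regularity for $\Delta_{p}^{H}$, should furnish a quantitative decay $u(x)\to 0$ at infinity, roughly $u(x)=O\bigl(H_{0}(x)^{-p/(\beta_{0}-p+1)}\bigr)$, together with a matching bound on $H(\nabla u)$. Finally, I would pair the identity against a power of a radial cutoff $\varphi_{R}\in C_{c}^{\infty}(\{H_{0}<2R\})$ with $\varphi_{R}=1$ on $\{H_{0}<R\}$, integrate by parts, and let $R\to\infty$. The decay estimates would force the boundary flux to vanish; thus $E\equiv 0$ and $(\alpha_{0}-\alpha(u))\Phi\equiv 0$ on all of $\mathbb{R}^{n}$. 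The rigidity $E\equiv 0$ characterises $u$ as an anisotropic critical bubble, which requires the exponent in the nonlinearity to equal $p^{*}-1$, contradicting the assumption $\alpha_{0}<p^{*}-1$ and so ruling out the existence of $u$.

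The hard part will be the \emph{construction} of $E$ together with the verification of the divergence identity. Algebraically, one has to choose the exponent $\gamma$ and the trace correction so that the cross terms produced by $\nabla_{\xi}^{2}H^{p}(\nabla u)$ contracted with $\nabla^{2}u$, combined with the $f'(u)$-terms, reassemble into a perfect square $|E|_{H}^{2}$ plus the favorably-signed remainder $(\alpha_{0}-\alpha(u))\Phi$; the Finsler inner product lacks the orthogonal decompositions available in the Euclidean case, so the bookkeeping is strictly heavier than in \cite{Ma-Wu-2024} or \cite{zhu2024}. Analytically, $H^{p}$ is only $C^{2}$ off the origin and $\Delta_{p}^{H}$ degenerates on the critical set of $u$, so the pointwise identity must be derived on $\{\nabla u\neq 0\}$ and the subsequent integration by parts justified by an approximation procedure (for instance replacing $H^{p}(\xi)$ by $(H^{2}(\xi)+\varepsilon)^{p/2}$ and letting $\varepsilon\to 0^{+}$, or restricting to $\{|\nabla u|>\delta\}$ and sending $\delta\to 0^{+}$). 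Once these two points are settled, the cutoff-and-decay argument would run essentially as in the isotropic model.
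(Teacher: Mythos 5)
Your overall blueprint (an anisotropic invariant tensor plus a divergence identity tested against cutoffs) is the right family of ideas, and the paper's Lemma \ref{lem:subcritical-1} indeed produces such an identity, with $\alpha(u)$ linear in $u$ and a weight $\beta(u)=u^{-\frac{p_*-2}{p^*-1}\alpha_0}$ rather than your substitution $v=u^{-\gamma}$. But your endgame has a genuine gap: you plan to kill the boundary flux by a priori decay of $u$ and $H(\nabla u)$ at infinity, ``roughly $u=O\bigl(H_0(x)^{-p/(\beta_0-p+1)}\bigr)$'', deduced from hypothesis (1) and Serrin-type machinery. No such pointwise decay is available for a general positive solution in the subcritical regime: no finite-energy or boundedness assumption is made, Serrin-type test functions only yield integral or $\inf$-over-annuli information, and upgrading this to pointwise bounds on $u$ and $\nabla u$ is exactly the rescaling/doubling argument of Theorem \ref{thm:asymptotic-1} and Corollary \ref{cor:asymptotic-2}, whose proof invokes the very Liouville theorem you are proving --- so this step is circular as stated. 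The only asymptotics that come for free are the lower bound of Lemma \ref{lem:asymptotic}, which points the wrong way for your purposes. Hence you cannot conclude $E\equiv 0$, and the subsequent rigidity step (``$E\equiv 0$ forces a critical bubble, hence exponent $p^{*}-1$'') is in any case delicate for a general $f$ satisfying only (1)--(2).

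There is also a structural omission: your schematic identity lacks the term that lets the subcritical argument close \emph{without} any decay. In the paper's identity \eqref{id} the right-hand side is $\beta(u)\bigl[E_{ij}E_{ji}+A\,H^{2p}(\nabla u)/u^{2}-\frac{n-1}{n}\bigl(f'(u)-\frac{\alpha_0}{u}f(u)\bigr)H^{p}(\nabla u)\bigr]$ with $A>0$ precisely because $\alpha_0<p^{*}-1$. The paper never shows $E\equiv 0$; it tests the identity with $\eta^{\gamma}$ to bound $\int\beta(u)\bigl(E_{ij}E_{ji}+H^{2p}(\nabla u)/u^{2}\bigr)\eta^{\gamma}$ by $C\int\beta(u)u^{2p-2}\eta^{\gamma-2p}|\nabla\eta|^{2p}$, then tests $\varphi=\beta(u)f(u)\eta^{\gamma}$ in the weak equation to convert this into $\int\beta(u)f^{2}(u)\eta^{\gamma}\leqslant C\int u^{g(\alpha_0)}\eta^{\gamma-2p}|\nabla\eta|^{2p}$ (Lemma \ref{lem:subcritical-2}); hypothesis (1) then lower-bounds $f$ by powers of $u$ on $\{u\leqslant M\}$ and $\{u>M\}$, and a careful choice of $\gamma$ (plus Lemma \ref{lem:asymptotic} in the case $g(\alpha_0)\leqslant 0$) yields a negative power of $R$, finishing the proof as $R\to\infty$. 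Your $(\alpha_0-\alpha(u))\Phi$ term, while correctly signed, carries no quantitative leverage, so without the $H^{2p}/u^{2}$ term and this second test-function identity the cutoff argument cannot close. (Your regularization worry, by contrast, is minor: by Lemma \ref{lem:regularity} one has $u\in C^{3}$ off the critical set, $u\in C^{1,q}_{\mathrm{loc}}$ and $a(\nabla u)\in H^{1}_{\mathrm{loc}}$, which suffices to integrate the identity directly.)
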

 
Similar to the proof of Theorem \ref{thm:subcritical}, we obtain the following Liouville result for bounded solutions. Here, the semilinear term $f$ may change sign, and we lack the asymptotic behavior of the positive solution $u$ (see Lemma \ref{lem:asymptotic}). 

\begin{theorem}\label{cor:subcritical}
    Let $1<p<n$ and $f\in C^1(\mathbb{R}_+)$. If there exists $\alpha_0\in (p-1,p^*-1)$ such that 
    $$\frac{p_*-2}{p^*-1}\alpha_0\leqslant 2(p-1)\quad\text{and}\quad uf'(u)\leqslant\alpha_0 f(u),\quad\forall u>0.$$ Then the equation \eqref{eq:anisotropic} admits no bounded positive solutions.
\end{theorem}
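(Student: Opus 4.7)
The plan is to parallel the proof of Theorem \ref{thm:subcritical}, using the same invariant-tensor / Bochner-type differential identity, but to control the annulus contributions via the $L^\infty$-bound on $u$ rather than the pointwise asymptotic decay (Lemma \ref{lem:asymptotic}) which is no longer available once $f$ is allowed to change sign. Since $u$ is bounded and $f\in C^1$, the forcing $f(u)$ is bounded, so anisotropic quasilinear regularity yields $u\in C^{1,\beta}_{\operatorname{loc}}(\mathbb R^n)$ and $u\in C^2$ off the critical set $\{\nabla u=0\}$, which is enough to use the pointwise identity derived in the proof of Theorem \ref{thm:subcritical}. Integrating the hypothesis $uf'(u)\leqslant\alpha_0 f(u)$ shows that $f(u)/u^{\alpha_0}$ is nonincreasing on $(0,\infty)$, so combined with $u\in L^\infty$ we obtain uniform pointwise bounds $|f(u)|, |uf'(u)|\leqslant C$.

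The identity from the proof of Theorem \ref{thm:subcritical} has the schematic form
\begin{equation*}
  \operatorname{div}\bigl(u^{\gamma}V(u,\nabla u)\bigr)\;\geqslant\;c\,u^{\gamma-\delta}\,|E|_{H}^{2}\;+\;\text{(reaction term)},
\end{equation*}
where the reaction term is absorbed using the structural inequality $uf'(u)\leqslant\alpha_0 f(u)$, provided $\gamma$ is chosen in a window determined by $n,p,\alpha_0$. Multiplying by a standard cutoff $\eta_{R}^{q}$ supported in the Finsler ball $B_{2R}^{H_{0}}$, integrating by parts, and absorbing the reaction term exactly as in that proof leads to an inequality of the form
\begin{equation*}
  \int_{\mathbb R^n}|E|_{H}^{2}\,u^{\gamma-\delta}\,\eta_{R}^{q}\;\leqslant\;C\int_{B_{2R}^{H_{0}}\setminus B_{R}^{H_{0}}}|\nabla\eta_{R}|^{q}\,\Phi(u,\nabla u,f(u)).
\end{equation*}

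The key—and hardest—step is to show that the right-hand side vanishes as $R\to\infty$ without the decay estimate of Lemma \ref{lem:asymptotic}. I would estimate $\Phi$ using $u,f(u)\in L^{\infty}$ together with an anisotropic Caccioppoli estimate for $|\nabla u|^{p}$ on dyadic annuli, obtained by testing the equation against $u\,\eta_{R}^{p}$. The resulting scaling is of the form $R^{n-\sigma}$, where $\sigma=\sigma(n,p,\alpha_{0})$ is determined jointly by the cutoff exponent $q$ and the Caccioppoli integrability. The main obstacle is exponent bookkeeping: the choice of $\gamma$ is constrained from one side by the absorption of the reaction term (governed by $\alpha_{0}$ and $p$) and from the other by the requirement $\sigma\geqslant n$. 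The algebraic condition $\frac{p_{*}-2}{p^{*}-1}\alpha_{0}\leqslant 2(p-1)$ is expected to be exactly the compatibility relation that allows these two windows to overlap; tracing through the identities of Theorem \ref{thm:subcritical} with the boundedness substitution should produce this inequality verbatim.

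Once the balance closes, letting $R\to\infty$ forces $|E|_{H}^{2}\,u^{\gamma-\delta}\equiv 0$, hence $E\equiv 0$ on the regular set. As in the proof of Theorem \ref{thm:subcritical}, the rigidity of the invariant tensor in the strictly subcritical range $\alpha_{0}<p^{*}-1$ (the bubble profile \eqref{sol:critical} being excluded from the subcritical equation) forces $u$ to be constant, ruling out any nontrivial bounded positive solution and completing the proof.
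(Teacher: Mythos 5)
Your skeleton (reuse the identity of Lemma \ref{lem:subcritical-1}, exploit boundedness and the fact that $uf'(u)\leqslant\alpha_0f(u)$ makes $f(t)t^{-\alpha_0}$ nonincreasing) points in the right direction, but the step you yourself flag as decisive is where the plan fails. The annulus term produced by the identity is $C\int u^{g(\alpha_0)}\eta^{\gamma-2p}|\nabla\eta|^{2p}$ with $g(\alpha_0)=2(p-1)-\frac{p_*-2}{p^*-1}\alpha_0$: it contains no gradient of $u$, so an anisotropic Caccioppoli estimate for $|\nabla u|^p$ buys nothing, and estimating it by $u\in L^\infty$ alone gives at best $CR^{n-2p}$, which does not tend to zero whenever $p\leqslant n/2$. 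The paper closes the estimate by a mechanism you omit: testing \eqref{weak-sol} with $\beta(u)f(u)\eta^\gamma$ and combining with \eqref{ineq:subcritical-1} to get the second integral inequality \eqref{ineq:integral} of Lemma \ref{lem:subcritical-2}, whose left side carries $u^{-\frac{p_*-2}{p^*-1}\alpha_0}f^2(u)$. Boundedness enters only through $f(u)\geqslant\frac{f(N)}{N^{\alpha_0}}u^{\alpha_0}$ on all of $\mathbb{R}^n$ (with $N=\sup u$), so the left side dominates $c\int u^{(2-\frac{p_*-2}{p^*-1})\alpha_0}\eta^\gamma$, and the annulus term is then absorbed into this term by Young's inequality with $\gamma=(2-\frac{p_*-2}{p^*-1})\frac{p\alpha_0}{\alpha_0-p+1}$; the leftover scales like $R^{\frac{n(p-1)(\alpha_0-p^*+1)}{(p^*-1)(\alpha_0-p+1)}}<0$ because $\alpha_0<p^*-1$, independently of the sign of $n-2p$. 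Your structural hypothesis $\frac{p_*-2}{p^*-1}\alpha_0\leqslant2(p-1)$ is precisely what legitimizes this absorption (it keeps the power $g(\alpha_0)$ of the possibly small quantity $u$ nonnegative, replacing the missing lower bound of Lemma \ref{lem:asymptotic}); in the borderline case $g(\alpha_0)=0$ it forces $p>\frac{n+1}{2}$, and only there does the crude bound $R^{n-2p}$ suffice. So the condition is not a Caccioppoli/cutoff compatibility relation, and your proposed route to a negative power of $R$ does not produce one.

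Your endgame is also off. The paper never argues through $E\equiv0$: letting $R\to\infty$ in the absorbed inequality forces $\int u^{(2-\frac{p_*-2}{p^*-1})\alpha_0}=0$ (resp.\ $\int u^{-\frac{p_*-2}{p^*-1}\alpha_0}f^2(u)=0$ when $g(\alpha_0)=0$), which contradicts positivity directly. Even if you did reach $E\equiv0$, the assertion that rigidity of the invariant tensor ``in the strictly subcritical range'' forces $u$ to be constant is unjustified as stated: vanishing of the analogous tensor is exactly what characterizes the nonconstant bubbles \eqref{sol:anisotropic-critical-sol} in Step 4 of the proof of Theorem \ref{thm:critical}, so from $E\equiv0$ you would still have to integrate the resulting first-order system, identify the explicit profile, and show it is incompatible with $uf'(u)\leqslant\alpha_0f(u)$ for $\alpha_0<p^*-1$ — an argument your sketch does not supply and the paper's proof deliberately avoids. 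As written, the proposal therefore has genuine gaps both at the decisive integral estimate and at the conclusion.
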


\begin{remark}
    When $p=2$, the assumption $\frac{p_*-2}{p^*-1}\alpha_0\leqslant 2(p-1)$ always holds because of $n\geqslant3$. Hence the assumption $\frac{p_*-2}{p^*-1}\alpha_0\leqslant 2(p-1)$ can be removed in Laplacian case.
\end{remark}

For the critical case $f(u)=u^{p^*-1}$, classifications of finite-energy positive solutions to the following anisotropic critical equation were established by Ciraolo-Figalli-Roncoroni \cite{Figalli2020}:
\begin{equation}\label{eq:anisotropic-critical}
    \begin{cases}
        -\Delta_{p}^{H}u=u^{p^*-1},~u>0,&\text{in }\Sigma,\\
        a(\nabla u)\cdot\nu=0,&\text{on }\partial\Sigma, \\
        u\in\mathcal{D}^{1,p}(\Sigma), 
    \end{cases}
\end{equation}
where $\Sigma$ is defined in Theorem \ref{thm:critical}, $\nu$ is unit outer vector of $\partial\Sigma$, and $\mathcal{D}^{1,p}(\Sigma):=\{u\in L^{p^*}(\Sigma)\mid \nabla u\in L^p(\Sigma)\}$. Using the key differential identity (proven in the subcritical case), we provide a simplified proof of this classification result.

\begin{theorem}(Theorem 1.1 in \cite{Figalli2020})\label{thm:critical}
    Let $1<p<n$, and let $\Sigma=\mathbb{R}^k\times\mathcal{C}\subset \mathbb R^n$ be a convex cone, where $\mathcal{C} \subset \mathbb{R}^{n-k}$ is a convex cone which does not contain a line. Then every solution $u$ of \eqref{eq:anisotropic-critical} has the form
    \begin{equation}\label{sol:anisotropic-critical-sol}
        u\equiv u_{\mu,x_0}^H := \left( \frac{ n^{\frac{1}{p}}  \left(\frac{n-p}{p-1}\right)^{\frac{p-1}{p}} \mu^{\frac{1}{p-1}} }{\mu^\frac{p}{p-1} + \hat{H}_0(x-x_0)^\frac{p}{p-1}} \right)^{\frac{n-p}{p}},\quad \forall x\in \Sigma,
    \end{equation}
    for some $\mu>0 $ and $x_0\in \overline{\Sigma}$. Moreover, 
    
    (1) if $k=n$, then $\Sigma=\mathbb R^n$, and $x_0$ may be a generic point in $\mathbb R^n$;
    
    (2) if $k\in\{1,\dots,n-1\}$, then $x_0\in\mathbb{R}^k\times\{0\}$;
     
    (3) if $k=0$, then $x_0=0$.
\end{theorem}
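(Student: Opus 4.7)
The plan is to reuse the differential identity derived in the course of Theorem \ref{thm:subcritical}. That identity, built from an invariant traceless tensor $E$ adapted to the anisotropic geometry (the anisotropic counterpart of the tensors used in \cite{Ma-Wu-2024, zhu2024}), has the schematic form
$$\operatorname{div}(X)\;=\;w|E|^2\;+\;\bigl(p^*-1-\alpha_0\bigr)G,$$
where $w\geqslant 0$ and $G\geqslant 0$, and $\alpha_0$ is the exponent of the nonlinearity. In the subcritical range the positive term $(p^*-1-\alpha_0)G$ was the engine of nonexistence. At the critical exponent $\alpha_0=p^*-1$ it simply disappears, leaving the pointwise identity $\operatorname{div}(X)=w|E|^2$ to be integrated on $\Sigma$.

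Before integrating, I would record the sharp asymptotic behaviour of finite-energy solutions of \eqref{eq:anisotropic-critical}: by Moser iteration adapted to the anisotropic $p$-Laplacian (cf.\ \cite{Figalli2020}), $u\in C^{1,\alpha}_{\mathrm{loc}}(\overline{\Sigma})$ with
$$u(x)\lesssim \hat H_0(x)^{-\frac{n-p}{p-1}},\qquad |\nabla u(x)|\lesssim \hat H_0(x)^{-\frac{n-p}{p-1}-1}$$
at infinity, matching the decay of the model bubble \eqref{sol:anisotropic-critical-sol}. These estimates guarantee the integrability of every term below.

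Next, multiply the identity by a cutoff $\eta_R\in C_c^\infty(B_R)$ and apply the divergence theorem on $\Sigma$. Two boundary contributions arise. On $\partial\Sigma=\mathbb{R}^k\times\partial\mathcal{C}$, the flux $X\cdot\nu$ factors through $a(\nabla u)\cdot\nu$, which vanishes by the conormal condition in \eqref{eq:anisotropic-critical}; the remaining piece is a quadratic form in the tangential component of $\nabla u$ contracted with the (anisotropic) second fundamental form of $\partial\mathcal{C}$ and therefore carries the favourable sign by convexity of $\mathcal{C}$. On $\partial B_R\cap\Sigma$ the flux tends to $0$ as $R\to\infty$ thanks to the decay estimates. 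Passing to the limit gives $\int_\Sigma w|E|^2\leqslant 0$, hence $E\equiv 0$ on $\Sigma$.

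Finally I would integrate the overdetermined system $E\equiv 0$. After a substitution of the form $v=u^{-p/(n-p)}$, it asserts that the anisotropic Hessian of $v$ is a scalar multiple of the anisotropic metric, so $v$ is affine in $\hat H_0(x-x_0)^{p/(p-1)}$; substituting back recovers exactly \eqref{sol:anisotropic-critical-sol}. The Neumann condition then locates $x_0$: when $k=n$ no constraint appears; for $1\leqslant k\leqslant n-1$, imposing $a(\nabla u^H_{\mu,x_0})\cdot\nu\equiv 0$ on $\mathbb{R}^k\times\partial\mathcal{C}$, together with the fact that $\mathcal{C}$ contains no line, forces $x_0\in\mathbb{R}^k\times\{0\}$; for $k=0$ it forces $x_0=0$. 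The main technical obstacle is that $\partial\mathcal{C}$ is a priori only Lipschitz, so the ``second fundamental form'' appearing in the boundary analysis is not immediately meaningful; as in \cite{Figalli2020}, this is handled by approximating $\mathcal{C}$ from inside by smooth uniformly convex cones $\mathcal{C}_\varepsilon\Subset\mathcal{C}$, running the argument on $\Sigma_\varepsilon=\mathbb{R}^k\times\mathcal{C}_\varepsilon$, and passing to the limit. The advantage of this route over the original proof in \cite{Figalli2020} is that no new vector field needs to be invented for the critical exponent: one simply specialises the subcritical identity at $\alpha_0=p^*-1$.
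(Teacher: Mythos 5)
Your proposal follows essentially the same route as the paper's proof: specialize the subcritical differential identity \eqref{id} to $\alpha_0=p^*-1$, invoke the decay estimates from \cite{Figalli2020} so that the cutoff term vanishes as $R\to\infty$ and hence $E\equiv 0$, and then integrate the resulting first-order system to recover \eqref{sol:anisotropic-critical-sol}, with the Neumann condition pinning down $x_0$. If anything, your handling of the boundary flux on $\partial\Sigma$ (favourable sign from convexity, smooth approximation of the Lipschitz cone) is spelled out more explicitly than in the paper, which simply tests $\eta^2$ in \eqref{id-critical}; the only other difference is cosmetic, namely your substitution $v=u^{-p/(n-p)}$ versus the paper's direct identification $u^{1-p_*}H^{p-1}(\nabla u)\nabla_\xi H(\nabla u)=a_0(x-x_0)$.
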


As an application of Liouville theorems, we study the singular behavior of nonnegative solutions to the equation
\begin{equation}\label{eq:anisotropic in Omega}
    -\Delta_p^H u=f(u),\quad\text{in }\Omega,
\end{equation}
where $\Omega\subset\mathbb R^n$ is a domain.
By combining the rescaling argument with a key doubling property, we can derive an a priori estimate of possible singularities of local solutions.

\begin{theorem}\label{thm:asymptotic-1}
    Let $\Omega\subset\mathbb R^n$ be a domain, $0<p-1<\alpha_0<p^*-1$. If $f\in C([0,\infty))\cap C^1(\mathbb{R}_+)$ satisfies
    \begin{equation}\label{eq:asymptotic-f}
        \lim_{t\to\infty}t^{-\alpha_0}f(t)=\tau\in\mathbb{R}_+.
    \end{equation}
    Then there exists a positive constant $C=C(n,p,\tau)$ such that for any nonnegative solution $u$ to \eqref{eq:anisotropic in Omega}, there holds
    \begin{equation}\label{eq:asymptotic-estimate 1}
        u(x)^{\frac{\alpha_0-p+1}{p}}+|\nabla u(x)|^{\frac{\alpha_0-p+1}{\alpha_0+1}}\leqslant C\left(1+d^{-1}(x)\right),\quad\forall x\in\Omega,
    \end{equation}
    where $d(x)=\operatorname{dist}(x,\partial\Omega)$. In particular, if $\Omega=B_{R_0}\backslash\{0\}$ for some $R_0>0$, then
    \begin{equation*}
        u(x)^{\frac{\alpha_0-p+1}{p}}+|\nabla u(x)|^{\frac{\alpha_0-p+1}{\alpha_0+1}}\leqslant C\left(1+|x|^{-1}\right),\quad\forall x\in B_{\frac{R}{2}}\backslash\{0\}.
    \end{equation*}
\end{theorem}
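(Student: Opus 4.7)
The plan is a contradiction-and-blowup argument combined with the Polacik-Quittner-Souplet doubling lemma and the subcritical Liouville theorem (Theorem \ref{thm:subcritical}). Define
\[
    M(u)(x) := u(x)^{\frac{\alpha_0-p+1}{p}} + |\nabla u(x)|^{\frac{\alpha_0-p+1}{\alpha_0+1}},
\]
so that under the natural scaling $u_\lambda(y) := \lambda^{p/(\alpha_0-p+1)} u(x_0 + \lambda y)$, which preserves solutions of $-\Delta_p^H w = w^{\alpha_0}$ (using that $a$ is homogeneous of degree $p-1$), one has $M(u_\lambda)(y) = \lambda\, M(u)(x_0+\lambda y)$; that is, $M$ is of scaling weight $1$. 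Suppose \eqref{eq:asymptotic-estimate 1} fails; then there exist solutions $u_k$ to \eqref{eq:anisotropic in Omega} and points $x_k\in\Omega$ with $M(u_k)(x_k)\,d(x_k)\to\infty$.

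Apply the Polacik-Quittner-Souplet doubling lemma to $M(u_k)$ on $\{x\in\Omega:M(u_k)(x)>0\}$ to obtain points $y_k$ satisfying $M(u_k)(y_k)\geqslant M(u_k)(x_k)$, $M(u_k)(y_k)\,d(y_k)\to\infty$, and $M(u_k)(z)\leqslant 2M(u_k)(y_k)$ on the ball $B(y_k,k/M(u_k)(y_k))\subset\Omega$. Set $\lambda_k:=M(u_k)(y_k)^{-1}\to 0$ and define
\[
    v_k(z):=\lambda_k^{\frac{p}{\alpha_0-p+1}}u_k(y_k+\lambda_k z),\qquad |z|<k.
\]
Then $M(v_k)(0)=1$, $M(v_k)(z)\leqslant 2$ for $|z|<k$, and $v_k$ weakly satisfies $-\Delta_p^H v_k=f_k(v_k)$, where $f_k(s):=\lambda_k^{p\alpha_0/(\alpha_0-p+1)}f\bigl(\lambda_k^{-p/(\alpha_0-p+1)}s\bigr)$. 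By \eqref{eq:asymptotic-f} and continuity of $f$ on $[0,\infty)$, one has $|f(t)|\leqslant C(1+t^{\alpha_0})$, and hence $|f_k(s)|\leqslant C(1+s^{\alpha_0})$ uniformly in $k$, so the right-hand sides are uniformly bounded on compact sets. The local $C^{1,\alpha}$ regularity theory for the anisotropic $p$-Laplace operator (in the spirit of Lieberman) then yields, along a subsequence, $v_k\to v$ in $C^1_{\mathrm{loc}}(\mathbb R^n)$ for some $v\in C^{1,\alpha}_{\mathrm{loc}}(\mathbb R^n)$ with $v\geqslant 0$, $M(v)(0)=1$, and $M(v)\leqslant 2$.

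To pass to the limit in the weak formulation, observe that where $v(z)>0$, the argument $\lambda_k^{-p/(\alpha_0-p+1)}v_k(z)\to+\infty$ and \eqref{eq:asymptotic-f} yield $f_k(v_k(z))\to\tau v(z)^{\alpha_0}$; where $v(z)=0$, the bound $|f_k(v_k(z))|\leqslant C(1+v_k(z)^{\alpha_0})$ together with $v_k(z)\to 0$ forces the limit to be $0$. Dominated convergence combined with the $C^1_{\mathrm{loc}}$ convergence of $v_k$ then gives $-\Delta_p^H v=\tau v^{\alpha_0}$ weakly on $\mathbb R^n$. If $v(0)>0$, then $v$ is a nontrivial nonnegative entire solution to this subcritical equation; since $\tau>0$ and $p-1<\alpha_0<p^*-1$, the hypotheses of Theorem \ref{thm:subcritical} are met with $\beta_0=\alpha_0$, forcing $v\equiv 0$, contradicting $v(0)>0$. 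If $v(0)=0$, then $v$ attains its global minimum at the origin, so $\nabla v(0)=0$ and $M(v)(0)=0$, contradicting $M(v)(0)=1$. The final assertion for $\Omega=B_{R_0}\setminus\{0\}$ is immediate since $d(x)=|x|$ whenever $|x|\leqslant R_0/2$. The main obstacle lies in the stability of the degenerate anisotropic operator under $C^{1,\alpha}$ limits together with the control of the rescaled nonlinearity near the zero set of $v$, both of which rely on well-established quasilinear regularity theory and the uniform growth bound on $f_k$.
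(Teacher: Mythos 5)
Your proposal follows essentially the same route as the paper: negate the estimate, apply the Pol\'a\v{c}ik--Quittner--Souplet doubling lemma, rescale with $\lambda_k=M(u_k)(y_k)^{-1}$, pass to a $C^1_{\mathrm{loc}}$ limit using the uniform growth bound $|f_k(s)|\leqslant C(1+s^{\alpha_0})$, identify the limit equation $-\Delta_p^H v=\tau v^{\alpha_0}$, and contradict the subcritical Liouville theorem. The only point to tighten is your case $v(0)>0$: Theorem \ref{thm:subcritical} excludes \emph{positive} solutions, not merely nontrivial nonnegative ones, so you should first invoke the strong maximum principle (Lemma \ref{lem:strong}) --- exactly as the paper does --- to upgrade the nonnegative limit $v$ with $v(0)>0$ to $v>0$ on all of $\mathbb R^n$ before applying it (the paper also cites Corollary \ref{cor:p>=n} so as to cover $p\geqslant n$, which your argument, resting solely on Theorem \ref{thm:subcritical}, implicitly restricts to $p<n$).
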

\begin{remark}
    As stated in Corollary \ref{cor:asymptotic-2}, when $\Omega \ne \mathbb R^n$ and $f(u)=u^{\alpha_0}$, we obtain the more precise estimate:
    $$u(x)^{\frac{\alpha_0-p+1}{p}}+|\nabla u(x)|^{\frac{\alpha_0-p+1}{\alpha_0+1}}\leqslant C d^{-1}(x),\quad \forall x\in\Omega.$$
\end{remark}

Beyond positive (or nonnegative) solutions, a natural question is whether sign-changing solutions to the equation
\begin{equation}\label{eq:classical-p-Laplacian-2}
    -\Delta_p u=|u|^{\alpha-1}u,\quad\text{in } \mathbb R^n
\end{equation}
also satisfy Liouville theorems. For the case $p=2$, this question was partially addressed by Bahri-Lions \cite{Bahri-Lions-1992}, who employed integral inequalities and Pohozaev identity to show that, when $1<\alpha<2^*-1$, every solution with finite Morse index to the above equation must be trivial. Later, Farina \cite{Farina-2007} and Damascelli-Farina-Sciunzi-Valdinoci \cite{Damascelli-Farina-Sciunzi-2009} established Liouville theorems for stable solutions to \eqref{eq:classical-p-Laplacian-2} in the cases $p=2$ and $p>2$, respectively, provided that $1<\alpha<p_{\mathrm{JL}}$ (see Corollary \ref{cor:stable-p-Laplacian}). It should be noted that $p_{\mathrm{JL}}>p^*-1$, when $n>2$. Recently, Farina \cite{Farina2024-B-anisotropic-stable} studied the more general quasilinear equation $-\Delta_B^H u=-\operatorname{div}(\nabla_{\xi}B(H(\nabla u)))=|u|^{\alpha-1}u$ in $\mathbb R^n$, and established corresponding Liouville theorems for stable solutions. Similar results for solutions that are stable outside a compact set have also been obtained in these works.

In the following, we consider stable solutions and solutions that are stable outside a compact set to the equation \eqref{eq:anisotropic} with $p\geqslant 2$. The linearized operator $\mathcal{L}_u$ of \eqref{eq:anisotropic} with respect to $u$ is defined by 
\begin{equation}
    \left<\mathcal{L}_u \varphi,\psi\right>_{L^2}=\int_{\mathbb R^n}\frac{1}{p}\nabla^2_{\xi}H^p(\nabla u):\nabla\varphi\otimes\nabla\psi-\int_{\mathbb{R}}f'(u)\varphi\psi,\quad\forall\varphi,\psi\in C_c^1(\mathbb R^n),
\end{equation}
where $\mathcal{L}_u\varphi:=-\operatorname{div}\left[\frac 1 p\nabla_\xi^2H^p(\nabla u)\cdot\nabla\varphi\right]-f'(u)\varphi$.

\begin{definition}
    Let $u\in W^{1,p}_{\mathrm{loc}}(\mathbb R^n)\cap L_{\mathrm{loc}}^\infty(\mathbb R^n)$ be a solution to \eqref{eq:anisotropic}. We say that $u$
    \begin{itemize}
        \item is stable if $\left<\mathcal{L}_u \varphi,\varphi\right>_{L^2}\geqslant 0$ for all $\varphi\in C_c^1(\mathbb R^n)$;
        \item has finite Morse index if $\max\limits_{\mathcal{M}\subset C_c^1(\mathbb R^n)}\operatorname{dim}\{\mathcal{M} \mid\left<\mathcal{L}_u \varphi,\varphi\right>_{L^2}<0,~\forall\varphi\in\mathcal{M}\}<\infty$;
        \item is stable outside a compact set $\mathcal{K}$ if $\left<\mathcal{L}_u \varphi,\varphi\right>_{L^2}\geqslant 0$ for all $\varphi\in C_c^1(\mathcal{K}^c)$.
    \end{itemize}
\end{definition}

\begin{remark}\label{rek:finite-outside}
    It is well known that if $u$ has a finite Morse index, then $u$ is stable outside a compact set.
\end{remark}

\begin{theorem}\label{thm:stable}
    Let $p\geqslant 2$, and let $f\in C^1(\mathbb{R})$ satisfy $uf(u)>0$ for all $u\ne0$. Suppose there exist constants $\alpha_0$ and $\beta_0$ such that one of the following holds:
    
    (1) $p<n$ and $p-1<\alpha_0\leqslant\beta_0< h(\alpha_0)$, where
    $$h(t)=\frac{n(p-1)}{n-p}+\frac{p}{n-p}\cdot\frac{2t-(p-1)+2\sqrt{t[t-(p-1)]}}{p-1},$$
    
    (2) $p\geqslant n$ and $p-1<\alpha_0\leqslant\beta_0<\infty$.
    
    \noindent Assume further that $f$ satisfies $u^2f'(u)\geqslant\alpha_0 uf(u)$ for all $u\in\mathbb{R}$, and there exist $c>0$ and $M>0$ such that $uf(u)\geqslant c|u|^{\beta_0+1}$ for all $|u|<M$. Then every stable solution to \eqref{eq:anisotropic} is trivial, that is $u\equiv 0$.
\end{theorem}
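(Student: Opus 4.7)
The plan is to follow the Farina-type argument adapted to the anisotropic $p$-Laplacian, combining the stability inequality with the weak formulation of the equation and a bootstrap through a one-parameter family of power test functions. The constraint $p-1<\alpha_0\le\beta_0<h(\alpha_0)$ on the exponent will come precisely from optimizing this family.

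First, I would take $\varphi=|u|^{2\gamma-1}\mathrm{sgn}(u)\,\eta^{2q}$ (suitably approximated to lie in $W^{1,p}$ with compact support) in the weak form \eqref{weak-sol}. Using the homogeneity of $H^p$, namely $\xi\cdot a(\xi)=H^p(\xi)$ and $|a(\xi)|\lesssim H(\xi)^{p-1}$, this yields an identity of the schematic form
\begin{equation*}
(2\gamma-1)\int H^p(\nabla u)|u|^{2\gamma-2}\eta^{2q}+2q\int a(\nabla u)\cdot\nabla\eta\,|u|^{2\gamma-1}\mathrm{sgn}(u)\eta^{2q-1}=\int f(u)|u|^{2\gamma-1}\mathrm{sgn}(u)\eta^{2q}.
\end{equation*}
Using $uf(u)>0$ and the hypothesis $u^2f'(u)\ge\alpha_0 uf(u)$, the right-hand side is controlled by $\alpha_0^{-1}\int f'(u)u^2|u|^{2\gamma-2}\eta^{2q}$. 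Next, I would plug $\varphi=|u|^{\gamma-1}u\,\eta^{q}$ into the stability inequality. The quadratic form $\tfrac{1}{p}\nabla_\xi^2H^p(\nabla u)\colon\nabla\varphi\otimes\nabla\varphi$ expands, using the identity $\nabla_\xi^2H^p(\xi)\xi=(p-1)\nabla_\xi H^p(\xi)$ and the $(p-2)$-homogeneity of $\nabla_\xi^2H^p$, into a combination of $\gamma^2H^p(\nabla u)|u|^{2\gamma-2}\eta^{2q}$, cross terms involving $\nabla\eta$, and a pure cutoff term of order $|u|^{2\gamma+p-2}|\nabla\eta|^p$ (here we use $p\ge 2$ to absorb the nonlinear cross terms via Young's inequality with weights dictated by $H$).

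Combining the two identities eliminates the $\int f'(u)u^2|u|^{2\gamma-2}\eta^{2q}$ term; the remaining coefficient of $\int H^p(\nabla u)|u|^{2\gamma-2}\eta^{2q}$ is a quadratic polynomial in $\gamma$ whose positivity on some nonempty interval is equivalent to the discriminant condition encoded in $h(\alpha_0)$. Quantitatively, positivity holds iff
\begin{equation*}
(p-1)\gamma^2-2\alpha_0\gamma+\alpha_0<0,
\end{equation*}
and I would choose $\gamma$ in this interval. This produces a Caccioppoli-type estimate
\begin{equation*}
\int H^p(\nabla u)|u|^{2\gamma-2}\eta^{2q}+\int|u|^{2\gamma+p-2-?}\bigl(\cdots\bigr)\,\eta^{2q}\le C\int|u|^{2\gamma+p-2}|\nabla\eta|^{p}\eta^{2q-p}.
\end{equation*}
To invert powers, I would then use the equation a second time with a new test function of the form $|u|^{s}\mathrm{sgn}(u)\,\eta^{r}$ together with the lower bound $uf(u)\ge c|u|^{\beta_0+1}$ for $|u|<M$ (and the upper bound implicit from $uf'\ge\alpha_0f$, which forces at-most polynomial growth at infinity of degree $\alpha_0$), to upgrade the gradient-weighted estimate to a pure $L^s$ estimate on $u$ on the support of $\eta$. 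The upper bound $\beta_0<h(\alpha_0)$ translates exactly into the existence of $\gamma$ and $s$ making the exponent balance admissible.

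The last step is the standard cutoff choice: let $\eta\in C_c^\infty(B_{2R})$ with $\eta\equiv 1$ on $B_R$ and $|\nabla\eta|\le C/R$. The inequality then reads $\int_{B_R}|u|^{s}\le C R^{n-\kappa}$ with $\kappa>0$, so letting $R\to\infty$ forces $u\equiv 0$ on the region $\{|u|<M\}$ first, and then, combining with $uf(u)>0$ and the equation on $\{|u|\ge M\}$ (which is then empty up to a set where the Caccioppoli estimate forces $\nabla u=0$), that $u\equiv 0$ everywhere. In the case $p\ge n$ one chooses $\gamma$ arbitrarily large and the algebraic constraint disappears, recovering conclusion (2).

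The main obstacle I expect is the bookkeeping for the quadratic form of $\nabla_\xi^2H^p$: unlike the Euclidean case, the cross terms between $\nabla u$ and $\nabla\eta$ are not controlled by a single Cauchy--Schwarz step, and one must use the uniform convexity of $H$ together with $p\ge 2$ (which guarantees $\nabla_\xi^2H^p(\xi)$ is comparable to $H^{p-2}(\xi)\,\nabla_\xi H\otimes\nabla_\xi H+H^{p-2}(\xi)g_0$ in a controlled way) to carry out the Young-inequality step with exactly the weights needed to make the coefficient polynomial in $\gamma$ come out to $(p-1)\gamma^2-2\alpha_0\gamma+\alpha_0$. A secondary technical issue is the admissibility of the power test functions: since $|u|^{2\gamma-1}$ is not $C^1$ near $\{u=0\}$, I would regularize by $\varepsilon+|u|^2$ and pass to the limit using the local boundedness of solutions (which follows from standard De Giorgi--Moser for the anisotropic operator).
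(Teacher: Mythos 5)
Your proposal follows essentially the same route as the paper's proof: test \eqref{weak-sol} with $|u|^{\delta-1}u\,\eta^{\gamma}$ and the stability inequality with $|u|^{\frac{\delta-1}{2}}u\,\eta^{\gamma/2}$, handle the anisotropic quadratic form via uniform convexity (Lemma \ref{lem:elliptic}) and the homogeneity identity, arrive at exactly the same admissibility quadratic $(p-1)\gamma^2-2\alpha_0\gamma+\alpha_0<0$, and conclude with the cutoff scaling $R^{\,n-\kappa}$, where $\beta_0<h(\alpha_0)$ is precisely the condition $\kappa>0$ when $\gamma$ approaches the larger root. One correction and one simplification: the hypothesis $u^2f'(u)\geqslant\alpha_0 uf(u)$ yields a \emph{lower} (not upper) growth bound, since $|f(u)|/|u|^{\alpha_0}$ is nondecreasing in $|u|$, hence $uf(u)\geqslant C|u|^{\alpha_0+1}$ for $|u|\geqslant M$; combined with $uf(u)\geqslant c|u|^{\beta_0+1}$ for $|u|\leqslant M$, this lower bound turns the Caccioppoli-type estimate directly into the desired $L^s$ bound after absorbing the right-hand side by Young's inequality, so no second use of the equation and no two-stage conclusion on $\{|u|<M\}$ and $\{|u|\geqslant M\}$ are needed; also, since $\gamma=1$ (i.e.\ $\delta=1$) already lies in the admissible interval, one may keep $\delta\geqslant1$ and avoid regularizing the power test functions.
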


\begin{remark}
    A direct calculation shows that $h(t)>t$ for all $t>p-1$, which implies 
    $$
    \{(\alpha_0,\beta_0)\mid p-1<\alpha_0\leqslant\beta_0 <h(\alpha_0)\}\ne\varnothing.
    $$
\end{remark}

When $f(u)=|u|^{\alpha_0-1}u$, Theorem \ref{thm:stable} reduces to the Joseph-Lundgren exponent result.

\begin{corollary}[Corollary 1.4 in \cite{Farina2024-B-anisotropic-stable}]\label{cor:stable-p-Laplacian}
    For $1\leqslant p-1<\alpha_0<p_\mathrm{JL}$, every stable solution to the equation $-\Delta_p^H u=|u|^{\alpha_0-1}u$ in $\mathbb R^n$ is trivial, where the Joseph-Lundgren exponent is given by 
    $$p_\mathrm{JL}=\begin{cases} 
        \infty,&n\leqslant\frac{p(p+3)}{p-1}, \\
        \frac{[(p-1)n-p]^2+p^2(p-2)-p^2(p-1)n+2p^2\sqrt{(p-1)(n-1)}}{(n-p)[(p-1)n-p(p+3)]},&n>\frac{p(p+3)}{p-1}.
    \end{cases}$$
\end{corollary}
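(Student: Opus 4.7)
The corollary is a direct specialization of Theorem \ref{thm:stable} to the model nonlinearity $f(u)=|u|^{\alpha_0-1}u$; the work is purely bookkeeping plus one algebraic identification. First, I would verify the structural hypotheses of Theorem \ref{thm:stable}. Since $\alpha_0>p-1\geqslant 1$, we have $f\in C^1(\mathbb R)$ with $f'(u)=\alpha_0|u|^{\alpha_0-1}$. A direct computation gives $uf(u)=|u|^{\alpha_0+1}>0$ for $u\ne 0$, and $u^2f'(u)=\alpha_0|u|^{\alpha_0+1}=\alpha_0 uf(u)$, so the monotonicity condition $u^2f'(u)\geqslant\alpha_0 uf(u)$ holds with equality on all of $\mathbb R$. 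The lower bound $uf(u)\geqslant c|u|^{\beta_0+1}$ holds globally (in particular for $|u|<M$, any $M$) with $\beta_0=\alpha_0$ and $c=1$. Consequently Theorem \ref{thm:stable} applies whenever $\alpha_0=\beta_0$ lies in the admissible range, which reduces to the scalar inequality $p-1<\alpha_0<h(\alpha_0)$ in regime (1) and $p-1<\alpha_0<\infty$ in regime (2).

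Next, I would translate this range into the explicit Joseph--Lundgren exponent. Note that if $p\geqslant n$ then $n\leqslant p<\frac{p(p+3)}{p-1}$, so $p_\mathrm{JL}=+\infty$ and regime (2) of Theorem \ref{thm:stable} gives exactly the statement. In the main case $p<n$, the equation $\alpha_0=h(\alpha_0)$ rearranges to
$$
a\alpha_0-b=2p\sqrt{\alpha_0(\alpha_0-p+1)},\qquad a:=n(p-1)-p(p+1),\ b:=(p-1)[n(p-1)-p].
$$
Squaring yields a quadratic in $\alpha_0$ whose leading coefficient factors neatly as $a^2-4p^2=(a-2p)(a+2p)=(n-p)(p-1)[n(p-1)-p(p+3)]$. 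This factorization is the key observation: the sign of the leading coefficient is controlled by the threshold $n=\frac{p(p+3)}{p-1}$ appearing in the statement. When $n\leqslant\frac{p(p+3)}{p-1}$, a monotonicity check on $h$ (which is manifestly increasing in $\alpha_0$) combined with $h(p-1)-(p-1)=p^2/(n-p)>0$ and the asymptotic $h(\alpha_0)/\alpha_0\to 4p/[(n-p)(p-1)]\geqslant 1$ shows that $h(\alpha_0)>\alpha_0$ on all of $(p-1,\infty)$, matching $p_\mathrm{JL}=+\infty$. When $n>\frac{p(p+3)}{p-1}$, the quadratic admits a unique root in $(p-1,\infty)$; solving it explicitly and discarding the extraneous root coming from the squaring operation produces the displayed formula for $p_\mathrm{JL}$.

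The only non-routine step is the algebraic verification that the relevant root of the squared quadratic is exactly the displayed $p_\mathrm{JL}$, and that the side condition $a\alpha_0-b\geqslant 0$ holds at the crossing point so that the squaring step is reversible. This is a finite computation: compute $-2ab+4p^2(p-1)$ and $b^2$ explicitly, apply the quadratic formula, and simplify the radical using $(p-1)(n-1)$ as the reduced discriminant. Once this identification is in hand, the monotonicity of $h$ gives $h(\alpha_0)>\alpha_0$ precisely on $(p-1,p_\mathrm{JL})$, so Theorem \ref{thm:stable} applies for every $\alpha_0$ in this interval and concludes $u\equiv 0$.
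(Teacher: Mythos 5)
Your proposal is correct and takes essentially the same route the paper intends: the corollary is stated there as a direct specialization of Theorem \ref{thm:stable} to $f(u)=|u|^{\alpha_0-1}u$ with $\beta_0=\alpha_0$, $c=1$ (so $u^2f'(u)=\alpha_0 uf(u)$ and $uf(u)=|u|^{\alpha_0+1}$), together with the identification $\{\alpha_0>p-1\mid \alpha_0<h(\alpha_0)\}=(p-1,p_{\mathrm{JL}})$, and your algebra (the factorization $a^2-4p^2=(n-p)(p-1)[n(p-1)-p(p+3)]$ and the reduced discriminant $(p-1)(n-1)$) does reproduce the displayed exponent as the larger root, the smaller root of the squared quadratic being the extraneous one coming from the negative branch of the square root, exactly as your side condition $a\alpha_0-b\geqslant 0$ discards. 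One small repair: in the regime $n\leqslant\frac{p(p+3)}{p-1}$, monotonicity of $h$ plus the asymptotic ratio $h(\alpha_0)/\alpha_0\to\frac{4p}{(n-p)(p-1)}\geqslant 1$ does not by itself exclude a dip of $h(t)-t$ below zero on a bounded interval; instead use that $h$ is concave (since $\sqrt{t(t-p+1)}$ is), so $h'(t)>\lim_{s\to\infty}h'(s)=\frac{4p}{(n-p)(p-1)}\geqslant 1$, hence $h(t)-t$ is increasing and stays above $h(p-1)-(p-1)=\frac{p^2}{n-p}>0$.
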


Using Pohozaev identity in the anisotropic setting (Theorem 1.3 in \cite{Montoro2023-Pohozaev}), we obtain the following Liouville result.

\begin{theorem}\label{thm:stable-outside}
    Let $p\geqslant 2$, and let $f\in C^1(\mathbb{R})$ satisfy $uf(u)>0$ for all $u\ne 0$. Suppose there exist constants $\alpha_0$, $\beta_0$, $c>0$, and $M>0$ such that one of the following conditions holds:
    
    (1) $p<n$, $p-1<\alpha_0\leqslant\beta_0<p^*-1$ and $f$ satisfies
    \begin{equation}\label{eq:f-outside-p<n}
        \alpha_0 uf(u)\leqslant u^2f'(u)\leqslant \beta_0 uf(u),\quad\forall u\in\mathbb{R}.
    \end{equation}
    
    (2) $p\geqslant n$, $p-1<\alpha_0\leqslant\beta_0<\infty$ and $f$ satisfies
    \begin{equation}\label{eq:f-outside-p>=n}
        u^2 f'(u)\geqslant\alpha_0 uf(u),\quad\forall u\in\mathbb{R},\quad\text{and}\quad uf(u)\geqslant c|u|^{\beta_0+1},\quad\forall |u|<M.
    \end{equation}
    
    \noindent Then every solution to \eqref{eq:anisotropic} that is stable outside a compact set $\mathcal{K}$ is trivial.
\end{theorem}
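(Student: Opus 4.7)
The plan is to combine the stability inequality on large annuli with the anisotropic Pohozaev identity (Theorem~1.3 of \cite{Montoro2023-Pohozaev}) to first show that $u$ must vanish outside a compact set, and then to apply Pohozaev once more to force $u\equiv 0$ globally.

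First, I choose a cutoff $\eta_R\in C_c^\infty(\mathbb{R}^n\setminus\mathcal{K})$ equal to $1$ on a large annulus disjoint from $\mathcal{K}$ and satisfying $|\nabla\eta_R|\le C/R$, and plug $\varphi=u\eta_R^s$ into the stability inequality
$$\int\frac{1}{p}\nabla_\xi^2H^p(\nabla u):\nabla\varphi\otimes\nabla\varphi\ge\int f'(u)\varphi^2.$$
Using the homogeneity identities $\frac{1}{p}\nabla_\xi^2H^p(\xi)\cdot\xi=(p-1)a(\xi)$ and $a(\xi)\cdot\xi=H^p(\xi)$, together with the equation tested against $u\eta_R^{2s}$, I expand and rearrange to obtain
$$(\alpha_0-(p-1))\int uf(u)\eta_R^{2s}\le C\int u^2|\nabla u|^{p-2}|\nabla\eta_R|^2\eta_R^{2s-2}.$$
A Young's inequality with conjugate exponents $(p/(p-2),p/2)$ (trivial for $p=2$) absorbs the $|\nabla u|^{p-2}$ factor back into $\varepsilon\int H^p(\nabla u)\eta_R^{2s}+C_\varepsilon\int|u|^p|\nabla\eta_R|^p\eta_R^{2s-p}$; the former can then be bounded by $\int uf(u)\eta_R^{2s}$ via the equation (up to boundary remainders) and absorbed into the left.

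Integration of $\alpha_0 uf(u)\le u^2 f'(u)$ yields $uf(u)\ge c|u|^{\alpha_0+1}$ for $|u|\ge 1$, while the near-origin bound $uf(u)\ge c|u|^{\beta_0+1}$ for $|u|<M$ is either a hypothesis (case~(2)) or follows from the two-sided control in \eqref{eq:f-outside-p<n} (case~(1)). Using these lower bounds together with Hölder's inequality with exponent $(\alpha_0+1)/p$ on the support of $\nabla\eta_R$ leads to
$$\int uf(u)\eta_R^{2s}\le CR^{\,n-\frac{p(\alpha_0+1)}{\alpha_0+1-p}}.$$
The exponent is negative exactly when $(n-p)(\alpha_0+1)<np$, which is the subcriticality condition $\alpha_0<p^*-1$ in case~(1) and is automatic when $p\ge n$ in case~(2). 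Sending $R\to\infty$ therefore forces $uf(u)\equiv 0$ on $\mathcal{K}^c$; since $uf(u)>0$ for $u\ne 0$, this yields $u\equiv 0$ outside a compact set.

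Now $u$ is compactly supported, hence belongs to the finite-energy space, and the anisotropic Pohozaev identity applies rigorously:
$$\frac{n-p}{p}\int_{\mathbb{R}^n}H^p(\nabla u)=n\int_{\mathbb{R}^n}F(u),\qquad F(u)=\int_0^u f(s)\,\mathrm{d}s.$$
Testing the equation against $u$ itself gives $\int H^p(\nabla u)=\int uf(u)$, and integration of $\alpha_0 uf(u)\le u^2 f'(u)$ produces $uf(u)\ge(\alpha_0+1)F(u)$; combining these three relations forces $(n-p)(\alpha_0+1)\le np$, i.e.\ $\alpha_0\ge p^*-1$, which contradicts the hypothesis in case~(1) unless $\int F(u)=0$. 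In case~(2) with $p\ge n$, the left-hand side of Pohozaev is non-positive while $F(u)\ge 0$ (since $uf>0$ for $u\ne 0$ and $F(0)=0$ by continuity of $f$), so $\int F(u)=0$ is immediate; either way $F(u)\equiv 0$ forces $u\equiv 0$. The principal obstacle I anticipate is the anisotropic Young--Hölder bookkeeping in the first step: the homogeneity degree $p-2$ of $\nabla_\xi^2 H^p$ must be carefully balanced against the cutoff exponent $s$ and against $\alpha_0$ so that every cross and boundary term is simultaneously absorbable, and the intermediate range $\{|u|\le 1\}$ (where $|u|^p$ is not dominated by $uf(u)$ up to an absolute constant) requires a separate splitting of the Hölder estimate near and far from $\{u=0\}$.
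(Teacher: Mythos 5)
Your overall blueprint (a Caccioppoli-type estimate from stability outside $\mathcal{K}$, followed by the anisotropic Pohozaev identity and the energy identity $\int H^p(\nabla u)=\int uf(u)$) is the same as the paper's, but two steps break down. First, your Step 1 conclusion that $uf(u)\equiv 0$ on $\mathcal{K}^c$ (hence that $u$ is compactly supported) does not follow from the estimate you derive. Since the stability inequality is only available for test functions supported in $\mathcal{K}^c$, your cutoff $\eta_R$ must vanish on a fixed neighborhood of $\mathcal{K}$, so $|\nabla\eta_R|$ is of size $O(1)$ on a fixed inner transition ring, independently of $R$. The right-hand side of your estimate therefore contains a constant term $C(u,\mathcal{K})$ that does not decay as $R\to\infty$; what you actually get is
\begin{equation*}
\int_{\{\eta_R=1\}} uf(u)+H^p(\nabla u)\;\leqslant\; C(u,\mathcal{K})+CR^{\,n-\frac{(\beta_0+1)p}{\beta_0-p+1}}\;\leqslant\; C,
\end{equation*}
i.e.\ only the global integrability $uf(u),\,H^p(\nabla u),\,\int_0^u f\in L^1(\mathbb{R}^n)$ (note also that the binding exponent comes from the near-zero region, hence involves $\beta_0$, not $\alpha_0$). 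This is exactly what the paper's Step 1 establishes, and it is all one needs: it justifies the Pohozaev identity on $\mathbb{R}^n$ and kills the boundary term when testing the equation with $u\eta$, so the detour through compact support is both unjustified and unnecessary.

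Second, and more seriously, your concluding comparison in case (1) uses the wrong half of \eqref{eq:f-outside-p<n}. Integrating $\alpha_0 uf(u)\leqslant u^2f'(u)$ gives $uf(u)\geqslant(\alpha_0+1)\int_0^u f$, i.e.\ $\int_0^u f\leqslant\frac{uf(u)}{\alpha_0+1}$; combined with Pohozaev and $\int H^p(\nabla u)=\int uf(u)$ this yields $\frac{n-p}{np}\int uf(u)\leqslant\frac{1}{\alpha_0+1}\int uf(u)$, which is simply the statement $\alpha_0\leqslant p^*-1$ and is consistent with your hypotheses — no contradiction. (Your phrase ``forces $(n-p)(\alpha_0+1)\leqslant np$, i.e.\ $\alpha_0\geqslant p^*-1$'' contains a sign slip: that inequality reads $\alpha_0\leqslant p^*-1$.) The contradiction must come from the upper bound $u^2f'(u)\leqslant\beta_0 uf(u)$, which upon integration gives $\int_0^u f\geqslant\frac{uf(u)}{\beta_0+1}$ and hence $\frac{n-p}{np}\int uf(u)\geqslant\frac{1}{\beta_0+1}\int uf(u)$, incompatible with $\beta_0<p^*-1$ unless $\int uf(u)=0$, i.e.\ $u\equiv0$. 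This is precisely why the theorem assumes the two-sided condition in case (1); your argument as written never uses the upper bound and therefore cannot close case (1). Your treatment of case (2) via the sign of $\frac{n-p}{p}$ is fine once the integrability of Step 1 (rather than compact support) is in hand.
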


\begin{remark}
    The assumptions \eqref{eq:f-outside-p<n} and \eqref{eq:f-outside-p>=n} on the semilinear term $f$ are nontrivial. For $\alpha_0<\beta_0$, the following functions satisfy these conditions:
    $$f(u)=|u|^{\alpha_0-1}u+|u|^{\beta_0-1}u \quad \text{and} \quad f(u)= \frac{|u|^{\gamma-1}u}{\delta(1+|u|^{\tau})},$$
    where the parameters $\delta,\tau,\gamma$ satisfy $\alpha_0<\alpha_0+\tau\leqslant\gamma\leqslant\beta_0$ and $0<\delta\leqslant \frac{M^{\gamma-\beta_0}}{c(1+M^\tau)}$.
\end{remark}

As a consequence of Remark \ref{rek:finite-outside} and Theorem \ref{thm:stable-outside}, we obtain a Liouville result for finite Morse index solutions.

\begin{corollary}
    Under the assumptions of Theorem \ref{thm:stable-outside}, every finite Morse index solution to \eqref{eq:anisotropic} is trivial.
\end{corollary}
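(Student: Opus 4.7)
The plan is to reduce the statement directly to Theorem \ref{thm:stable-outside} by means of Remark \ref{rek:finite-outside}: once we know that a finite Morse index solution is stable outside some compact set, there is nothing left to prove, since Theorem \ref{thm:stable-outside} already provides the triviality conclusion under the very same hypotheses on $p$ and $f$.

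To make this precise, let $u$ be a solution to \eqref{eq:anisotropic} with finite Morse index $k$. I would first choose a $k$-dimensional subspace $\mathcal{M}=\operatorname{span}\{\varphi_1,\dots,\varphi_k\}\subset C_c^1(\mathbb R^n)$ realizing the Morse index, i.e. on which the quadratic form $\langle\mathcal{L}_u\cdot,\cdot\rangle_{L^2}$ is negative definite, and then set $\mathcal{K}:=\overline{\bigcup_{i=1}^k \operatorname{supp}\varphi_i}$, which is compact. For any $\psi\in C_c^1(\mathcal{K}^c)$, the support of $\psi$ is disjoint from each $\operatorname{supp}\varphi_i$, so the quadratic form $\langle\mathcal{L}_u\cdot,\cdot\rangle_{L^2}$ restricted to $\operatorname{span}\{\psi,\varphi_1,\dots,\varphi_k\}$ splits as an orthogonal direct sum between the $\psi$-direction and $\mathcal{M}$. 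If one had $\langle\mathcal{L}_u\psi,\psi\rangle_{L^2}<0$, then the form would be negative definite on this $(k+1)$-dimensional subspace, contradicting the maximality definition of the Morse index. Hence $\langle\mathcal{L}_u\psi,\psi\rangle_{L^2}\geqslant 0$ for every $\psi\in C_c^1(\mathcal{K}^c)$, which is exactly the definition of $u$ being stable outside $\mathcal{K}$.

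Having verified this reduction, I would apply Theorem \ref{thm:stable-outside} to $u$ with the compact set $\mathcal{K}$ just constructed, which yields $u\equiv 0$ under either of the two alternative hypotheses on $(p,\alpha_0,\beta_0,f)$ in that theorem. There is no genuine obstacle in this argument: the substantive analytic content (the Pohozaev identity together with the stability-outside-a-compact-set Liouville theorem) was already carried out in the proof of Theorem \ref{thm:stable-outside}, and the only ingredient added here is the standard linear-algebraic observation above, which is precisely Remark \ref{rek:finite-outside}.
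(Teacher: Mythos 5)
Your proposal is correct and follows exactly the route the paper takes: the corollary is obtained by combining Remark \ref{rek:finite-outside} (finite Morse index implies stability outside a compact set) with Theorem \ref{thm:stable-outside}. The only addition is that you spell out the standard disjoint-support argument behind Remark \ref{rek:finite-outside}, which the paper treats as well known; that argument is sound.
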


This paper is organized as follows. In Section \ref{sec:preliminaries}, we introduce preliminary results and establish key lemmas for subsequent proofs. In Section \ref{sec:serrin-technique}, we prove Theorems \ref{thm:Serrin-1} and \ref{thm:Serrin-2} by using Serrin's technique. In Section \ref{sec:subcritical}, we first consider the subcritical case of equation \eqref{eq:anisotropic} and prove the corresponding Liouville theorems (Theorems \ref{thm:subcritical} and \ref{cor:subcritical}). As an application, we then provide a new proof of Theorem \ref{thm:critical} and derive asymptotic estimates for local solutions near singular points (Theorem \ref{thm:asymptotic-1}). Finally, in Section \ref{sec:stable}, we establish Liouville results for both stable solutions and solutions stable outside a compact set (Theorems \ref{thm:stable} and \ref{thm:stable-outside}) by combining Serrin's technique with Pohozaev identity.

Throughout this paper, we assume $R>1$. The constant $\varepsilon$ denotes a sufficiently small positive constant, while $C$ denotes a positive constant independent of $R$. Let $\eta$ be a smooth cutoff function supported in $B_{2R}$ satisfying
$$0\leqslant\eta\leqslant 1, \quad\eta\equiv 1,\quad\text{in } B_R,\quad\text{and}\quad |\nabla^l\eta|\leqslant CR^{-l}\quad\text{for }l=1,2.$$
Unless otherwise specified, all $p$-(super, sub)harmonic functions considered here are anisotropic, all solutions are understood in the weak sense, all integrals are concentrated on $\mathbb R^n$, and we use the summation convention for repeated indices from $1$ to $n$.
\section{Preliminaries}\label{sec:preliminaries}

In this section, $\Omega \subset \mathbb R^n$ is a domain. We recall some properties of a uniformly convex, positively homogeneous function $H$.

\begin{lemma}[Lemma 3.1 in \cite{Sanali2009-Overdetermined}, Proposition 1.3 in \cite{Xia2012-Thesis}]
    For any $x,\xi\ne 0$, $l\in\mathbb R$,
    $$\nabla_\xi H^l\cdot\xi=lH^l,~\nabla H_0^l\cdot x=lH_0^l,~\nabla_\xi^2 H^l\cdot\xi=(l-1)\nabla_\xi H^l,~\nabla^2 H_0^l\cdot x=(l-1)\nabla H_0^l,$$
    $$H(\nabla H_0)=1,~H_0(\nabla_{\xi} H)=1,~H_0\nabla_{\xi} H(\nabla H_0)=x,~H\nabla H_0(\nabla_{\xi}H)=\xi.$$
\end{lemma}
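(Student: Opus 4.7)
The lemma bundles three kinds of facts: the first-order Euler identities for $H^l$ and $H_0^l$, their second-order analogues, and four duality relations between $H$ and $H_0$. The first two groups are consequences of positive homogeneity alone, while the duality part additionally uses the uniform convexity of the unit ball $B_1^H$ and the $C^2$-regularity of $H$ away from the origin. The plan is to dispose of the Euler identities by direct differentiation, and then to treat the duality identities by a Lagrange-multiplier plus envelope-theorem argument.

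For the Euler identities, positive $1$-homogeneity of $H$ makes $H^l$ positively $l$-homogeneous, so differentiating $H^l(t\xi)=t^lH^l(\xi)$ at $t=1$ gives $\nabla_\xi H^l\cdot\xi=lH^l$; the $H_0^l$ version is identical once one notes that $H_0$ inherits positive $1$-homogeneity from its sup-definition. A further differentiation of the first-order identity produces $\nabla_\xi^2H^l\cdot\xi+\nabla_\xi H^l=l\nabla_\xi H^l$, which is exactly $\nabla_\xi^2H^l\cdot\xi=(l-1)\nabla_\xi H^l$, and similarly for $H_0^l$.

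For the duality part, fix $\xi_0$ with $H(\xi_0)=1$. Convexity of $H$ combined with Euler's identity yields $H(\xi)\geqslant H(\xi_0)+\nabla_\xi H(\xi_0)\cdot(\xi-\xi_0)=\nabla_\xi H(\xi_0)\cdot\xi$, so $\nabla_\xi H(\xi_0)\cdot\xi/H(\xi)\leqslant 1$ with equality at $\xi=\xi_0$; taking the supremum and invoking $0$-homogeneity of $\nabla_\xi H$ yields $H_0(\nabla_\xi H)\equiv 1$. Now for $x\ne 0$, uniform convexity of $H$ guarantees that $\xi\mapsto x\cdot\xi$ attains a unique maximum on $\{H=1\}$ at some $\xi^{*}$. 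Lagrange multipliers produce $x=\lambda\nabla_\xi H(\xi^{*})$, and pairing with $\xi^{*}$ together with Euler gives $\lambda=H_0(x)$, hence $x=H_0(x)\nabla_\xi H(\xi^{*})$. The envelope theorem applied to $H_0(x)=\max_{H(\xi)=1}x\cdot\xi$ identifies $\xi^{*}=\nabla H_0(x)$; combining these gives $H(\nabla H_0(x))=H(\xi^{*})=1$ and $x=H_0(x)\nabla_\xi H(\nabla H_0(x))$. Finally, the injectivity of $\nabla_\xi H|_{\{H=1\}}$ (the Gauss map of a uniformly convex body) lets us invert the last relation to obtain $\xi=H(\xi)\nabla H_0(\nabla_\xi H(\xi))$.

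The main subtlety is the duality step: uniqueness of $\xi^{*}$, the validity of the envelope theorem, and the injectivity of $\nabla_\xi H$ on $\{H=1\}$ all depend in an essential way on the uniform convexity and $C^2$-regularity of $H$. Once these are in place, every claim in the lemma reduces to either Euler's identity or a first-order optimality condition, so the proof is ultimately routine and the result can be viewed as a compendium of classical facts about convex positively homogeneous functions.
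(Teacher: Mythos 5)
Your argument is correct: the Euler identities follow exactly as you say from differentiating the homogeneity relation (once in $t$, then once more in $\xi$), and the duality identities follow from the gradient inequality for the convex function $H$, uniqueness of the maximizer on $\{H=1\}$, the Lagrange/envelope identification $\nabla H_0(x)=\xi^*(x)$, and injectivity of the normal map on a strictly convex level set. Note that the paper does not prove this lemma at all — it is quoted from \cite{Sanali2009-Overdetermined} and \cite{Xia2012-Thesis} — so there is no in-paper proof to compare with; your write-up is essentially the standard proof found in those sources. Two small points worth making explicit if you flesh this out: convexity of $H$ itself is not literally among the stated hypotheses but follows from the convexity of $B_1^H$ together with positive $1$-homogeneity (Minkowski gauge), and the envelope step needs the differentiability of $H_0$ away from the origin, which you should justify either by a Danskin-type theorem (uniqueness and continuity of the maximizer) or by noting that $H_0$ inherits $C^1$ regularity from the uniform convexity and $C^2$ smoothness of $H$; alternatively you can bypass the envelope theorem for the last identity by observing directly that $\xi$ itself is the unique maximizer of $\nabla_\xi H(\xi)\cdot\,\cdot$ over $\{H=1\}$, hence equals $\nabla H_0(\nabla_\xi H(\xi))$.
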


\begin{lemma}[Proposition 3.1 in \cite{Farina2016-Monotonicity}]\label{lem:elliptic}
    There exist $\lambda,\Lambda>0$ depending on $n,p,H$, such that
    \begin{equation*}
        \lambda|\xi|^{p-2}|\zeta|^2\leqslant\frac{1}{p}\nabla^2_{\xi_i\xi_j}H^p(\xi)\zeta_i\zeta_j\leqslant\Lambda|\xi|^{p-2}|\zeta|^2,\quad\forall \zeta\in\mathbb R^n,~\xi\in\mathbb R^n\backslash\{0\}.  
    \end{equation*}
\end{lemma}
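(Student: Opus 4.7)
The plan is to exploit the degree-$p$ positive homogeneity of $H^p$ to reduce the inequality to a compactness argument on the Euclidean unit sphere $S^{n-1}$. Since $\nabla_\xi^2 H^p$ is positively homogeneous of degree $p-2$, writing $\xi=|\xi|\omega$ with $\omega=\xi/|\xi|$ yields
$$\frac{1}{p}\nabla_\xi^2 H^p(\xi) = |\xi|^{p-2}\,\frac{1}{p}\nabla_\xi^2 H^p(\omega),$$
so it suffices to produce uniform upper and lower bounds on the quadratic form $M(\omega):=\frac{1}{p}\nabla_\xi^2 H^p(\omega)$ as $\omega$ ranges over the compact set $S^{n-1}$.

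I would next unfold the Hessian via the chain rule applied to $\nabla_\xi H^p = pH^{p-1}\nabla_\xi H$, obtaining
$$\frac{1}{p}\nabla_{\xi_i\xi_j}^2 H^p(\xi)\zeta_i\zeta_j = (p-1)H^{p-2}(\nabla_\xi H\cdot\zeta)^2 + H^{p-1}\nabla_\xi^2 H(\zeta,\zeta).$$
Both summands are nonnegative: the first is a square, and the second is nonnegative by convexity of $H$. The factors $H^{p-2}$ and $H^{p-1}$ are bounded above on $S^{n-1}$ because $H$ is continuous and positive there, and together with continuity of $\nabla_\xi H$ and $\nabla_\xi^2 H$ on $S^{n-1}$ this yields the upper bound $\Lambda$.

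For the lower bound I would establish pointwise positive definiteness of $M(\omega)$ and then use continuity plus compactness. Suppose the quadratic form vanishes for some $\zeta$; then both summands vanish, so $\nabla_\xi H(\omega)\cdot\zeta=0$ and $\nabla_\xi^2 H(\zeta,\zeta)=0$. The first equation places $\zeta$ in the tangent hyperplane to $\partial B_1^H$ at the boundary point $\omega/H(\omega)$, while the uniform convexity hypothesis on $B_1^H$ says precisely that $\nabla_\xi^2 H$ is strictly positive on this tangent hyperplane. Its one-dimensional kernel is spanned by $\omega$ itself, as follows from differentiating the Euler identity $\nabla_\xi H\cdot\xi=H$, and this direction is transverse to the tangent hyperplane because $\nabla_\xi H(\omega)\cdot\omega=H(\omega)\neq 0$. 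Hence $\zeta=0$, the smallest eigenvalue of $M(\omega)$ is a continuous, strictly positive function on $S^{n-1}$, and compactness gives $\lambda>0$; combined with the scaling above this proves the lemma.

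The main obstacle is making precise the identification used in the final step: translating the geometric hypothesis that $B_1^H$ is uniformly convex into the analytic statement that $\nabla_\xi^2 H$ is uniformly positive definite on the tangent hyperplane to $\partial B_1^H$. This is the classical correspondence between geometric and analytic uniform convexity of a smooth convex body, but it must be invoked carefully alongside the Euler relation $\nabla_\xi^2 H(\xi)\cdot\xi=0$ so that the degree-one scaling of $H$ does not introduce a spurious direction in the kernel of the quadratic form.
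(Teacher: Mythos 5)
Your argument is correct, and it is essentially the standard proof of this estimate. Note that the paper itself does not prove this lemma at all: it is quoted verbatim as Proposition 3.1 of the cited reference \cite{Farina2016-Monotonicity}, so there is no in-paper proof to compare against; your write-up supplies a self-contained justification along the classical lines (and, as far as I can tell, along the same lines as the cited source). The decomposition $\frac1p\nabla^2_\xi H^p=(p-1)H^{p-2}\nabla_\xi H\otimes\nabla_\xi H+H^{p-1}\nabla^2_\xi H$, the reduction to $\mathbb S^{n-1}$ by degree-$(p-2)$ homogeneity, and the compactness argument are all sound for every $p>1$, since $H$ is bounded away from $0$ and $\infty$ on the sphere. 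Two points you flag should indeed be written out when finalizing: first, the identification of uniform convexity of $B_1^H$ with positivity of $\nabla^2_\xi H$ on the tangent hyperplane is the standard fact that the second fundamental form of the level set $\{H=1\}$ equals $\nabla^2_\xi H$ restricted to the tangent space divided by $|\nabla_\xi H|$ (well defined because $\nabla_\xi H(\omega)\cdot\omega=H(\omega)>0$ forces $\nabla_\xi H\neq0$); second, in concluding that the kernel of $\nabla^2_\xi H(\omega)$ is exactly $\mathrm{span}(\omega)$ you should invoke positive semidefiniteness together with the Cauchy--Schwarz inequality for the bilinear form (so that $\nabla^2_\xi H(\omega)(\zeta,\zeta)=0$ forces $\zeta\in\ker\nabla^2_\xi H(\omega)$), after which the Euler relation $\nabla^2_\xi H(\omega)\,\omega=0$ and $\nabla_\xi H(\omega)\cdot\zeta=0$ give $\zeta=0$ exactly as you say. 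With these two steps made explicit the proof is complete.
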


Set $Z=\{x\in \mathbb R^n\mid\nabla u(x)=0\}$. The following lemma provides regularity results.

\begin{lemma}[Theorems 1.1, 1.2, 1.4 and Proposition 1.6  in \cite{Farina2023-Interior-regularity}]\label{lem:regularity}
    Let $f\in L^r(\Omega)$, $r>n$. Suppose $u\in W^{1,p}_{\mathrm{loc}}(\Omega)\cap L^{\infty}_{\mathrm{loc}}(\Omega)$ is a solution to $-\Delta_p^H u=f$ in $\Omega$, then $u\in H_{\mathrm{loc}}^2(\Omega\backslash Z)\cap C^{1,q}_{\mathrm{loc}}(\Omega)$, $a(\nabla u)\in H^1_{\mathrm{loc}}(\Omega)$, and $f=0$, a.e. $x\in Z$, where $q\in (0,1)$ depends on $n,p,r,H$.
\end{lemma}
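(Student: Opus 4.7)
The plan is to follow the standard regularity theory for quasilinear degenerate elliptic equations, specialized to the anisotropic setting via Lemma \ref{lem:elliptic}, which reduces everything to bounds structurally identical to the classical $p$-Laplacian case. The four conclusions ($C^{1,q}_{\mathrm{loc}}$, $H^2_{\mathrm{loc}}(\Omega\setminus Z)$, $a(\nabla u)\in H^1_{\mathrm{loc}}$, and $f=0$ a.e.\ on $Z$) are proved separately but in that order.

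For the $C^{1,q}_{\mathrm{loc}}(\Omega)$ regularity I would apply the DiBenedetto--Tolksdorf--Ural'tseva scheme. Lemma \ref{lem:elliptic} furnishes the standard $(p-2)$-degenerate ellipticity bounds on the symbol $\tfrac{1}{p}\nabla_\xi^2 H^p$, so the classical De Giorgi--Nash--Moser iteration, combined with the hypothesis $f\in L^r$ and $r>n$ (which sits in the subcritical Morrey scale), gives a uniform Hölder bound on $\nabla u$ with some exponent $q=q(n,p,r,H)\in(0,1)$. For the $H^2_{\mathrm{loc}}(\Omega\setminus Z)$ regularity I would work on any open $\Omega'$ with $\overline{\Omega'}\subset\Omega\setminus Z$ compact; continuity of $\nabla u$ together with compactness then forces $|\nabla u|\geqslant c_0>0$ on $\Omega'$, so by Lemma \ref{lem:elliptic} the equation becomes uniformly (non-degenerately) elliptic there. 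A standard Nirenberg difference-quotient argument --- testing the weak formulation against $\varphi=\tau_{-h}(\eta^2\tau_h u)$ with $\tau_h u(x):=u(x+he_k)-u(x)$ --- yields a Caccioppoli bound uniform in $h$, whence $\nabla u\in H^1_{\mathrm{loc}}(\Omega\setminus Z)$.

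For $a(\nabla u)\in H^1_{\mathrm{loc}}(\Omega)$ globally, the same difference-quotient scheme is applied, but now to the vector field $a(\nabla u)$ rather than to $u$. The crucial ingredient is the classical monotonicity inequality
$$[a(\xi)-a(\eta)]\cdot(\xi-\eta)\geqslant c_p\left(|\xi|^2+|\eta|^2\right)^{\frac{p-2}{2}}|\xi-\eta|^2,$$
valid for $p\geqslant 2$ (together with its $p'$-counterpart for $1<p<2$), which allows the difference quotient of $a(\nabla u)$ to be controlled directly by that of the equation, with no lower bound on $|\nabla u|$ required. Finally, once $a(\nabla u)\in H^1_{\mathrm{loc}}$ is established, $f=0$ a.e.\ on $Z$ follows from the Stampacchia zero-set lemma: $\nabla u=0$ pointwise on $Z$ and $a(0)=0$ give $a(\nabla u)\equiv 0$ on $Z$, so the weak gradient of each component of $a(\nabla u)$ vanishes a.e.\ on $Z$; combining with $f=-\operatorname{div}(a(\nabla u))$ a.e.\ yields the claim.

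The main obstacle is the third step: obtaining $H^1$ regularity for $a(\nabla u)$ globally, through the critical set $Z$. The degeneracy of the symbol at $\nabla u=0$ prevents any direct differentiation of $u$ or of $\nabla u$ near $Z$, and one must instead exploit the precise structural form $a=\tfrac{1}{p}\nabla_\xi H^p$ together with sharp monotonicity estimates to close the Caccioppoli inequality uniformly across $Z$. The anisotropy itself introduces no new obstacles beyond the classical $p$-Laplacian case, since all degeneracy information is encoded in the bilinear bounds of Lemma \ref{lem:elliptic}.
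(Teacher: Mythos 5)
The paper does not prove this lemma at all: it is imported verbatim as a citation (Theorems 1.1, 1.2, 1.4 and Proposition 1.6 of \cite{Farina2023-Interior-regularity}), so your from-scratch sketch is necessarily a different route. Your roadmap is in fact essentially how those cited results are established -- $C^{1,q}_{\mathrm{loc}}$ by Tolksdorf--DiBenedetto theory under the growth/ellipticity bounds of Lemma \ref{lem:elliptic}, $H^2_{\mathrm{loc}}$ off $Z$ by difference quotients once $|\nabla u|$ has a positive lower bound on compact subsets of $\Omega\setminus Z$, and $f=0$ a.e.\ on $Z$ via Stampacchia applied to the components of $a(\nabla u)$ once $a(\nabla u)\in H^1_{\mathrm{loc}}$ is known; that last deduction is exactly the content of Proposition 1.6 there and you reproduce it correctly.

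The one place where your sketch would not close as written is the step you yourself flag as the main obstacle, $a(\nabla u)\in H^1_{\mathrm{loc}}(\Omega)$ across $Z$. The monotonicity inequality alone does not control $\|\tau_h a(\nabla u)\|_{L^2}$: for $p\geqslant 2$ you also need the companion Lipschitz-type upper bound $|a(\xi)-a(\eta)|\leqslant C(|\xi|+|\eta|)^{p-2}|\xi-\eta|$ (which in the anisotropic case uses $H\in C^2(\mathbb R^n\setminus\{0\})$ together with homogeneity, not just Lemma \ref{lem:elliptic}), combined with $\nabla u\in L^\infty_{\mathrm{loc}}$, to convert the tested difference-quotient identity into the desired $L^2$ bound. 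For $1<p<2$ the resulting weight $(|\xi|+|\eta|)^{p-2}$ blows up precisely near $Z$, so the ``$p'$-counterpart'' you invoke does not make the same scheme go through verbatim; the singular case requires the finer Cianchi--Maz'ya-type argument for the stress field, which is precisely what the cited theorems supply. Since the lemma in the paper is stated for all $p>1$, this gap is genuine for a self-contained proof, and citing \cite{Farina2023-Interior-regularity} (as the authors do) is the appropriate resolution.
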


The positive semi-definite for invariant tensors plays a crucial role.

\begin{lemma}\label{lem:positivity}
    For the tensor $E_{ij}$ defined in \eqref{invariant} and constants $\lambda,\Lambda$ from Lemma \ref{lem:elliptic},
    \begin{equation}\label{eq 4.9}
        \frac{\Lambda}{\lambda} E_{ij}E_{ji}\geqslant\sum_{i,j=1}^n E_{ij}^2\geqslant 0.
    \end{equation}
\end{lemma}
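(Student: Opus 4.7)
The natural interpretation of $E_{ij}$ in the anisotropic setting is as a ``twisted'' tensor of the form $E_{ij}=A_{ik}S_{kj}$, where $A_{ij}:=\frac{1}{p}\nabla^2_\xi H^p(\nabla u)_{ij}$ is the symmetric positive definite matrix from Lemma \ref{lem:elliptic} and $S_{kj}$ is a symmetric (traceless-type) tensor. Because of this product structure, $E_{ij}$ is not itself symmetric, which is exactly why $E_{ij}E_{ji}$ and $\sum_{i,j}E_{ij}^2$ need not coincide; the lemma asserts that they are nonetheless comparable with constant $\Lambda/\lambda$.

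The plan is purely pointwise and algebraic. At any point with $\nabla u\ne 0$ (the locus $Z$ being handled by Lemma \ref{lem:regularity}), I would pick an orthonormal basis that diagonalizes $A$, say $A=\operatorname{diag}(a_1,\dots,a_n)$. By Lemma \ref{lem:elliptic}, each $a_i$ lies in $[\lambda|\nabla u|^{p-2},\Lambda|\nabla u|^{p-2}]$, so every ratio $a_i/a_j$ is bounded by $\Lambda/\lambda$. In this basis, $E_{ij}=a_iS_{ij}$ (no sum on $i$), and using $S_{ij}=S_{ji}$ one computes
$$E_{ij}E_{ji}=\sum_{i,j=1}^n a_ia_jS_{ij}^2,\qquad \sum_{i,j=1}^n E_{ij}^2=\sum_{i,j=1}^n a_i^2S_{ij}^2.$$
The claimed inequality thus reduces to the termwise bound $a_i^2+a_j^2\leqslant 2(\Lambda/\lambda)\,a_ia_j$ for every pair $(i,j)$, with the diagonal case $i=j$ being trivial since $\Lambda\geqslant\lambda$.

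This termwise bound is the elementary inequality
$$\frac{a_i^2+a_j^2}{2\,a_ia_j}=\frac{1}{2}\!\left(\frac{a_i}{a_j}+\frac{a_j}{a_i}\right)\leqslant\frac{1}{2}\!\left(1+\frac{\Lambda}{\lambda}\right)\leqslant\frac{\Lambda}{\lambda},$$
obtained by assuming without loss of generality $a_i\leqslant a_j$, so that $a_i/a_j\leqslant 1$ and $a_j/a_i\leqslant\Lambda/\lambda$. Summing over $(i,j)$ yields the first inequality in the statement, while the second inequality $\sum E_{ij}^2\geqslant 0$ is a trivial sum of squares. The main (and only) obstacle is committing to the correct product structure encoded in \eqref{invariant}; once that is in place, the argument is purely linear algebra with no analytic content.
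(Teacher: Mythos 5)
Your argument is correct and is essentially the paper's own proof: the paper writes $E=AB-\frac{1}{n}\operatorname{tr}(AB)I_n$ with $A=\frac{1}{p}\nabla^2_\xi H^p(\nabla u)$ and $B=\nabla^2u-\frac{\nabla u\otimes\nabla u}{(p-1)\alpha(u)}$ symmetric (equivalently your $E=AS$ with $S=B-\frac{1}{n}\operatorname{tr}(AB)A^{-1}$ symmetric), conjugates by an orthogonal matrix diagonalizing $A$, and applies the same entrywise eigenvalue-ratio bound from $\lambda|\nabla u|^{p-2}\leqslant a_i\leqslant\Lambda|\nabla u|^{p-2}$, the set $Z$ being trivial since $E\equiv 0$ there by definition. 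The only step you leave implicit --- that \eqref{invariant} actually has this product structure --- is exactly the paper's first line: the $(p-1)$-homogeneity of $\nabla_\xi H^p$ gives $\frac{1}{p}\nabla^2_\xi H^p(\nabla u)\cdot\nabla u=(p-1)a(\nabla u)$, hence $\partial_j(a_i(\nabla u))=A_{ik}u_{kj}$ and $\frac{a_i(\nabla u)u_j}{\alpha(u)}=A_{ik}\frac{u_ku_j}{(p-1)\alpha(u)}$, so the factorization you assert does hold.
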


\begin{proof}
    Denote $E=\{E_{ij}\}$. By using the $(p-1)$-homogeneity property of $\nabla_{\xi} H^p(\nabla u)$, $E=AB-\frac{1}{n} \operatorname{tr}(AB)I_n$, where $A=\frac{1}{p}\nabla^2_{\xi} H^p(\nabla u)$ and $B=\nabla^2 u-\frac{\nabla u \otimes \nabla u}{(p-1)\alpha(u)}$ are symmetric matrices.
    
    Choose $P\in O(n)$ such that $\tilde{A}:=P^{T}AP=\operatorname{diag}(a_1,\cdots,a_n)|\nabla u|^{p-2}$. Define $F=P^TEP$, then 
    $$F=\tilde{A}\tilde{B}-\frac{1}{n} \operatorname{tr} (\tilde{A}\tilde{B}) I_{n}$$
    with $\tilde{B}= P^{T}BP:=\{b_{ij}\}$. By Lemma \ref{lem:elliptic}, $\lambda\leqslant a_i\leqslant\Lambda$, $\forall 1\leqslant i \leqslant n$, thus for fixed $i\ne j$,
    \begin{equation*}
        F_{ij}^{2}=a_i^2 b_{ij}^2|\nabla u|^{2p-4} \leqslant \frac{\Lambda}{\lambda} a_i a_j b_{ij}^2 |\nabla u|^{2p-4} =\frac{\Lambda}{\lambda} F_{ij}F_{ji}.
    \end{equation*}
    Note that $\sum\limits_{i,j=1}^n E_{ij}^2 =\operatorname{tr}EE^T=\operatorname{tr}FF^T= \sum\limits_{i,j=1}^n F_{ij}^2$, $E_{ij}E_{ji} =\operatorname{tr}E^2=\operatorname{tr}F^2=F_{ij}F_{ji}$, hence 
    $$\sum\limits_{i,j=1}^n E_{ij}^2=\sum\limits_{i,j=1}^n F_{ij}^2\leqslant\sum_{i=1}^nF_{ii}^2+\frac{\Lambda}{\lambda}\sum_{i\ne j} F_{ij}F_{ji}\leqslant\frac{\Lambda}{\lambda}F_{ij}F_{ji}=\frac{\Lambda}{\lambda}E_{ij}E_{ji}.$$
\end{proof}

The following strong maximum principle is derived from Harnack inequality in Theorem 1.2 of \cite{Trudinger1967}. This result shows that, for anisotropic equations \eqref{eq:anisotropic} with $f\geqslant 0$, the study of nonnegative supersolutions can be reduced to that of positive supersolutions.

\begin{lemma}[Strong maximum principle]\label{lem:strong}
    Suppose $u\in W^{1,p}_{\mathrm{loc}}(\Omega) \cap C(\Omega)$ is a nonnegative $p$-superharmonic function, i.e., $-\Delta_p^H u\geqslant 0$ in $\Omega$ in weak sense. Then either $u\equiv 0$ or $u>0$ in $\Omega$.
\end{lemma}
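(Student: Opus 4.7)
The plan is to apply the weak Harnack inequality for nonnegative quasilinear supersolutions (Theorem 1.2 of \cite{Trudinger1967}) to show that the vanishing set of $u$ is both open and closed in $\Omega$, and then conclude by connectedness of the domain.

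First, I would set $S:=\{x\in\Omega\mid u(x)=0\}$. The continuity of $u$ makes $S$ closed in $\Omega$. Since $\Omega$ is a domain (hence connected), it suffices to prove that $S$ is also open; the two possibilities $S=\Omega$ and $S=\varnothing$ then correspond exactly to $u\equiv 0$ and $u>0$ in $\Omega$, respectively.

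Next, to establish openness, fix $x_0\in S$ and choose $r>0$ with $\overline{B_{2r}(x_0)}\subset\Omega$. I would first check that $-\operatorname{div}(a(\nabla u))$ with $a(\xi)=\tfrac{1}{p}\nabla_\xi H^p(\xi)$ fits into Trudinger's quasilinear divergence-form framework. Indeed, the $p$-homogeneity of $\nabla_\xi H^p$, the positivity of $H$ on $\mathbb{S}^{n-1}$, and the ellipticity in Lemma \ref{lem:elliptic} yield
$$a(\xi)\cdot\xi=H^p(\xi)\geqslant c_1|\xi|^p,\qquad |a(\xi)|\leqslant c_2|\xi|^{p-1},$$
for constants $c_1,c_2>0$ depending only on $p$ and $H$; these are precisely the natural coercivity and growth hypotheses of order $p-1$ required by \cite{Trudinger1967}. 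Since $-\Delta_p^H u\geqslant 0$ holds weakly, $u$ is a nonnegative weak supersolution, and the weak Harnack inequality produces exponents $q>0$ and a constant $C>0$, depending only on $n,p,H$, such that
$$\left(\frac{1}{|B_{2r}(x_0)|}\int_{B_{2r}(x_0)}u^q\,\mathrm{d}x\right)^{\frac{1}{q}}\leqslant C\inf_{B_r(x_0)}u,$$
where continuity of $u$ allows one to replace the essential infimum by the ordinary infimum.

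Finally, since $x_0\in B_r(x_0)$ and $u(x_0)=0$, the right-hand side vanishes, forcing $u\equiv 0$ a.e.\ on $B_{2r}(x_0)$ and, by continuity, identically zero there. Thus $B_{2r}(x_0)\subset S$, so $S$ is open. The only step that deserves care beyond invoking \cite{Trudinger1967} is matching the anisotropic structure to Trudinger's hypotheses, which is routine from the homogeneity of $H^p$ and Lemma \ref{lem:elliptic}; I do not anticipate any substantive obstacle.
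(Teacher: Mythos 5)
Your proposal is correct and follows essentially the same route as the paper, which states the lemma as a direct consequence of the Harnack (weak Harnack) inequality in Theorem 1.2 of \cite{Trudinger1967}; you merely make explicit the standard details — verifying the structure conditions $a(\xi)\cdot\xi=H^p(\xi)\geqslant c_1|\xi|^p$, $|a(\xi)|\leqslant c_2|\xi|^{p-1}$ via homogeneity, and running the open-closed (connectedness) argument on the zero set. The only cosmetic caveat is to choose $r$ small enough that the ball required by Trudinger's statement (slightly larger than $B_{2r}(x_0)$) is contained in $\Omega$, which does not affect the argument.
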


We provide a proof of the weak maximum principle (comparison principle) for the anisotropic $p$-Laplacian, although it's well-known.

\begin{lemma}[Comparison principle]\label{lem:comparison}
    Assume $u,v \in W^{1,p}_{\mathrm{loc}}(\mathbb R^n)\cap L_{\mathrm{loc}}^\infty(\mathbb R^n)$ satisfy $-\Delta_p^H u\leqslant -\Delta_p^H v$ weakly in $\Omega$, and $u\leqslant v$ weakly on $\partial \Omega$, i.e. $D_\varepsilon:=\{u-v>\varepsilon\}\subset\subset\Omega$ for every $\varepsilon>0$. Then $u\leqslant v$ in $\Omega$.
\end{lemma}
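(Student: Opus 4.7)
The plan is to use $\varphi := (u - v - \varepsilon)_+$ as a nonnegative test function in the difference of the two weak inequalities, and then to exploit the strict monotonicity of the vector field $a(\xi) = \tfrac{1}{p}\nabla_\xi H^p(\xi)$ inherited from the convexity of $H^p$.

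Since $D_\varepsilon \subset\subset \Omega$ by hypothesis, $\varphi$ lies in $W^{1,p}_{\mathrm{loc}}(\Omega)$ with compact support contained in $\Omega$, so it is admissible (by the remark after the definition of weak (super/sub)solutions, one may take test functions in $W^{1,p}_{\mathrm{loc}}$ with compact support). By the chain rule for Sobolev functions,
$$\nabla \varphi = (\nabla u - \nabla v)\,\chi_{D_\varepsilon}\qquad\text{a.e. in } \Omega.$$
Testing the weak inequality $-\Delta_p^H u \leqslant -\Delta_p^H v$ against $\varphi \geqslant 0$ then produces
$$\int_{D_\varepsilon} \bigl(a(\nabla u) - a(\nabla v)\bigr)\cdot(\nabla u - \nabla v)\, \mathrm{d}x \leqslant 0.$$

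Because $H^p$ is convex on $\mathbb R^n$ and strictly convex away from the origin (by the uniform convexity of $H$, and with the degenerate point $\xi=0$ handled directly from $a(0)=0$ together with $a(\eta)\cdot\eta = H^p(\eta)>0$ for $\eta\ne 0$), the vector field $a = \tfrac{1}{p}\nabla_\xi H^p$ is strictly monotone:
$$\bigl(a(\xi) - a(\eta)\bigr)\cdot(\xi - \eta) \geqslant 0,\qquad \text{with equality iff } \xi = \eta.$$
The integrand in the displayed inequality is therefore nonnegative, so it must vanish almost everywhere on $D_\varepsilon$. Consequently $\nabla u = \nabla v$ a.e.\ on $D_\varepsilon$, which forces $\nabla\varphi\equiv 0$ a.e.\ in $\Omega$.

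Since $\varphi\in W^{1,p}(\Omega)$ has compact support in $\Omega$ and vanishing weak gradient, Poincaré's inequality (applied on any large ball containing $\operatorname{supp}\varphi$) yields $\varphi\equiv 0$, i.e., $u\leqslant v+\varepsilon$ in $\Omega$; letting $\varepsilon\downarrow 0$ gives $u\leqslant v$. The only nontrivial ingredient is the strict monotonicity of $a$ in the anisotropic setting: for $p\geqslant 2$ it follows by integrating the lower bound of Lemma \ref{lem:elliptic} along the segment $[\eta,\xi]$, while for $1<p<2$ a standard weighted refinement (analogous to the one used for the classical $p$-Laplacian) is required. This is a well-known but slightly delicate step and is the only genuine technicality in the proof; the rest is soft.
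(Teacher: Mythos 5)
Your proposal is correct and follows essentially the same route as the paper: test the difference of the weak inequalities with the truncation $(u-v-\varepsilon)_+$ (the paper uses the equivalent $\min\{v-u+\varepsilon,0\}$), obtain $\int_{D_\varepsilon}\bigl(a(\nabla u)-a(\nabla v)\bigr)\cdot(\nabla u-\nabla v)\leqslant 0$, and conclude $\nabla u=\nabla v$ a.e.\ on $D_\varepsilon$ from monotonicity of $a$. The only cosmetic difference is that the paper invokes the quantitative monotonicity estimate (38) of \cite{Farina2023-Interior-regularity} (which covers both $1<p<2$ and $p\geqslant 2$ at once), whereas you sketch strict monotonicity directly and finish with a Poincar\'e argument in place of the paper's direct conclusion on $D_\varepsilon$.
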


\begin{proof}
    $\forall \varepsilon>0$, by testing $\min\{v(x)-u(x)+\varepsilon,0\}$ in $-\Delta_p^H u\leqslant -\Delta_p^H v$, we can deduce
    \begin{equation}\label{ineq:comparison}
        \int_{D_\varepsilon} [a(\nabla u)-a(\nabla v)] \cdot [\nabla u-\nabla v]\leqslant 0.
    \end{equation} 
    According to $(38)$ in \cite{Farina2023-Interior-regularity}, there exists a positive constant $\gamma_0$ depending only on $n,p,H$ such that  
    \begin{equation*}
        [a(\nabla u)-a(\nabla v)]\cdot [\nabla u-\nabla v]\geqslant \gamma_0
        \left\{\begin{aligned}
            & (1+|\nabla u|+|\nabla v|)^{p-2}|\nabla u-\nabla v|^2, && 1<p<2,\\
            & |\nabla u-\nabla v|^p, && p\geqslant 2.
        \end{aligned}\right.
    \end{equation*} 
    Hence \eqref{ineq:comparison} yields $\nabla u=\nabla v$ a.e. $D_\varepsilon$, which implies $u=v+\varepsilon$ in $ D_\varepsilon$, thus $u\leqslant v+\varepsilon$ in $\Omega$.
\end{proof}

As a consequence, the asymptotic behavior of positive $p$-superharmonic functions on exterior domains holds.

\begin{lemma}[Asymptotic behavior]\label{lem:asymptotic}
    Let $R_0>0$ and $B_{R_0}^c\subset\Omega$. Suppose $u \in W^{1,p}_{\mathrm{loc}}(\Omega)\cap C(\Omega)$ is $p$-superharmonic, then the following asymptotic estimates hold:
    
    (1) When $1<p<n$, $u(x)\geqslant C|x|^{-\frac{n-p}{p-1}}$ on $B_{2R_0}^c$ where $C>0$ depends on $n,p,u,H,R_0$.
    
    (2) When $p\geqslant n$, $\liminf\limits_{|x|\to \infty}u(x)>0$.
\end{lemma}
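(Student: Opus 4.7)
The plan is to apply the comparison principle (Lemma \ref{lem:comparison}) against an explicit family of anisotropic $p$-harmonic barriers on annular regions, then let the outer radius tend to infinity. By the strong maximum principle (Lemma \ref{lem:strong}), we may assume $u>0$ throughout $\Omega$, since otherwise the conclusions are vacuous.

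The first step is to identify the fundamental $p$-harmonic profile. Using the homogeneity and duality identities of the first preliminary lemma (both for the pair $(H,H_0)$ and for the dual pair $(\hat{H},\hat{H}_0)$, where the latter arises because $\hat{H}_0=\sup_{\hat{H}(\eta)=1}x\cdot\eta$ is the dual of $\hat{H}(\cdot)=H(-\cdot)$), along with the chain-rule consequence $\nabla_\xi H(-y)=-\nabla_\xi\hat{H}(y)$, one verifies that
$$\Phi(x):=\begin{cases}\hat{H}_0(x)^{(p-n)/(p-1)},&1<p<n,\\-\log\hat{H}_0(x),&p=n,\\H_0(x)^{(p-n)/(p-1)},&p>n,\end{cases}$$
satisfies $-\Delta_p^H\Phi=0$ on $\mathbb{R}^n\backslash\{0\}$. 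The $\hat{H}_0$-versus-$H_0$ dichotomy is forced by the sign of the exponent: when that exponent is nonpositive, $\nabla\Phi$ is antiparallel to $\nabla\hat{H}_0$ and the identity $H(-\nabla\hat{H}_0)=\hat{H}(\nabla\hat{H}_0)=1$ collapses $H(\nabla\Phi)$ to a power of $\hat{H}_0$; when it is positive, $\nabla\Phi$ is parallel to $\nabla H_0$ and $H(\nabla H_0)=1$ plays the same role. After this simplification, $a(\nabla\Phi)$ becomes a constant multiple of $\rho(x)^{-n}x$, where $\rho=\hat{H}_0$ or $H_0$, and its divergence vanishes by $\nabla\rho^\ell\cdot x=\ell\rho^\ell$ together with $\operatorname{div} x=n$.

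With $\Phi$ in hand, let $\rho$ be the norm appearing in $\Phi$ and set $\mathcal{V}_r:=\{\rho<r\}$. Choose $r_0>0$ with $\overline{B_{R_0}}\subset\subset\mathcal{V}_{r_0}$, and let $m:=\min_{\partial\mathcal{V}_{r_0}}u>0$ (positive since $\partial\mathcal{V}_{r_0}\subset\Omega$ and $u>0$ there). For each $R>r_0$, normalize $\Phi$ to obtain the barrier
$$v_R(x):=m\cdot\frac{\Phi(x)-\Phi|_{\rho=R}}{\Phi|_{\rho=r_0}-\Phi|_{\rho=R}},$$
which is $p$-harmonic on the Wulff annulus $A_R:=\mathcal{V}_R\backslash\overline{\mathcal{V}_{r_0}}\subset\Omega$, equals $m$ on $\partial\mathcal{V}_{r_0}$, and vanishes on $\partial\mathcal{V}_R$. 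Since $u\geqslant v_R$ on $\partial A_R$ and $-\Delta_p^H v_R=0\leqslant-\Delta_p^H u$, Lemma \ref{lem:comparison} yields $u\geqslant v_R$ on $A_R$. Passing $R\to\infty$: for $1<p<n$, $R^{(p-n)/(p-1)}\to 0$ and $v_R(x)\to m\,r_0^{(n-p)/(p-1)}\hat{H}_0(x)^{-(n-p)/(p-1)}$; the norm equivalence $c_1|x|\leqslant\hat{H}_0(x)\leqslant c_2|x|$ then converts this into the Euclidean estimate on $B_{2R_0}^c$. For $p\geqslant n$, a direct check shows $v_R(x)\to m$ pointwise on $\mathbb{R}^n\backslash\overline{\mathcal{V}_{r_0}}$ (because $R^{(p-n)/(p-1)}\to\infty$ dominates both numerator and denominator for $p>n$, and the logarithmic ratio tends to $1$ for $p=n$), giving $\liminf_{|x|\to\infty}u\geqslant m>0$.

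The main obstacle is the $p$-harmonicity verification for $\Phi$. Because $H$ is not assumed to be even-symmetric, one must keep careful track of the distinction between $H$ and $\hat{H}$ (and between $H_0$ and $\hat{H}_0$); this asymmetry is precisely what forces the three-case definition of $\Phi$, and it is the step most easily mishandled. Once the anisotropic fundamental solutions are in place, the barrier comparison and passage to the limit are routine.
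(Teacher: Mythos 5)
Part (1) and your $p=n$ case are essentially the paper's own argument: the same $\hat H_0$-based fundamental solution $\hat H_0^{-\frac{n-p}{p-1}}$ (resp.\ $-\log\hat H_0$), the same use of Lemma \ref{lem:comparison}, with your annulus exhaustion in place of the paper's direct comparison on the exterior domain being only a cosmetic difference. (Two small points: for $u\equiv 0$ the conclusions are false rather than vacuous --- the positivity of $u$ is an implicit hypothesis you share with the paper; and your set $\mathcal V_{r_0}$ need not be contained in $B_{2R_0}$, but the missing compact region is handled trivially by continuity and positivity of $u$.)

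The genuine gap is in the case $p>n$. Your profile $\Phi=H_0^{\frac{p-n}{p-1}}$ is indeed $\Delta_p^H$-harmonic away from the origin, but your barrier $v_R=m\,\frac{\Phi-\Phi|_{\rho=R}}{\Phi|_{\rho=r_0}-\Phi|_{\rho=R}}$ is \emph{not}: since $\Phi$ is increasing in $H_0$ and $r_0<R$, the coefficient multiplying $\Phi$ is negative, so $\nabla v_R$ is a \emph{negative} multiple of $\nabla H_0(x)$. Because $H$ is not assumed even, $a(\xi)=\frac1p\nabla_\xi H^p(\xi)$ is only positively $(p-1)$-homogeneous, and the identities you rely on do not apply to $-\nabla H_0$: one has $H(-\nabla H_0(x))=\hat H(\nabla H_0(x))$, which is in general nonconstant, and $\nabla_\xi H(-\nabla H_0(x))$ is not $\pm x/H_0(x)$. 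Consequently $a(\nabla v_R)$ does not reduce to a multiple of $H_0^{-n}x$, and $v_R$ is in general neither $p$-harmonic nor $p$-subharmonic, so the comparison step collapses precisely where you warned it is ``most easily mishandled'' --- the slip being in the normalization of the barrier rather than in $\Phi$ itself. The fix is to use a \emph{decreasing} profile whose gradient is a positive multiple of $\nabla H_0(-x)$: the function $-\hat H_0^{q}$ with $q=\frac{p-n}{p-1}>0$ satisfies $a(\nabla(-\hat H_0^{q}))=-q^{p-1}\hat H_0^{\,q(p-1)-p-q+1}x$ with vanishing divergence (the same computation as in your $1<p<n$ case), so the barrier $v_R=m\,\frac{R^{q}-\hat H_0^{q}}{R^{q}-r_0^{q}}$ is $p$-harmonic on the $\hat H_0$-annulus, equals $m$ on the inner boundary, vanishes on the outer one, and tends to $m$ as $R\to\infty$; alternatively, the paper's comparison function $\varepsilon_0\log_{\frac{2S}{R_0}}\bigl(\min_{|y|=4S}\hat H_0(y)/\hat H_0(x)\bigr)$, which is $p$-subharmonic, handles $p\geqslant n$ uniformly.
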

\begin{proof}
    (1) Set the fundamental solution $v(x)=\hat{H}_0^q(x)$, where $q=-\frac{n-p}{p-1}$. In fact, for $x\ne 0$,
    \begin{align*}
        \Delta_p^H v(x)=~&\operatorname{div}[a(q\hat{H}_0^{q-1}(x)\nabla\hat{H}_0(x))]=\operatorname{div}\{[-qH_0^{q-1}(-x)]^{p-1}a(\nabla H_0(-x)) \}\\
        =~&(-q)^{p-1}\operatorname{div}\{[H_0(-x)]^{(p-1)(q-1)-1}(-x)\}=(-q)^{p-1}(-n-pq+p+q)[H_0(-x)]^{pq-p-q}=0.
    \end{align*}

    Define $K_0=\min\limits_{|x|=2R_0}u(x) \cdot \min\limits_{|x|=2R_0}\hat H_0^{\frac{n-p}{p-1}}>0$ and $w=K_0 v$, then $w$ satisfies
    $$\begin{cases}
        -\Delta_p^H w\leqslant -\Delta_p^H u,&\text{on } B_{2R_0}^c,\\
        w=\min\limits_{|x|=2R}u(x) \leqslant u,&\text{on } \partial B_{2R_0},\\
        \limsup\limits_{|x|\to \infty} [w(x)-u(x)]\leqslant 0.
    \end{cases}$$
    It follows from Lemma \ref{lem:comparison} that $
    u(x)\geqslant w(x) \geqslant C |x|^{-\frac{n-p}{p-1}}$ for all $x\in B_{2R_0}^c$.

    (2) Let $\lambda_1=\min\limits_{|\xi|=1}H_0(\xi)$, $\lambda_2=\max\limits_{|\xi|=1}H_0(\xi)$ and $S>\frac{\lambda_2}{\lambda_1}R_0$. Define $h(x)=\log_{\frac{2S}{R_0}}\frac{\min\limits_{|y|=4S}\hat{H}_0(y)}{\hat{H}_0(x)}$, $x \in \Omega$, then
    \begin{align*}
        \Delta_p^H h(x)=~&\operatorname{div}\left[a\left(\left(H_0(-x)\log\frac{2S}{R_0}\right)^{-1}\nabla H_0(-x)\right)\right]\\
        =~&\left(\log\frac{2S}{R_0}\right)^{1-p}\operatorname{div}\{[H_0(-x)]^{-p}(-x)\}=\left(\log\frac{2S}{R_0}\right)^{1-p}\frac{p-n}{[H_0(-x)]^p} \geqslant0. 
    \end{align*}
    Let $\varepsilon_0=\min\limits_{|x|=2R_0}u(x)>0$. Note that $h\leqslant 1$ on $\partial B_{2R_0}$, and $h\leqslant 0$ on $\partial B_{4S}$. By applying Lemma \ref{lem:comparison} to $u$ and $\varepsilon_0 h$, it follows that $u \geqslant \varepsilon_0 h$ on $\overline{B_{4S}}\backslash B_{2R_0}$. In particular, when $|x|=2\sqrt{\frac{2\lambda_1 SR_0}{\lambda_2}}$, $h(x) \geqslant \log_{\frac{2S}{R_0}}\sqrt{\frac{2S \lambda_1}{R_0 \lambda_2}}$. By taking $S\to \infty$, we conclude that $\liminf\limits_{|x|\to \infty} u(x)\geqslant \frac{\varepsilon_0}{2}=\frac{1}{2}\min_{|x|=2R_0}u(x)>0$.
\end{proof}

The following corollary explains why our discussion of nonnegative supersolutions to $-\Delta_p^H u=f(u)\geqslant 0$ is concentrated on $1<p<n$.

\begin{corollary}\label{cor:p>=n}
    When $p\geqslant n$, any nonnegative superharmonic function is constant.
\end{corollary}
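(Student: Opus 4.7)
The plan is to argue by contradiction using the strong maximum principle together with the asymptotic lower bound from Lemma \ref{lem:asymptotic}(2). The key observation is that the anisotropic $p$-Laplacian depends only on $\nabla u$, so subtracting the infimum of $u$ produces a new nonnegative $p$-superharmonic function whose infimum is exactly zero; at that borderline value the strong maximum principle and the exterior Liouville-type estimate become incompatible.

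Concretely, suppose $u$ is a nonnegative $p$-superharmonic function on $\mathbb{R}^n$ that is not constant. Since $u\geqslant 0$, the quantity $m:=\inf_{\mathbb{R}^n} u$ is finite and nonnegative. Define $v:=u-m$. Then $v\in W^{1,p}_{\mathrm{loc}}(\mathbb{R}^n)\cap C(\mathbb{R}^n)$, $v\geqslant 0$, and $-\Delta_p^H v=-\Delta_p^H u\geqslant 0$ weakly on $\mathbb{R}^n$, with $\inf_{\mathbb{R}^n} v=0$. Because $u$ is not constant, $v\not\equiv 0$, so the strong maximum principle (Lemma \ref{lem:strong}) yields $v>0$ everywhere on $\mathbb{R}^n$.

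Next I extract the contradiction. Since $\inf v=0$ and $v$ is continuous, there exists a minimizing sequence $\{x_k\}\subset\mathbb{R}^n$ with $v(x_k)\to 0$. If $\{x_k\}$ were bounded, a subsequence would converge to some $x_\infty\in\mathbb{R}^n$ and continuity would force $v(x_\infty)=0$, contradicting $v>0$. Hence $|x_k|\to\infty$, so $\liminf_{|x|\to\infty} v(x)=0$. On the other hand, fixing any $R_0>0$, the function $v$ is a positive $p$-superharmonic function on the exterior domain $B_{R_0}^c$, so Lemma \ref{lem:asymptotic}(2)—which is precisely where the hypothesis $p\geqslant n$ is used—gives $\liminf_{|x|\to\infty} v(x)>0$. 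This contradicts the previous line, and we conclude that $u$ must be constant.

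The argument is genuinely short once Lemmas \ref{lem:strong} and \ref{lem:asymptotic} are available, so there is no serious obstacle; the only conceptual step worth highlighting is the translation $v=u-\inf u$, which normalizes the infimum to zero and thereby brings the strong maximum principle and the exterior lower bound into direct conflict. No new computation beyond invoking these two lemmas is required.
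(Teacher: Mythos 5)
Your proposal is correct and is essentially the paper's own argument: subtract the infimum, apply the strong maximum principle (Lemma \ref{lem:strong}) to get either $w\equiv 0$ or $w>0$, and in the latter case contradict $\inf w=0$ via the exterior lower bound of Lemma \ref{lem:asymptotic}(2) for $p\geqslant n$. Your only addition is spelling out explicitly that the minimizing sequence must escape to infinity, which the paper leaves implicit.
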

\begin{proof}
    For $p$-superharmonic function $u$, consider the nonnegative $p$-superharmonic function $w:=u-\inf\limits_{\mathbb R^n} u$. By Lemma \ref{lem:strong}, either $w \equiv 0$ or $w > 0$. In the latter case, we would have $\liminf\limits_{|x| \to \infty} w(x) > 0$, which contradicts the fact that $\inf\limits_{\mathbb R^n}w =0$. Therefore, we must have $w \equiv 0$, and $u$ is constant.
\end{proof}
\section{Serrin's technique: proofs of Theorems \ref{thm:Serrin-1} and \ref{thm:Serrin-2}}\label{sec:serrin-technique}

\begin{proof}[Proof of Theorem \ref{thm:Serrin-1}]
    Let $u$ be a positive supersolution to \eqref{eq:anisotropic}. Taking $\varphi=\beta(u)\eta^n$ in \eqref{weak-supsol}, where $\beta\in C^1(\mathbb{R}_+)$, $\beta>0$ and $\beta'<0$, we deduce
    \begin{equation}\label{ineq:Serrin-1}
        \int \beta(u)f(u)\eta^n-\int\beta'(u)H^p(\nabla u)\eta^n\leqslant n\int_{B_{2R}\backslash B_R}\beta(u)H^{p-1}(\nabla u)\nabla_\xi H(\nabla u)\cdot\eta^{n-1}\nabla\eta.
    \end{equation}
    By using $|\nabla_\xi H(\nabla u)|\leqslant C$ and Young's inequality with indices $\left(\frac{np}{n-p},~\frac{p}{p-1},n\right)$, we have
    \begin{align}
        \begin{split}\label{ineq:Serrin-2}
            &n\int_{B_{2R}\backslash B_R}\beta(u)H^{p-1}(\nabla u)\nabla_\xi H(\nabla u)\cdot\eta^{n-1}\nabla\eta\\
            \leqslant~&A\int_{B_{2R}\backslash B_R} \beta(u)f(u)\eta^n-A\int_{B_{2R}\backslash B_R}\beta'(u)H^p(\nabla u)\eta^n\\
            &+CA^{1-n}\int_{B_{2R}\backslash B_R}[f(u)]^{\frac{p-n}{p}}[\beta(u)]^{-n\left(\frac{n-p}{np}-1\right)}[-\beta'(u)]^{-\frac{p-1}{p}n}|\nabla\eta|^n,
        \end{split}
    \end{align}
    where $A>0$. Set $\tilde\beta(u)=[\beta(u)]^{-\frac{p}{(p-1)n}}$ and take $A=\frac{1}{2}$. Then \eqref{ineq:Serrin-1} becomes
    $$\int_{B_R} \beta(u)f(u)-\int_{B_R}\beta'(u)H^p(\nabla u)\leqslant C\int_{B_{2R}\backslash B_R}[f(u)]^{\frac{p-n}{p}}[\tilde\beta'(u)]^{-\frac{p-1}{p}n}|\nabla\eta|^n.$$
    
    Define $\displaystyle\tilde\beta(u)=\int_0^u[f(v)]^{-\frac{1}{p_*-1}}\mathrm dv$. Then $\beta\in C^1(\mathbb R_+)$, $\beta>0$, $\beta'<0$ and
    $$\int_{B_R} \beta(u)f(u)-\int_{B_R}\beta'(u)H^p(\nabla u)\leqslant C,$$
    which implies $\lim_{R\to\infty}\left(\int_{B_{2R}\backslash B_R} \beta(u)f(u)-\int_{B_{2R}\backslash B_R}\beta'(u)H^p(\nabla u)\right)=0$.
    
    For any $A>0$, by substituting \eqref{ineq:Serrin-2} into \eqref{ineq:Serrin-1} and letting $R \to \infty$, we conclude
    $$\int \beta(u)f(u)\eta^n-\int\beta'(u)H^p(\nabla u)\eta^n\leqslant CA^{1-n}.$$
    The conclusion follows by letting $A\to\infty$.
\end{proof}

Following a similar argument, we prove Theorem \ref{thm:Serrin-2}.

\begin{proof}[Proof of Theorem \ref{thm:Serrin-2}]
    Let $u$ be a positive subsolution to \eqref{eq:anisotropic}. Taking $\varphi=\beta(u)\eta^\gamma$ in \eqref{weak-supsol}, where $\beta\in C^1(\mathbb R_+)$, $\beta>0$, $\beta'>0$, and $\gamma>0$ is a constant to be determined, we deduce
    \begin{align*}
        &\int\beta(u)[-f(u)]\eta^\gamma+\int\beta'(u)H^p(\nabla u)\eta^\gamma\leqslant-\gamma\int\beta(u)H^{p-1}(\nabla u)\nabla_\xi H(\nabla u)\cdot\eta^{\gamma-1}\nabla\eta\\
        \leqslant~&\frac{1}{2}\int\beta(u)[-f(u)]\eta^\gamma+\frac{1}{2}\int \beta'(u)H^p(\nabla u)\eta^\gamma+C\int[-f(u)]^{-\frac{p}{a-p}}[\beta(u)]^{\frac{p(a-1)}{a-p}}[\beta'(u)]^{-\frac{(p-1)a}{a-p}}\eta^{\gamma-\frac{pa}{a-p}}|\nabla \eta|^{\frac{pa}{a-p}},
    \end{align*}
    where we use Young's inequality with indices $\left(a,~\frac{p}{p-1},~\frac{pa}{a-p}\right)$ for some $a>p$. By taking $\tilde\beta(u)=-[\beta(u)]^{\frac{p-a}{(p-1)a}}$, $\gamma=\frac{pa}{a-p}$ and $a=\frac{pq}{p-1}>p$, we obtain
    $$\int\beta(u)[-f(u)]\eta^\gamma+\int\beta'(u)H^p(\nabla u)\eta^\gamma\leqslant C\int \left[[-f(u)]^{\frac{1}{q}}\tilde\beta'(u)\right]^{-\frac{q(p-1)}{q-p+1}}|\nabla\eta|^{\frac{pq}{q-p+1}}.$$    

    Define $\tilde\beta(u)=\int_{+\infty}^{u}[-f(v)]^{-\frac{1}{q}}\mathrm{d}v$, then $\beta\in C^1(\mathbb R_+)$, $\beta>0$, $\beta'>0$, and
    $$\int\beta(u)[-f(u)]\eta^\gamma+\int\beta'(u)H^p(\nabla u)\eta^\gamma\leqslant CR^{n-\frac{pq}{q-p+1}}.$$ 
    The conclusion follows by letting $R\to\infty$.
\end{proof}
\section{Subcritical case: Liouville theorems and applications}\label{sec:subcritical}

In this section, we first study positive solutions to \eqref{eq:anisotropic} in the subcritical case under certain assumptions on $f$ and prove the corresponding Liouville results via the invariant tensor technique (Theorems \ref{thm:subcritical} and \ref{cor:subcritical}). Next, we provide a new proof of Theorem \ref{thm:critical}, the critical case. Finally, as an application of these Liouville results, we study the asymptotic behavior of nonnegative (local) solutions near singular points and prove Theorem \ref{thm:asymptotic-1}.

\subsection{Subcritical case: proofs of Theorems \ref{thm:subcritical} and \ref{cor:subcritical}}\label{subsec:proof-thm:subcritical}
Assume $1<p<n$, $u$ is a positive solution to \eqref{eq:anisotropic}. By Lemma \ref{lem:regularity} and bootstrapping regularity arguments, there exists $q\in (0,1)$ depending only on $n$, $p$, and $H$, such that
$u \in C^3(Z^c) \cap C_{\mathrm{loc}}^{1,q}(\mathbb R^n)$, $a(\nabla u) \in H^1_{\mathrm{loc}}(\mathbb R^n)$.

Let $\alpha:\mathbb{R}_+\to \mathbb{R}_+$ be a $C^1$ function. We consider the trace-free tensor
\begin{equation}\label{invariant}   
    E_{ij}=\begin{cases}
        \partial_j(a_i(\nabla u))-\frac{a_i(\nabla u)u_j}{\alpha(u)}-\frac{1}{n} \left(\Delta_p^H u-\frac{H^p(\nabla u)}{\alpha(u)}\right)\delta_{ij},&\quad\text{for } |\nabla u|\ne 0,\\
        0,&\quad\text{for }|\nabla u|=0,
    \end{cases}
\end{equation}
where $a_i(\nabla u)=\frac 1 p\nabla _{\xi_i}H^p(\nabla u)$ satisfies $a_i(\nabla u)u_i=H^p(\nabla u)$. Now, we prove the key differential identity. 

\begin{lemma}\label{lem:subcritical-1}
    The following differential identity holds:
    \begin{equation}\label{id}
        \partial_i [\beta(u)E_{ij}a_j(\nabla u)]=\beta(u)\left[E_{ij}E_{ji}+A\frac{H^{2p}(\nabla u)}{u^2}-\frac{n-1}{n}\left(f'(u)-\frac{\alpha_0}{u}f(u)\right)H^{p}(\nabla u)\right].
    \end{equation}
    where $A=\frac{n-1}{n}\frac{p_*-1}{p^*-1}\alpha_0\left(1-\frac{\alpha_0}{p^*-1}\right)$, $\alpha(u)=\frac{p^*-1}{(p_*-1)\alpha_0}u$ and $\beta(u)=u^{-\frac{p_*-2}{p^*-1}\alpha_0}$.
\end{lemma}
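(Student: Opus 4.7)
The plan is to expand $\partial_i[\beta(u)E_{ij}a_j(\nabla u)]$ by the product rule and reorganize everything into four building blocks, $T:=a_i a_j u_{ij}$, $H^{2p}(\nabla u)$, $f(u)H^p(\nabla u)$, and $f'(u)H^p(\nabla u)$; the two free functions $\alpha(u)$ and $\beta(u)$ are then tuned so that the $T$-coefficient vanishes and the remaining coefficients match the right-hand side of \eqref{id}. By Lemma \ref{lem:regularity} the pointwise computation is classical on $Z^c$, and since $f(u)=0$ a.e.\ on $Z$ and $a(\nabla u)\in H^1_{\mathrm{loc}}$, the resulting identity extends to all of $\mathbb R^n$ in the distributional sense.

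First, the product rule yields
$$\partial_i[\beta E_{ij}a_j]=\beta'(u)\,u_i E_{ij}a_j+\beta(u)(\partial_i E_{ij})a_j+\beta(u)\,E_{ij}\partial_i a_j.$$
Swapping the indices $i\leftrightarrow j$ in the definition of $E$ gives $\partial_i a_j=E_{ji}+a_j u_i/\alpha+\tfrac{1}{n}(\partial_k a_k-H^p/\alpha)\delta_{ij}$, and contracting with $E_{ij}$, together with $E_{ii}=0$, produces the clean reduction
$$E_{ij}\,\partial_i a_j=E_{ij}E_{ji}+\frac{1}{\alpha(u)}\,u_i E_{ij}a_j,$$
so the previous display becomes
$$\partial_i[\beta E_{ij}a_j]=\beta E_{ij}E_{ji}+\Bigl(\beta'+\tfrac{\beta}{\alpha}\Bigr)u_i E_{ij}a_j+\beta(\partial_i E_{ij})a_j.$$

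Next, the two bulk quantities are expanded using the Euler homogeneity $u_i\tfrac{1}{p}\partial^2_{\xi_i\xi_k}H^p(\nabla u)=(p-1)a_k$, the identity $a_j u_j=H^p$, the PDE $\partial_i a_i=-f(u)$, and $\partial_j H^p(\nabla u)=p a_k u_{jk}$. A direct calculation gives
\begin{align*}
u_i E_{ij}a_j&=(p-1)T-\tfrac{n-1}{n}\tfrac{H^{2p}}{\alpha}+\tfrac{fH^p}{n},\\
(\partial_i E_{ij})a_j&=-\tfrac{n-1}{n}f'(u)H^p+\tfrac{fH^p}{\alpha}-\tfrac{n-p}{n\alpha}T+\tfrac{n-1}{n}\tfrac{\alpha'H^{2p}}{\alpha^2}.
\end{align*}
Upon substitution, the $T$-coefficient collapses to $(p-1)\beta'+\tfrac{p(n+1)-2n}{n}\tfrac{\beta}{\alpha}$. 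Imposing the ansatz $\alpha(u)=cu$ and $\beta(u)=u^{-\gamma}$ turns the vanishing condition into $\gamma c(p-1)=\tfrac{p(n+1)-2n}{n}$; using $p_*-1=\tfrac{n(p-1)}{n-p}$ and $p_*-2=\tfrac{p(n+1)-2n}{n-p}$, the stated choices $c=\tfrac{p^*-1}{(p_*-1)\alpha_0}$ and $\gamma=\tfrac{p_*-2}{p^*-1}\alpha_0$ satisfy it.

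Finally, with these same choices a short calculation shows $\gamma+1-1/c=1-\tfrac{\alpha_0}{p^*-1}$, which makes the $H^{2p}/u^2$-coefficient equal to $\beta A$ with $A$ as stated; and the $fH^p$-coefficient reduces to $\tfrac{n-1}{n}\alpha_0\tfrac{\beta}{u}$ via the identity $\tfrac{n\,p_*-n+1}{p^*-1}=n-1$, reproducing the right-hand side of \eqref{id}. The only real obstacle is this bookkeeping: one must verify that the single two-parameter ansatz $\alpha=cu$, $\beta=u^{-\gamma}$ can simultaneously eliminate $T$ and produce the precise constants in the statement, which collapses to elementary identities in $n,p,\alpha_0$ once the definitions of $p_*$ and $p^*$ are unpacked, so no further structural cancellation is required.
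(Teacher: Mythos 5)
Your proposal is correct and follows essentially the same route as the paper: expand $\partial_i[\beta E_{ij}a_j(\nabla u)]$ by the product rule, use the $(p-1)$-homogeneity of $\nabla_\xi H^p$, $a_ju_j=H^p$, the equation $\partial_i a_i=-f(u)$ and the trace-freeness of $E$, and then tune $\alpha(u)=cu$, $\beta(u)=u^{-\gamma}$; your condition $\gamma c(p-1)=\frac{p(n+1)-2n}{n}$ is exactly the paper's vanishing of the $E_{ij}a_ju_i$-coefficient $\frac{\beta'}{\beta}+\frac{p_*-2}{p_*-1}\frac{1}{\alpha}$, only written after contracting with $a_j$ and isolating $T=a_ia_ju_{ij}$, and all remaining coefficients check out. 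The only (inessential) deviation is your closing remark about extending the identity distributionally across $Z$; the paper simply works pointwise on $Z^c$ and defers integrability issues to the subsequent lemma.
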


\begin{proof}
    All subsequent analysis will be confined to $Z^c$. A direct computation yields
    $$E_{ij}u_i=(p-1)a_k(\nabla u)u_{kj}-\frac{n-1}{n}\frac{H^p(\nabla u)}{\alpha(u)}u_j+\frac{1}{n}f(u)u_j.$$
    \begin{align*}
        \partial_i E_{ij}=~&\frac{n-1}{n}\partial_j(\Delta_p^H u)-\frac{\Delta_p^H u}{\alpha(u)}u_j -\frac{n-p}{n}\frac{a_i(\nabla u)}{\alpha(u)}u_{ij}+\frac{n-1}{n}\frac{\alpha'(u)}{\alpha^2(u)}H^p(\nabla u) u_j\\
        =~&-\frac{1}{p_*-1}\frac{E_{ij}u_i}{\alpha(u)}-\frac{n-1}{n}\left(f'(u)-\frac{p^*-1}{p_*-1}\frac{f(u)}{\alpha(u)}\right)u_j+\frac{n-1}{n}\frac{1}{\alpha^2(u)}\left(\alpha'(u)-\frac{1}{p_*-1}\right)H^p(\nabla u)u_j,
    \end{align*}
    Let $\beta:\mathbb{R}_+\to\mathbb{R}_+$ be a $C^1$ function, then
    \begin{align*}
        &[\beta(u)]^{-1}\partial_i [\beta(u)E_{ij}a_j(\nabla u)]\\
        =~&E_{ij}E_{ji}+\left(\frac{\beta'(u)}{\beta(u)}+\frac{p_*-2}{p_*-1}\frac{1}{\alpha(u)}\right)E_{ij}a_j(\nabla u)u_i\\
        &-\frac{n-1}{n}\left(f'(u)-\frac{p^*-1}{p_*-1}\frac{f(u)}{\alpha(u)}\right)H^{p}(\nabla u)+\frac{n-1}{n}\frac{1}{\alpha^2(u)}\left(\alpha'(u)-\frac{1}{p_*-1}\right)H^{2p}(\nabla u).
    \end{align*}
    By substituting $\alpha(u)=\frac{p^*-1}{(p_*-1)\alpha_0}u$ and $\beta(u)=u^{-\frac{p_*-2}{p^*-1}\alpha_0}$ into the above identity, we obtain \eqref{id}.
\end{proof}

\begin{remark}
    The invariant tensor technique is reflected in taking $\alpha(u)=\frac{p^*-1}{(p_*-1)\alpha_0}u$ in $E_{ij}$. We recommend readers to see \cite{Ma-Wu-2024} for more details.
\end{remark}

\begin{lemma}\label{lem:subcritical-2}
    Set $g(\alpha_0)=-\frac{p_*-2}{p^*-1}\alpha_0+2p-2$. In condition of Theorem \ref{thm:subcritical} or \ref{cor:subcritical}, for any $\gamma>0$,
    \begin{equation}\label{ineq:integral}
        \int u^{-\frac{p_*-2}{p^*-1}\alpha_0}f^2(u)\eta^\gamma\leqslant C\int u^{g(\alpha_0)}\eta^{\gamma-2p}|\nabla\eta|^{2p}.
    \end{equation} 
\end{lemma}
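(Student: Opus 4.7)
The strategy is to first extract from the key differential identity \eqref{id} a weighted integral control on $\int \beta(u) H^{2p}(\nabla u) u^{-2}\eta^\gamma$ with $\beta(u)=u^{-\frac{p_*-2}{p^*-1}\alpha_0}$, and then to convert this into the $f^2$ estimate by testing the equation $-\Delta_p^H u=f(u)$ against $\beta(u)f(u)\eta^\gamma$. Both steps rely on the same Young's inequality that matches the power $H^{2p-2}$ (which arises after Cauchy--Schwarz) against the pair $H^{2p}/u^2$ and $u^{2(p-1)}|\nabla\eta|^{2p}$.

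First I would multiply \eqref{id} by $\eta^\gamma$ and integrate by parts. Each of the three terms on the right of the identity is nonnegative: $\beta(u) E_{ij}E_{ji}\geqslant 0$ by Lemma \ref{lem:positivity}, $\beta(u) A H^{2p}/u^2\geqslant 0$ since $A>0$ for $\alpha_0\in(p-1,p^*-1)$, and $-\frac{n-1}{n}\beta(u)\bigl(f'(u)-\frac{\alpha_0}{u}f(u)\bigr)H^p\geqslant 0$ by the hypothesis $uf'(u)\leqslant\alpha_0 f(u)$. The boundary integral arising from the integration by parts is controlled using $|a(\nabla u)|\leqslant CH^{p-1}(\nabla u)$ and $\sum_{i,j}E_{ij}^2\leqslant\frac{\Lambda}{\lambda}E_{ij}E_{ji}$ combined with AM--GM, producing an upper bound of the form $\varepsilon\int\beta(u) E_{ij}E_{ji}\eta^\gamma+C_\varepsilon\int\beta(u) H^{2p-2}|\nabla\eta|^2\eta^{\gamma-2}$. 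Absorbing the $E_{ij}E_{ji}$ piece into the identity leaves
$$\int \beta(u)\frac{H^{2p}(\nabla u)}{u^2}\eta^\gamma\leqslant C\int \beta(u) H^{2p-2}(\nabla u)|\nabla\eta|^2\eta^{\gamma-2}.$$
To close the loop, factor $H^{2p-2}=(H^{2p}/u^2)^{(p-1)/p}\cdot u^{2(p-1)/p}$ and apply Young's inequality with conjugate exponents $\bigl(\frac{p}{p-1},p\bigr)$; since $\beta(u)u^{2(p-1)}=u^{g(\alpha_0)}$ by the very definitions of $\beta$ and $g$, one absorbs the $H^{2p}/u^2$ part on the left and obtains the intermediate estimate
$$\int \beta(u)\frac{H^{2p}(\nabla u)}{u^2}\eta^\gamma\leqslant C\int u^{g(\alpha_0)}|\nabla\eta|^{2p}\eta^{\gamma-2p}. \qquad(\star)$$

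Second, I would test the equation with $\varphi=\beta(u)f(u)\eta^\gamma$ and integrate by parts:
$$\int \beta(u)f^2(u)\eta^\gamma=\int (\beta f)'(u) H^p(\nabla u)\eta^\gamma+\gamma\int \beta(u)f(u)\,a(\nabla u)\cdot\nabla\eta\,\eta^{\gamma-1}.$$
Using $u\beta'(u)/\beta(u)=-\frac{p_*-2}{p^*-1}\alpha_0$ together with $uf'(u)\leqslant\alpha_0 f(u)$, one derives $(\beta f)'(u)\leqslant\alpha_0\bigl(1-\frac{p_*-2}{p^*-1}\bigr)\beta(u)f(u)/u$. Multiplying by $H^p\geqslant 0$ and applying the AM--GM bound $|f|\cdot H^p/u\leqslant\varepsilon f^2+C_\varepsilon H^{2p}/u^2$ controls the first integral on the right by $\varepsilon\int\beta(u) f^2\eta^\gamma+C_\varepsilon\int\beta(u) H^{2p}/u^2\eta^\gamma$; an analogous AM--GM applied to the boundary integral controls it by $\varepsilon\int\beta(u) f^2\eta^\gamma+C_\varepsilon\int\beta(u) H^{2p-2}|\nabla\eta|^2\eta^{\gamma-2}$. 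Choosing $\varepsilon$ small, absorbing the $\beta(u) f^2$ pieces back on the left, then invoking $(\star)$ and the same Young's factorization above to handle the residual $H^{2p-2}$ integral, one obtains \eqref{ineq:integral}.

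The main technical obstacle is the algebraic factorization $H^{2p-2}=(H^{2p}/u^2)^{(p-1)/p}\cdot u^{2(p-1)/p}$: this is the only way to convert the ``wrong'' power $H^{2p-2}$ arising after Cauchy--Schwarz into a combination of $H^{2p}/u^2$ (absorbed on the left) plus the clean $u$-power that combines with $\beta(u)$ to produce exactly $u^{g(\alpha_0)}$ on the right, and it explains the precise definition of $g(\alpha_0)$. A secondary subtlety is that $f$ need not have a fixed sign in the setting of Theorem \ref{cor:subcritical}; this is handled by passing to $|f|$ in the AM--GM steps, which is legitimate because only upper bounds on $(\beta f)'(u) H^p$ and on the boundary term are needed.
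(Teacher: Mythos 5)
Your proposal is correct and follows essentially the same route as the paper: test the identity \eqref{id} with $\eta^\gamma$, use Lemma \ref{lem:positivity} and Young's inequality to obtain the weighted bound on $\int\beta(u)H^{2p}(\nabla u)u^{-2}\eta^\gamma$ (the paper's \eqref{ineq:subcritical-1}, your $(\star)$), then test \eqref{weak-sol} with $\beta(u)f(u)\eta^\gamma$ and absorb, exactly as in Lemma \ref{lem:subcritical-2}. The only difference is cosmetic: you perform the Young step in two stages (Cauchy--Schwarz followed by the factorization $H^{2p-2}=(H^{2p}/u^2)^{\frac{p-1}{p}}u^{\frac{2(p-1)}{p}}$), whereas the paper uses a single three-exponent Young inequality, yielding the same right-hand side $u^{g(\alpha_0)}\eta^{\gamma-2p}|\nabla\eta|^{2p}$.
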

\begin{proof}
    By the regularity of $u$, both sides in \eqref{id} are locally integrable. Hence by testing $\eta^\gamma$ in \eqref{id} and applying the condition $f'(u)\leqslant\alpha_0\frac{f(u)}{u}$, we derive
    $$\int\beta(u)\left[E_{ij}E_{ji}+A\frac{H^{2p}(\nabla u)}{u^2}\right]\eta^\gamma\leqslant-\gamma\int\beta(u)H^{p-1}(\nabla u) \eta^{\gamma-1} E_{ij}\nabla_{\xi_j}H(\nabla u)\eta_i.$$
    By using Lemma \ref{lem:positivity} and Young's inequality, we can deduce
    \begin{align*}
        &-\gamma \int \beta(u)  H^{p-1}(\nabla u) \eta^{\gamma-1} E_{ij}\nabla_{\xi_j}H(\nabla u )  \eta_i\\
        \leqslant~&\frac 1 2\int \beta(u)E_{ij}E_{ji}\eta^\gamma +\frac A 2\int \beta(u) \frac{H^{2p}(\nabla u)}{u^2} \eta^\gamma+C  \int \beta(u) u^{2p-2} \eta^{\gamma-2p}|\nabla \eta|^{2p}.
    \end{align*}
    Hence
    \begin{equation}\label{ineq:subcritical-1}
        \int \beta(u)\left[E_{ij}E_{ji}+\frac{H^{2p}(\nabla u)}{u^2} \right]\eta^\gamma \leqslant  C \int \beta(u)  u^{2p-2} \eta^{\gamma-2p}|\nabla \eta|^{2p}.
    \end{equation}

    Taking $\varphi=\beta(u)f(u)\eta^\gamma$ in \eqref{weak-sol}, we obtain
    \begin{align*}
        \int\beta(u)f^2(u)\eta^\gamma 
        =~&\int\beta(u)f'(u)H^p(\nabla u)\eta^\gamma+\int\beta'(u)H^p(\nabla u)f(u)\eta^\gamma \\
        &+\gamma\int\beta(u)f(u)H^{p-1}(\nabla u)\nabla_{\xi_i}H(\nabla u)\eta^{\gamma-1}\eta_i.
    \end{align*}
    By combining $f'(u)\leqslant\alpha_0\frac{f(u)}{u}$, $|\beta'(u)|\leqslant C\frac{\beta(u)}{u}$, and Young's inequality, we conclude 
    \begin{align*}
        \int \beta(u) f^2(u)\eta^\gamma 
        \leqslant ~& C\int\frac{\beta(u)}{u}|f(u)|H^p(\nabla u) \eta^\gamma+C\int\beta(u)|f(u)|H^{p-1}(\nabla u) \eta^{\gamma-1}|\nabla\eta| \\
        \leqslant ~& C\int \beta(u)\frac{H^{2p}(\nabla u)}{u^2} \eta^\gamma +\frac{1}{2}\int \beta(u)f^2(u)\eta^\gamma +C \int \beta(u) u^{2p-2} \eta^{\gamma-2p}|\nabla \eta|^{2p}. 
    \end{align*}
    By substituting \eqref{ineq:subcritical-1} and $\beta(u)=u^{-\frac{p_*-2}{p^*-1}\alpha_0}$ into this inequality, we obtain \eqref{ineq:integral}.
\end{proof}

We use the integral inequality \eqref{ineq:integral} to prove Theorem \ref{thm:subcritical}.
\begin{proof}[Proof of Theorem \ref{thm:subcritical}]
    Define $\mathcal{X}_1=\{u\leqslant M\}$ and $\mathcal{X}_2=\{u> M\}$. By the assumptions of $f$, $f(u)\geqslant\frac{f(M)}{M^{\alpha_0}}u^{\alpha_0}$ on $\mathcal{X}_1$, and $f(u)\geqslant cu^{\beta_0}$ in $\mathcal{X}_2$. Thus \eqref{ineq:integral} becomes
    \begin{equation}\label{ineq:subcritical-2}
        \int_{\mathcal{X}_1} u^{\left(2-\frac{p_*-2}{p^*-1}\right)\alpha_0} \eta^\gamma+\int_{\mathcal{X}_2} u^{2\beta_0 -\frac{p_*-2}{p^*-1}\alpha_0}\eta^\gamma \leqslant C\int u^{g(\alpha_0)} \eta^{\gamma-2p} |\nabla \eta|^{2p}.
    \end{equation}

    In the following, we divide into two cases. It is easy to check that $g(p-1)=\frac{(p-1)p(n+1)}{np-n+p}>0$.

    \noindent \textbf{Case 1.} When $g(\alpha_0)> 0$, we apply Young's inequality and take $\gamma=\left(2\beta_0-\frac{p_*-2}{p^*-1}\alpha_0\right)\frac{p}{\beta_0-p+1}$,  so that the right-hand side of \eqref{ineq:subcritical-2} becomes
    \begin{align*}
        C\int u^{g(\alpha_0)} \eta^{\gamma-2p} |\nabla \eta|^{2p}
        \leqslant~& \frac{1}{2}\int_{\mathcal{X}_1} u^{\left(2-\frac{p_*-2}{p^*-1}\right)\alpha_0}\eta^\gamma+C R^{n-\left(2-\frac{p_*-2}{p^*-1}\right)\frac{p\alpha_0}{\alpha_0-p+1}} \\
        &+ \frac{1}{2} \int_{\mathcal{X}_2} u^{2\beta_0 -\frac{p_*-2}{p^*-1}\alpha_0}\eta^\gamma+CR^{n-\left(2\beta_0-\frac{p_*-2}{p^*-1}\alpha_0\right)\frac{p}{\beta_0-p+1}}.
    \end{align*}
    Substituting it into \eqref{ineq:subcritical-2} and using $n-\left(2\beta_0-\frac{p_*-2}{p^*-1}\alpha_0\right)\frac{p}{\beta_0-p+1}\leqslant n-\left(2-\frac{p_*-2}{p^*-1}\right)\frac{p\alpha_0}{\alpha_0-p+1}$, 
    we obtain
    $$\int_{\mathcal{X}_1} u^{\left(2-\frac{p_*-2}{p^*-1}\right)\alpha_0} \eta^\gamma+\int_{\mathcal{X}_2} u^{2\beta_0 -\frac{p_*-2}{p^*-1}\alpha_0}\eta^\gamma\leqslant CR^{n-\left(2-\frac{p_*-2}{p^*-1}\right)\frac{p\alpha_0}{\alpha_0-p+1}},$$
    where
    \begin{align*}
        &n-\left(2-\frac{p_*-2}{p^*-1}\right)\frac{p\alpha_0}{\alpha_0-p+1}=n-\frac{(n+1)p^2\alpha_0}{(n-p)(p^*-1)(\alpha_0-p+1)}\\
        <~&n-\frac{np^2\alpha_0}{(n-p)(p^*-1)(\alpha_0-p+1)}=\frac{n(p-1)(\alpha_0-p^*+1)}{(p^*-1)(\alpha_0-p+1)}<0.
    \end{align*}

    \noindent \textbf{Case 2.} When $g(\alpha_0) \leqslant 0$, since $g$ is linear in $\alpha_0$, we conclude that $g(p^*-1)<0$. It follows that $p_*>2p$, that is $p>\frac{n+1}{2}$. By using Lemma \ref{lem:asymptotic} and taking $\gamma = 2p$, we can reduce \eqref{ineq:integral} to
    $$\int u^{-\frac{p_*-2}{p^*-1}\alpha_0}f^2(u)\eta^\gamma \leqslant CR^{n-2p-\frac{n-p}{p-1}g(\alpha_0)}<CR^{n-2p-\frac{n-p}{p-1}g(p^*-1)}=CR^{-\frac{n-p}{p-1}}.$$
    
     In both cases, the conclusion follows by letting $R\to\infty$.
\end{proof}

When $f$ changes sign, the asymptotic behavior cannot be applied to Case 2. However, the argument for $\mathcal{X}_1$ in Case 1 remains valid and suffices to prove Theorem \ref{cor:subcritical}.

\begin{proof}[Proof of Theorem \ref{cor:subcritical}]
    Set $N=\sup |u|<\infty$, then $f(u)\geqslant \frac{f(N)}{N^{\alpha_0}}u^{\alpha_0}$ in $\mathbb R^n=\{u\leqslant N\}$.
    
    When $g(\alpha_0)>0$, by following the argument in Case 1 in the proof of Theorem \ref{thm:subcritical}, we derive 
    $$\int u^{\left(2-\frac{p_*-2}{p^*-1}\right)\alpha_0}\eta^\gamma\leqslant \frac{1}{2}\int u^{(2-\frac{p_*-2}{p^*-1})\alpha_0}\eta^\gamma+C R^{\frac{n(p-1)(\alpha_0-p^*+1)}{(p^*-1)(\alpha_0-p+1)}},$$
    where $\gamma=\left(2-\frac{p_*-2}{p^*-1}\right)\frac{p\alpha_0}{\alpha_0-p+1}$.

    When $g(\alpha_0)=0$, we have $p>\frac{n+1}{2}$. By taking $\gamma=2p$ in \eqref{ineq:integral}, we obtain
    $$\int u^{-\frac{p_*-2}{p^*-1}\alpha_0}f^2(u)\eta^\gamma\leqslant CR^{n-2p}.$$
    In both cases, the conclusion follows by taking $R \to \infty$.
\end{proof}

\subsection{Critical case: proof of Theorem \ref{thm:critical}}\label{subsec:critical}

In this subsection, we use the key differential identity \eqref{id}, where $\alpha_0=p^*-1$, to prove Theorem \ref{thm:critical}.

\begin{proof}[Proof of Theorem \ref{thm:critical}]
    The proof proceeds in four steps.

    \textbf{Step 1. Regularity.} By Lemma \ref{lem:regularity}, $u\in C_{\mathrm{loc}}^{1,q}(\mathbb R^n)$ and $a(\nabla u)\in H^1_{\operatorname{loc}}(\mathbb R^n)$, thus we do not need to use a regularization argument.

    \textbf{Step 2. Asymptotic behavior.} By Proposition 2.3 in \cite{Figalli2020}, there exist two positive constants $C_0$ and $C_1$ such that 
	\begin{equation}\label{ineq:asymptotic-critical}
        \frac{C_0}{1+|x|^{\frac{n-p}{p-1}}}\leqslant u(x)\leqslant \frac{C_1}{1+|x|^{\frac{n-p}{p-1}}}\quad\text{and}\quad|\nabla u(x)|\leqslant \frac{C_1}{1+|x|^{\frac{n-1}{p-1}}},\quad \forall x\in\Sigma.
    \end{equation}

    \textbf{Step 3. Differential identity.} Similar to the subcritical case (see \eqref{id}), we have the following differential identity:
    \begin{align}\label{id-critical}
        \partial_i[u^{2-p_*}E_{ij}a_j(\nabla u)]=u^{2-p_*}E_{ij}E_{ji},
    \end{align}
    where $E_{ij}=\partial_{j}(a_i(\nabla u))-(p_{*}-1)\frac{a_i(\nabla u)u_{j}}{u}-\frac{1}{n}\left(\Delta^H_{p}u-(p_{*}-1)\frac{H^p(\nabla u)}{u}\right)\delta_{ij}$.

    By testing $\eta^2$ in \eqref{id-critical} and applying Young's inequality, we obtain
    \begin{equation*}
        \int u^{2-p_{*}}E_{ij}E_{ji}\eta^2\leqslant C\int u^{2-p_{*}}H^{2p-2}(\nabla u)|\nabla\eta|^{2}\leqslant CR^{-\frac{n-p}{p-1}}.
    \end{equation*}
    Here, we use Lemma \ref{lem:positivity} and the asymptotic behavior \eqref{ineq:asymptotic-critical}. Therefore $E_{ij}\equiv 0$, $\forall 1\leqslant i,j \leqslant n$.

    \textbf{Step 4. Classification: $u\equiv u_{\lambda,x_0}^H $.} It follows form $E_{ij}\equiv 0$ that $\partial_{j}[u^{1-p_{*}}H^{p-1}(\nabla u)\nabla_{\xi_{j}}H(\nabla u)]=0$ and 
    $$\partial_i[u^{1-p_{*}}H^{p-1}(\nabla u)\nabla_{\xi_i}H(\nabla u)]=\partial_j[u^{1-p_{*}}H^{p-1}(\nabla u)\nabla_{\xi_j}H(\nabla u)]$$
    for any $i\ne j$. 
    Then there exist $x_{0}\in\mathbb R^{n}$ and a constant $a_0\ne 0$ such that
    \begin{equation}\label{ineq:critical-1}
        u^{1-p_{*}}H^{p-1}(\nabla u)\nabla_{\xi}H(\nabla u)=a_0(x-x_{0}).
    \end{equation}
    By applying $H_{0}$ and $\nabla H_{0}$ respectively to both sides of \eqref{ineq:critical-1}, we obtain
    $$u^{1-p_{*}}H^{p-1}(\nabla u)=a_0 H_{0}(x-x_{0}),~\frac{\nabla u}{H(\nabla u)}=\nabla H_{0}(x-x_{0}).$$
    By eliminating $ H(\nabla u)$, we obtain the explicit solution $u\equiv u_{\lambda,x_0}^H$.
\end{proof}

\subsection{Application: singularity and decay estimates via Liouville theorems}\label{subsec:application}

In this subsection, we prove Theorem \ref{thm:asymptotic-1}. Although the proof is similar to that of Theorem 3.1 in \cite{Souplet2007}, we provide full details here for the sake of completeness.

\begin{proof}[Proof of Theorem \ref{thm:asymptotic-1}]
    Let $\delta=\frac{p}{\alpha_0-p+1}$. If estimate \eqref{eq:asymptotic-estimate 1} fails, then there exist $\Omega_k$, $u_k$, and $y_k \in \Omega_k$ such that $-\Delta_p^H u_k=f(u_k)$ in $\Omega_k$ and
    \begin{equation}\label{ineq:subcritical-3}
        M_k(y_k):= u(y_k)^{\frac{1}{\delta}}+|\nabla u_k(y_k)|^{\frac{1}{\delta+1}}>2k\left(1+d^{-1}(y_k)\right)>2k d^{-1}(y_k).
    \end{equation}
    By the doubling lemma (Lemma 5.1 and Remark 5.2 (b) in \cite{Souplet2007}), there exist points $x_k\in\Omega_k$ such that
    \begin{equation}\label{ineq:subcritical-4}
        M_k(x_k)\geqslant M_k(y_k),\quad M_k(x_k)>2k d^{-1}(x_k),\quad\text{and }M_k(z)\leqslant 2M_k(x_k)\text{ for } |z-x_k|\leqslant k M_k^{-1}(x_k).
    \end{equation}
    Define $\lambda_k=M_k^{-1}(x_k)$ and $v_k(y)=\lambda_k^{\delta}u_k(x_k+\lambda_k y)$ for $y\in \overline{B_k}$. By \eqref{ineq:subcritical-3}, we have $\lambda_k \to 0$. Moreover
    \begin{equation}\label{ineq:subcritical-5}
        v_k(0)^{\frac{1}{\delta}}+|\nabla v(0)|^{\frac{1}{\delta+1}}=1 \quad \text{and} \quad v_k(y)^{\frac{1}{\delta}}+|\nabla v(y)|^{\frac{1}{\delta+1}}\leqslant 2,\quad\forall y\in\overline{B_k}.
    \end{equation}
    
    It is easy to check that each $v_k$ satisfies
    \begin{equation}\label{ineq:subcritical-6}
        -\Delta_p^H v_k(y)=f_k(v_k(y)):=\lambda_k^{(\delta+1)(p-1)+1}f(\lambda_k^{-\delta}v_k(y)),\quad\forall y\in\overline{B_k},
    \end{equation}
    From the condition $-C\leqslant f(s)\leqslant C(1+s^{\alpha_0})$, we derive uniform bounds
    $$
    -C\lambda_k^{(\delta+1)(p-1)+1}\leqslant f_k(v_k(y))\leqslant C', \quad\forall y\in\overline{B_k}, \quad\forall k\geqslant 1.
    $$
    By Lemma \ref{lem:regularity}, $v_k \in C^{1,q}_{\mathrm{loc}}(B_k)$ for some $q\in (0,1)$. Thus, a subsequence $v_{k_j}$ converges in $C^{1,q'}_{\mathrm{loc}}(\mathbb R^n)$ to a function $v$ for some $q'\in (0,1)$, where $v \geqslant 0$, $-\Delta_p^H v \geqslant 0$, and $v(0)^{\frac{1}{\delta}}+|\nabla v(0)|^{\frac{1}{\delta+1}}=1$. The strong maximum principle (Lemma \ref{lem:strong}) gives $v>0$ in $\mathbb R^n$. Combining \eqref{eq:asymptotic-f} and \eqref{ineq:subcritical-6} yields $-\Delta_p^H v=\tau v^\alpha_0$ in $\mathbb R^n$, which contradicts Theorem \ref{thm:subcritical} and Corollary \ref{cor:p>=n}, so \eqref{eq:asymptotic-estimate 1} holds.
\end{proof}

A similar argument yields the following corollary.
\begin{corollary}\label{cor:asymptotic-2}
    Let $0<p-1<\alpha_0<p^*-1$, and let $\Omega\subsetneq\mathbb R^n$ be a domain. Then there exists a positive constant $C=C(n,p,\alpha_0)$ such that any nonnegative solution $u$ to $-\Delta_p^H u=u^{\alpha_0}$ in $\Omega$, the following holds:
    \begin{equation}\label{ineq:subcritical-7}
        u(x)^{\frac{\alpha_0-p+1}{p}}+|\nabla u(x)|^{\frac{\alpha_0-p+1}{\alpha_0+1}}\leqslant Cd^{-1}(x),\quad\forall x\in\Omega,
    \end{equation}
    In particular, if $\Omega$ is an exterior domain, i.e., $\{|x|>R_0\}\subset \Omega$ for some $R_0>0$, then
    \begin{equation*}
        u(x)^{\frac{\alpha_0-p+1}{p}}+|\nabla u(x)|^{\frac{\alpha_0-p+1}{\alpha_0+1}}\leqslant C|x|^{-1},\quad\forall x\in B_{2R_0}^c.
    \end{equation*}
\end{corollary}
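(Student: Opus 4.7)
The plan is to prove Corollary~\ref{cor:asymptotic-2} by repeating the rescaling-plus-doubling scheme of Theorem~\ref{thm:asymptotic-1}, with the crucial improvement that the pure power $f(u)=u^{\alpha_0}$ is exactly scale-invariant, which is what allows the additive ``$1$'' in Theorem~\ref{thm:asymptotic-1} to be absorbed into $d^{-1}(x)$. I argue by contradiction. If \eqref{ineq:subcritical-7} fails, then for every $k$ there exist a domain $\Omega_k\subsetneq\mathbb R^n$, a nonnegative solution $u_k$ of $-\Delta_p^H u_k=u_k^{\alpha_0}$ in $\Omega_k$, and a point $y_k\in\Omega_k$ such that
$$
M_k(y_k):=u_k(y_k)^{\frac{1}{\delta}}+|\nabla u_k(y_k)|^{\frac{1}{\delta+1}}>2k\,d_k^{-1}(y_k),\qquad \delta:=\frac{p}{\alpha_0-p+1},
$$
where $d_k(\cdot)=\operatorname{dist}(\cdot,\partial\Omega_k)<\infty$ thanks to $\Omega_k\neq\mathbb R^n$. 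Applying the doubling lemma (Lemma 5.1 and Remark 5.2 (b) in \cite{Souplet2007}) to $M_k$ on $\Omega_k$ produces $x_k\in\Omega_k$ with $M_k(x_k)\geqslant M_k(y_k)$, $M_k(x_k)d_k(x_k)>2k$, and $M_k(z)\leqslant 2M_k(x_k)$ whenever $|z-x_k|\leqslant k/M_k(x_k)$.

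Setting $\lambda_k=1/M_k(x_k)$ and rescaling $v_k(y)=\lambda_k^{\delta}u_k(x_k+\lambda_k y)$ on $\overline{B_k}$ (which lies in the rescaled $\Omega_k$ since $k\lambda_k<d_k(x_k)/2$), the exponent $\delta$ is chosen precisely so that $(\delta+1)(p-1)+1=\alpha_0\delta$; hence by the $(p-1)$-homogeneity of $a(\xi)$ the rescaled functions satisfy exactly
$$
-\Delta_p^H v_k=v_k^{\alpha_0}\quad\text{in }B_k,
$$
with no residual power of $\lambda_k$ (unlike in Theorem~\ref{thm:asymptotic-1}). The normalization gives $v_k(0)^{1/\delta}+|\nabla v_k(0)|^{1/(\delta+1)}=1$ and the doubling estimate yields $v_k^{1/\delta}+|\nabla v_k|^{1/(\delta+1)}\leqslant 2$ throughout $\overline{B_k}$, so both $v_k$ and $v_k^{\alpha_0}$ are uniformly bounded. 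Lemma~\ref{lem:regularity} then gives uniform $C^{1,q}_{\mathrm{loc}}$ bounds, and a diagonal subsequence converges in $C^{1,q'}_{\mathrm{loc}}(\mathbb R^n)$ to a nonnegative $v$ solving $-\Delta_p^H v=v^{\alpha_0}$ on all of $\mathbb R^n$.

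It remains to rule out this limit. Since $v\geqslant 0$ and $v\in C^1$, $v(0)=0$ would force $\nabla v(0)=0$, contradicting $v(0)^{1/\delta}+|\nabla v(0)|^{1/(\delta+1)}=1$; hence $v(0)>0$, and Lemma~\ref{lem:strong} gives $v>0$ on $\mathbb R^n$. For $1<p<n$, Theorem~\ref{thm:subcritical} (applied with $\beta_0=\alpha_0\in(p-1,p^*-1)$ and $c=1$) rules this out; for $p\geqslant n$, Corollary~\ref{cor:p>=n} forces $v$ to be constant, which combined with the equation gives $v\equiv 0$, contradicting $v(0)>0$. This proves \eqref{ineq:subcritical-7}. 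The exterior-domain bound then follows immediately, since $\{|x|>R_0\}\subset\Omega$ implies $d(x)\geqslant|x|-R_0\geqslant|x|/2$ for $|x|\geqslant 2R_0$, whence $d^{-1}(x)\leqslant 2|x|^{-1}$. The only delicate point is verifying the exact scale-invariance of the rescaled equation, which is however automatic from the algebraic identity built into the choice of $\delta$.
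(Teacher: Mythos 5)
Your proposal is correct and follows essentially the same route as the paper: repeat the doubling-lemma rescaling argument of Theorem \ref{thm:asymptotic-1}, use the exact scale invariance of $f(u)=u^{\alpha_0}$ so the limit solves $-\Delta_p^H v=v^{\alpha_0}$ in $\mathbb R^n$, and contradict Theorem \ref{thm:subcritical} (for $p<n$) or Corollary \ref{cor:p>=n} (for $p\geqslant n$). Your added details (the identity $(\delta+1)(p-1)+1=\alpha_0\delta$, the argument that $v(0)>0$ before invoking Lemma \ref{lem:strong}, and the exterior-domain deduction $d(x)\geqslant |x|-R_0$) are accurate refinements of what the paper leaves implicit.
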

\begin{proof}
    We adopt the notation from the proof of Theorem \ref{thm:asymptotic-1}. If estimate \eqref{ineq:subcritical-7} fails, then there exist $\Omega_k$, $u_k$, and $y_k\in \Omega_k$ such that $-\Delta_p^H u_k=u_k^{\alpha_0}$ in $\Omega_k$ and $M_k(y_k)>2k d^{-1}(y_k)$.
    By the doubling lemma, we can deduce \eqref{ineq:subcritical-4} and \eqref{ineq:subcritical-5}. Moreover, $v_k$ satisfies $-\Delta_p^H v_k =v_k^{\alpha_0}$ on $\overline{B_k}$. Similarly, we have $v_k \in C^{1,q}_{\mathrm{loc}}(B_k)$ for some $q\in (0,1)$. Thus, there exists a subsequence $v_{k_j}$ that converges in $C^{1,q'}_{\mathrm{loc}}(\mathbb R^n)$ to a positive function $v$ satisfying $-\Delta_p^H v=v^{\alpha_0}$ in $\mathbb R^n$, which contradicts Theorem \ref{thm:subcritical} and Corollary \ref{cor:p>=n}, so \eqref{ineq:subcritical-7} holds.
\end{proof}
\section{Supercritical case: Liouville theorem for stable solutions}\label{sec:stable}

In this section, we study the stable solutions and solutions that are stable outside a compact set to the anisotropic $p$-Laplace equation \eqref{eq:anisotropic}, and prove that these solutions must be trivial under certain assumptions on $n$, $p$, and $f$. Assume $u$ is a solution to \eqref{eq:anisotropic}. By Lemma \ref{lem:regularity}, $u\in C^{1,q}_\mathrm{loc}(\mathbb{R}^n)$ and $a(\nabla u)\in H^1_{\mathrm{loc}}(\mathbb{R}^n)$ for some $q\in (0,1)$.

We first prove Theorem \ref{thm:stable}.
\begin{proof}[Proof of Theorem \ref{thm:stable}]
    Taking $\varphi=|u|^{\delta-1} u \eta^{\gamma}$ in \eqref{weak-sol}, where $\delta \geqslant 1$ and $\gamma > 0$ are to be determined, we obtain
    \begin{equation}\label{ineq:stable-1}
        \int |u|^{\delta-1}uf(u)\eta^\gamma =\delta \int |u|^{\delta-1}H^p(\nabla u) \eta^\gamma+\frac \gamma p\int|u|^{\delta-1}u\nabla_{\xi} H^p(\nabla u)\cdot\eta^{\gamma-1}\nabla\eta.
    \end{equation}
    By using Young's inequality, we can deduce
    \begin{equation}\label{ineq:stable-2}
        \frac \gamma p\left|\int|u|^{\delta-1}u\nabla_{\xi} H^p(\nabla u)\cdot\eta^{\gamma-1}\nabla\eta\right|\leqslant \varepsilon \int |u|^{\delta-1}H^p(\nabla u) \eta^\gamma +C \int |u|^{\delta+p-1}\eta^{\gamma-p} |\nabla \eta|^{p}.
    \end{equation}
    By substituting \eqref{ineq:stable-2} into \eqref{ineq:stable-1} and using $u^2f'(u)\geqslant\alpha_0uf(u)$, we conclude
    \begin{equation}\label{ineq:stable-3}
        \alpha_0(\delta-\varepsilon)\int|u|^{\delta-1}H^p(\nabla u)\eta^\gamma\leqslant\int|u|^{\delta+1}f'(u)\eta^\gamma+ C \int |u|^{\delta+p-1}\eta^{\gamma-p} |\nabla \eta|^{p}.
    \end{equation}
    
    Taking $\varphi=|u|^{\frac{\delta-1}{2}}u\eta^{\frac{\gamma}{2}}$ in $\left<\mathcal{L}_u\varphi,\varphi\right>_{L^2}\geqslant 0$ and applying Young's inequality, we obtain
    \begin{align*}
        &\int |u|^{\delta+1}f'(u)\eta^\gamma \leqslant \int \frac{1}{p} \nabla_{\xi_i \xi_j}^2 H^p(\nabla u) \partial_i \left(|u|^{\frac{\delta-1}{2}}u\eta^{\frac{\gamma}{2}}\right) \partial_j \left(|u|^{\frac{\delta-1}{2}}u\eta^{\frac{\gamma}{2}}\right)\\
        \leqslant ~&\left(\frac{(\delta+1)^2}{4}+\varepsilon\right) \int|u|^{\delta-1} \frac{1}{p} \nabla_{\xi_i \xi_j}^2H^p(\nabla u)u_iu_j\eta^\gamma +C\int |u|^{\delta+1}\eta^{\gamma-2} |\nabla \eta|^2 H^{p-2}(\nabla u)\\
        \leqslant ~&\left(\frac{(\delta+1)^2}{4}(p-1)+p\varepsilon\right)\int |u|^{\delta-1}H^p(\nabla u)\eta^\gamma +C\int |u|^{\delta+p-1}\eta^{\gamma-p} |\nabla \eta|^p.
    \end{align*}
    Observe that the case $p = 2$ is special, as the second inequality holds directly. By substituting this into \eqref{ineq:stable-3}, we obtain
    \begin{align*}
        \left(\alpha_0\delta-\frac{(\delta+1)^2}{4}(p-1)-(\alpha_0+p)\varepsilon\right)\int |u|^{\delta-1}H^p(\nabla u)\eta^\gamma \leqslant C\int |u|^{\delta+p-1}\eta^{\gamma-p} |\nabla \eta|^p.
    \end{align*}

    Let $\delta_0=\frac{2\alpha_0-(p-1)+2\sqrt{\alpha_0[\alpha_0-(p-1)]}}{p-1}$. Since $\alpha_0>p-1$, $\delta_0>1$. For any $\delta \in[1, \delta_0)$, it follows that $\alpha_0\delta>\frac{(\delta+1)^2}{4}(p-1)$. Consequently, when $\varepsilon$ is sufficiently small, by combining the above inequality with \eqref{ineq:stable-1} and \eqref{ineq:stable-2}, it follows that
    \begin{align}\label{ineq:stable-4}
        \int |u|^{\delta-1}uf(u)\eta^{\gamma} +\int |u|^{\delta-1}H^p(\nabla u)\eta^\gamma \leqslant C\int |u|^{\delta+p-1}\eta^{\gamma-p} |\nabla \eta|^p.
    \end{align}

    Define $\mathcal{X}_1=\{u\leqslant M\}$, $\mathcal{X}_2=\{u>M\}$. By the conditions of $f$, $uf(u)\geqslant C|u|^{\beta_0+1}$ on $\mathcal{X}_1$ and $uf(u)\geqslant C|u|^{\alpha_0+1}$ in $\mathcal{X}_2$. By taking $\gamma = \frac{(\alpha_0+\delta)p}{\alpha_0-p+1}$ in \eqref{ineq:stable-4}, we obtain
    \begin{align*}
        &\int_{\mathcal{X}_1}|u|^{\beta_0+\delta}\eta^\gamma+\int_{\mathcal{X}_2}|u|^{\alpha_0+\delta}\eta^\gamma \leqslant C \int |u|^{\delta+p-1}\eta^{\gamma-p} |\nabla \eta|^p\\
        \leqslant~&\varepsilon \int_{\mathcal{X}_1} |u|^{\beta_0+\delta}\eta^\gamma+C \int_{\mathcal{X}_1} \eta^{\gamma-\frac{(\beta_0+\delta)p}{\beta_0-p+1}}|\nabla \eta|^{\frac{(\beta_0+\delta)p}{\beta_0-p+1}}\\
        &+\varepsilon \int_{\mathcal{X}_2} |u|^{\alpha_0+\delta}\eta^\gamma+C\int_{\mathcal{X}_2}\eta^{\gamma-\frac{(\alpha_0+\delta)p}{\alpha_0-p+1}}|\nabla \eta|^{\frac{(\alpha_0+\delta)p}{\alpha_0-p+1}}\leqslant R^{n-\frac{(\beta_0+\delta)p}{\beta_0-p+1}}.
    \end{align*}
    In the final inequality, we use the fact that $n-\frac{(\alpha_0+\delta)p}{\alpha_0-p+1}\leqslant n-\frac{(\beta_0+\delta)p}{\beta_0-p+1}$. Under conditions (1) and (2) of the theorem, we have $n-\frac{(\beta_0+\delta_0)p}{\beta_0-p+1}<0$. Therefore, we choose $\delta\in[1,\delta_0)$ such that $n-\frac{(\beta_0+\delta)p}{\beta_0-p+1}<0$. The conclusion follows by letting $R\to\infty$.
\end{proof}

In what follows, we apply Pohozaev identity (Theorem 1.3 in \cite{Montoro2023-Pohozaev}) to prove Theorem \ref{thm:stable-outside}.

\begin{proof}[Proof of Theorem \ref{thm:stable-outside}]
    Let $R_0>0$ be such that $\mathcal{K}\subset B_{R_0}$. The proof proceeds in three steps.
    
    \textbf{Step 1.} We prove that
    \begin{equation}\label{eq:integrability-stable}
        \int_0^u f(s)\mathrm{d}s\in L^1(\mathbb{R}^n),\quad  uf(u)\in L^1(\mathbb{R}^n),\quad\text{and} \quad H^p(\nabla u)\in L^1(\mathbb{R}^n).
    \end{equation}

    Since $u$ is a stable solution to \eqref{eq:anisotropic} on  $B_{R_0}^c$, similar to \eqref{ineq:stable-4} with $\delta=1$, the following inequality holds:
    \begin{equation}\label{ineq:stable-5}
        \int uf(u)\tilde\eta^{\gamma} +\int H^p(\nabla u) \tilde\eta^\gamma \leqslant C\int |u|^{p}\tilde\eta^{\gamma-p}|\nabla\tilde\eta|^p,
    \end{equation}
    for any cut-off function $\tilde\eta\in C_c^1(B_{R_0}^c)$ with $0\leqslant\tilde\eta\leqslant 1$, 

    To deal with $|u|^{p}$, we define $\mathcal{X}_1=\{|u|\leqslant M\}$ and $\mathcal{X}_2=\{|u|>M\}$. It follows from \eqref{eq:f-outside-p>=n} that
    \begin{equation}\label{ineq:stable-6}
        |u|^{p}\leqslant C
        \begin{cases}
            [uf(u)]^{\frac{p}{\beta_0+1}},&\text{on }\mathcal{X}_1,\\
            [uf(u)]^{\frac{p}{\alpha_0+1}},&\text{in }\mathcal{X}_2.
        \end{cases}
    \end{equation}
    By taking $\gamma=\frac{(\alpha_0+1)p}{\alpha_0-p+1}$ and combining inequalities \eqref{ineq:stable-5} and \eqref{ineq:stable-6}, we obtain
    \begin{align}
        \begin{split}\label{ineq:stable-7}
            &\int uf(u)\tilde\eta^{\gamma}+\int H^p(\nabla u) \tilde\eta^\gamma\\
            \leqslant~&C\int_{\mathcal{X}_1}[uf(u)]^{\frac{p}{\beta_0+1}}\tilde\eta^{\gamma-p} |\nabla\tilde\eta|^{p}+C\int_{\mathcal{X}_2}[uf(u)]^{\frac{p}{\alpha_0+1}} \tilde\eta^{\gamma-p} |\nabla\tilde\eta|^{p}\\
            \leqslant~&\frac{1}{2}\int_{\mathcal{X}_1} uf(u)\tilde\eta^{\gamma}+C\int_{\mathcal{X}_1} \tilde\eta^{\gamma-\frac{(\beta_0+1)p}{\beta_0-p+1}}|\nabla \tilde\eta|^{\frac{(\beta_0+1)p}{\beta_0-p+1}}+\frac{1}{2}\int_{\mathcal{X}_2}uf(u)\tilde\eta^{\gamma} +C\int_{\mathcal{X}_2} \tilde\eta^{\gamma-\frac{(\alpha_0+1)p}{\alpha_0-p+1}}|\nabla \tilde\eta|^{\frac{(\alpha_0+1)p}{\alpha_0-p+1}}\\
            \leqslant~&C\int|\nabla \tilde\eta|^{\frac{(\beta_0+1)p}{\beta_0-p+1}}.
        \end{split}
    \end{align}
    Here, we use the fact that $\frac{\beta_0+1}{\beta_0-p+1}\leqslant \frac{\alpha_0+1}{\alpha_0-p+1}$.

    For any $r>R_0+3$, we define a cutoff function $\tilde\eta_r\in C_c^{\infty}(B_{2r} \backslash B_{R_0+1})$ such that
    $$\tilde\eta_r=\begin{cases}
        0,&\text{if }x\in B_{R_0+1},\\
        1,&\text{if }x\in B_r\backslash B_{R_0+2},\\
        0,&\text{if }x\in B_{2r}^c.
    \end{cases}$$
    By substituting $\tilde\eta=\tilde\eta_r$ into \eqref{ineq:stable-7}, we deduce
    $$\int_{B_r\backslash B_{R_0+2}} \left[\int_0^u f(s)\mathrm{d}s+ H^p(\nabla u)\right] \leqslant \int_{B_r\backslash B_{R_0+2}}\left[f(u) u + H^p(\nabla u)\right] \leqslant C\left(1+r^{n-\frac{(\beta_0+1)p}{\beta_0-p+1}}\right) \leqslant C.$$
    As $r\to\infty$, \eqref{eq:integrability-stable} holds. 
    Moreover, $u\in L^{\beta_0+1}(\mathcal{X}_1)\cap L^{\alpha_0+1}(\mathcal{X}_2)$ by \eqref{ineq:stable-6}.

    \textbf{Step 2.}
    By applying Pohozaev identity in the anisotropic setting (see Theorem 1.3 in \cite{Montoro2023-Pohozaev}), we obtain 
    \begin{equation}\label{eq:Pohozaev }
        n\int_{\mathbb{R}^n}\mathrm{d}x\int_{0}^{u}f(s)\mathrm{d}s=\frac{n-p}{p}\int_{\mathbb{R}^n} H^p(\nabla u)\mathrm{d}x.
    \end{equation}
    When $p\geqslant n$, we have $u \equiv 0$. 

    \textbf{Step 3.} Now we consider the case $2\leqslant p<n$ and $p-1<\alpha_0\leqslant \beta_0<p^*-1$. It follows from \eqref{eq:f-outside-p<n} that
    $$\beta_0\int_0^u f(s)\geqslant\int_0^u sf'(s)=uf(u)-\int_0^u f(s),\quad\forall u\ne 0,$$
    that is
    \begin{equation}\label{ineq:stable-8}
        \int_{\mathbb{R}^n}\mathrm{d}x\int_0^u f(s)\mathrm{d}s\geqslant\frac{1}{\beta_0+1}\int_{\mathbb{R}^n}uf(u)\mathrm{d}x.
    \end{equation}

    By testing $u\eta$ in \eqref{eq:anisotropic}, we obtain
    $$\int uf(u)\eta=\int H^p(\nabla u)\eta +\int uH^{p-1}(\nabla u) \nabla_{\xi}H(\nabla u) \nabla\eta\triangleq \operatorname{I}+\operatorname{II}.$$
    From \eqref{ineq:stable-6} and the integrability conditions \eqref{eq:integrability-stable}, we derive
    \begin{align*}
        |\operatorname{II}|
        \leqslant~&\left(\int_{\mathcal{W}\cap \mathcal{X}_1}H^{p} \right)^{\frac{1}{p}} \left(\int_{\mathcal{W}\cap\mathcal{X}_1} |u|^{\beta_0+1}\right)^{\beta_0+1}\left(\int_{\mathcal{W}\cap \mathcal{X}_1}|\nabla\eta|^{\frac{(\beta_0+1)p}{\beta_0+1-p}} \right)^{\frac{\beta_0-p+1}{(\beta_0+1)p}} \\
        &+\left(\int_{\mathcal{W}\cap \mathcal{X}_2}H^{p} \right)^{\frac{1}{p}} \left(\int_{\mathcal{W}\cap\mathcal{X}_2}|u|^{\alpha_0+1}\right)^{\alpha_0+1}\left(\int_{\mathcal{W}\cap\mathcal{X}_2} |\nabla\eta|^{ \frac{(\alpha_0+1)p}{\alpha_0-p+1}} \right)^{\frac{\alpha_0-p+1}{(\alpha_0+1)p}}\to 0,\quad\text{as }R\to\infty,
    \end{align*}
    where $\mathcal{W}=B_{2R}\backslash B_R$. Hence
    \begin{equation}\label{ineq:stable-9}
        \int_{\mathbb{R}^n}uf(u)  =\int_{\mathbb{R}^n}H^p(\nabla u).
    \end{equation}
    By combining \eqref{eq:Pohozaev }, \eqref{ineq:stable-8} and \eqref{ineq:stable-9}, we conclude
    $$\frac{n-p}{np} \int_{\mathbb{R}^n} f(u)u\mathrm{d}x = \frac{n-p}{np}\int_{\mathbb{R}^n} H^p(\nabla u)\mathrm{d}x = \int_{\mathbb{R}^n}\mathrm{d}x\int_{0}^{u}f(s)\mathrm{d}s\geqslant \frac{1}{1+\beta_0} \int_{\mathbb{R}^n} uf(u)\mathrm{d}x.$$
    This contradicts $\beta_0<p^*-1$, thereby implying $u\equiv 0$.
\end{proof}

\textbf{Acknowledgments.} The authors would like to thank Professor Xi-Nan Ma for the constant encouragement in this article.

\printbibliography[heading=bibintoc, title=\ebibname]

\appendix

\end{document}